
\documentclass[11pt]{amsart}
\usepackage{fullpage}
\usepackage[titletoc]{appendix}


\usepackage{amssymb,amsmath,amsthm,amscd,mathrsfs,graphicx, color}
\usepackage[cmtip,all,matrix,arrow,tips,curve]{xy}
\usepackage{hyperref}


\numberwithin{equation}{section}
\newtheorem{teo}{Theorem}[section]
\newtheorem{pro}[teo]{Proposition}
\newtheorem{lem}[teo]{Lemma}
\newtheorem{cor}[teo]{Corollary}

\newtheorem{que}[teo]{Question}

\newtheorem{teoalpha}{Theorem}

\theoremstyle{definition}
\newtheorem{dfn}[teo]{Definition}
\newtheorem{exa}[teo]{Example}

\theoremstyle{remark}
\newtheorem{rem}[teo]{Remark}


\DeclareMathOperator{\coniveau}{N}
\def\etconiveau{\coniveau_\ell}
\def\hconiveau{\coniveau_H}

\def\cross{\times}

\def\cx{{\mathbb C}}
\def\ff{{\mathbb F}}

\def\rat{{\mathbb Q}}
\def\integ{{\mathbb Z}}
\def\idp{{\mathfrak p}}
\def\calo{{\mathcal O}}

\def\iso{\cong}

\renewcommand{\bar}[1]{{\overline{#1}}}
\newcommand{\ubar}[1]{{\underline{#1}}}

\DeclareMathOperator{\alb}{Alb}

\DeclareMathOperator{\End}{End}
\DeclareMathOperator{\Hom}{Hom}

\DeclareMathOperator{\gal}{Gal}

\DeclareMathOperator{\spec}{Spec}
\DeclareMathOperator{\pic}{Pic}
\DeclareMathOperator{\Ab}{Ab}
\DeclareMathOperator{\A}{A}

\DeclareMathOperator{\chow}{CH}
\DeclareMathOperator{\Chow}{Chow}

\def\ra{\rightarrow}
\def\tensor{\otimes}
\newcommand{\st}[1]{\left\{#1\right\}}

\newcommand{\LKtrace}[1]{\underline{\underline{#1}}}

\newenvironment{alphabetize}{\begin{enumerate}

}{\end{enumerate}}


\begin{document}


\title[Descending cohomology]{On descending cohomology geometrically}

\author{Jeffrey D. Achter}
\address{Colorado State University, Department of Mathematics,
Fort Collins, CO 80523,
  USA}
  \email{j.achter@colostate.edu}

\author{Sebastian Casalaina-Martin }
\address{University of Colorado, Department of Mathematics, 
Boulder, CO 80309, USA }
\email{casa@math.colorado.edu}

\author{Charles Vial}
\address{University of Cambridge, DPMMS,  Cambridge CB3 0WB,
UK}
\email{c.vial@dpmms.cam.ac.uk}

\thanks{The first author was partially supported by  grants from the
  Simons Foundation (204164) and the NSA (H98230-14-1-0161,
H98230-15-1-0247 and H98230-16-1-0046).  The second author
was partially supported by NSF grant DMS-1101333 and a Simons Foundation
Collaboration Grant for Mathematicians
(317572).  The third author was supported by the Fund for Mathematics at the
Institute for Advanced Study and by EPSRC Early Career Fellowship EP/K005545/1.}

\date{October 26, 2016}

\begin{abstract}
 In this paper, motivated by a problem posed by Barry Mazur,   we show that for  smooth projective  varieties over the rationals, the   odd 
cohomology groups of degree less than  or equal to the dimension 
 can be
modeled by the cohomology of an abelian variety, 
provided the geometric coniveau is maximal.  This provides an affirmative  answer to Mazur's question for all uni-ruled 
threefolds, for instance.
Concerning cohomology in degree three, we show that the image of the Abel--Jacobi map
admits a distinguished model over the rationals. 
\end{abstract}

\maketitle


\section*{Introduction}

We consider the  problem of determining  when the cohomology of a
smooth 
projective variety $X$ over a field $K$ can be modeled  by an abelian variety.
For example, for a prime number $\ell$ different from the characteristic of $K$,
it is well known that the first $\ell$-adic cohomology group $H^1(X_{\bar
K},\rat_\ell)$ can be modeled by the Albanese variety $\operatorname{Alb}(X)$ in
the sense that it is isomorphic as a $\operatorname{Gal}(K)$-representation to
$H^1(\operatorname{Alb}(X)_{\bar K},\rat_\ell)$.
   Our primary 
motivation is a special case of a problem posed by Barry Mazur
\cite{mazurprob,mazurprobICCM} at the   birthday conference for Joe Harris in
2011. To fix ideas, 
suppose $X$ is a smooth projective variety  over
 a number field $K$.  For each prime number $\ell$ and nonnegative integer $n$, 
   the cohomology group
  $H^{2n+1}(X_{\bar K},\rat_\ell)$ admits a continuous action by
  $\gal(K)$, and $H^{2n+1}(X_{\bar K},\rat_\ell(n))$ has weight one.
 Now further suppose that $h^{p,q}=0$ for $p+q=2n+1$, $p,q\ne n$.   Thanks to Mazur's Newton over
Hodge 
  Theorem \cite{mazurfrobhodge}, 
  the twisted Galois representation
  $H^{2n+1}(X_{\bar K},\rat_\ell(n))$ is still effective\,; each eigenvalue
  of Frobenius is actually an algebraic integer.    Thus,
  $H^{2n+1}(X_{\bar K},\rat_\ell(n))$ could be the cohomology group of
  some abelian variety in the sense that there is no obvious obstruction to the
  existence of an abelian variety $J/K$ such that for all prime numbers $\ell$ there is an isomorphism of
$\operatorname{Gal}(K)$-representations 
\begin{equation}
\label{eqmodel}
H^1(J_{\bar K},\rat_\ell)\cong 
H^{2n+1}(X_{\bar K},\rat_\ell(n)).
\end{equation}
Mazur's question
  is to  determine  when such an abelian variety
$J/K$
 actually
exists.

Note that an isomorphism as in   
\eqref{eqmodel}
only characterizes the 
abelian variety $J/K$ up to isogeny.  This ambiguity persists even if one
works
with
$\integ_\ell$-cohomology groups. 
Indeed, there exist elliptic curves
$E_1$ and
$E_2$ over a number field such that, for each $\ell$, the Tate modules $T_\ell E_1$ and
$T_\ell E_2$ are isomorphic as Galois-representations\,; and yet $E_1$ and $E_2$ are
not isomorphic, even over $\cx$ \cite[Sec.\ 12]{zarhin08}.
Following Mazur, the {\em phantom
isogeny class} for $X$, in degree $n$, is the isogeny class of abelian varieties
satisfying \eqref{eqmodel}.  Any member of this isogeny class will be called a
{\em phantom abelian variety} for $X$.
 (We make the
analogous definition for varieties over an arbitrary  \nolinebreak[4] base  \nolinebreak[4] 
field.)

This type of question has of course been considered in many contexts,
and results in the literature provide a great deal of motivation.  To
begin, from the arithmetic perspective, 
the theorems of Tate and Honda
\cite{tate66,tate71, honda68}  imply 
 (under certain further hypotheses\,; see \S \ref{S:FinField}) that for
a prime $\idp$  of good
reduction for $X$, there exists an abelian variety $J_\idp/\kappa(\idp)$ such that, if a
phantom abelian variety $J/K$ were to exist, then the
reduction of $J$ modulo $\idp$ would be isogenous to $J_\idp$.  
Further 
motivation comes from complex geometry.
Under our hypotheses on the Hodge numbers,  
  the intermediate Jacobian
$J^{2n+1}(X_{\mathbb C})=H^{n,n+1}(X_{\mathbb C})/H^{2n+1}(X_{\mathbb
  C},\mathbb Z)$ is equal to the algebraic intermediate Jacobian,
and is consequently an abelian variety.   Since the dual abelian variety $\widehat
J^{2n+1}(X_{\mathbb C})$ satisfies $H^1(\widehat J^{2n+1}(X_{\mathbb C}),\mathbb
Z_\ell)=H^{2n+1}(X_{\mathbb C},\mathbb Z_\ell(n))$, 
  if the intermediate
Jacobian were isogenous to an abelian variety that  descended to $K$,
this would provide a candidate for a phantom abelian variety.
As justification for this approach, recall that  a theorem of  Deligne \cite{deligneniveau} implies
that for a complete intersection of odd dimension with Hodge level $1$,
 the intermediate
Jacobian descends to $K$
 and  its dual provides a phantom abelian
variety.

     In
this paper, we establish  the existence of phantom
 abelian varieties for more general smooth projective varieties, but  under some
further hypotheses on the cohomology.
Namely, we make  the stronger, yet
 natural, hypothesis that
$H^{2n+1}(X_{\mathbb C},\rat)$ has geometric coniveau
$n$, meaning that $H^{2n+1}(X_{\mathbb C},\rat)$ is supported on a subvariety of $X_\cx$ of codimension $n$.  In fact, assuming the Tate
conjecture 
for $\widehat J \times_K X$, the geometric coniveau $n$ part is an \emph{a priori} upper bound on
what can be modeled directly with an abelian variety.

\begin{teoalpha}[(Theorem \ref{T:cycles'})]\label{T:cycles}  
Suppose $X$ is a smooth 
projective variety  
of pure dimension 
 over a field 
$K\subseteq
\mathbb C$.  If  $H^{2n+1}(X_{\mathbb C},\mathbb Q)$ has geometric coniveau
$n$, for some integer $n$ with $2n+1\le \dim X$, then 
 there exist an abelian variety $J$ over $K$, with $J_{\mathbb C}$ isogenous to
the   intermediate Jacobian $J^{2n+1}(X_{\mathbb C})$, and a
correspondence on $\widehat J\cross _KX$ that  induces, for each prime $\ell$, an
isomorphism of  $\operatorname{Gal}(K)$-representations
\begin{equation}\label{E:Tcyc}
H^1(\widehat J_{\bar K},\rat_\ell)\cong H^{2n+1}(X_{\bar K},\rat_\ell(n)).
\end{equation}
Moreover, its inverse is induced by a correspondence on $X\cross _K\widehat J$.
\end{teoalpha}

We note that the theorem provides an affirmative answer to Mazur's question for
all threefolds with $\operatorname{CH}_0$ supported on a surface (e.g.,  for 
all uni-ruled threefolds\,; see Corollary \ref{C:cycles3}), and, assuming the generalized Hodge conjecture, for all threefolds with $h^{3,0}=0$. 
More generally, Theorem \ref{T:cycles'} provides for all $n$ a phantom for the geometric coniveau part $\coniveau^n H^{2n+1}(X_{\bar K},\rat_\ell(n))$ as soon as $X$ satisfies the Lefschetz standard conjecture.

In Theorem \ref{T:cycles}, operating only under the hypothesis of geometric
coniveau $n$, we find a suitable phantom $J/K$ and show that its dual  is
geometrically
isogenous to the intermediate Jacobian $J^{2n+1}(X_\cx)$.  This does not imply
that $J^{2n+1}(X_\cx)$ itself descends to $K$. 
 In contrast, in the special case $n=1$, we can 
 single out an isomorphism
class (in fact, a model over $K$ of the dual of the intermediate Jacobian)
within the phantom isogeny class. 
More generally, without any hypothesis on geometric coniveau, we extend Murre's work \cite{murre83} on algebraic representatives to the setting of non-algebraically closed fields to show (see Corollary \ref{C:cycles3})\,:

\begin{teoalpha}\label{T:jacdescent} 
 Let $X$ be a smooth 
 projective  variety over a field $K\subseteq
 \mathbb C$.  
 The  abelian variety  $J^3_{a}(X_{\mathbb C})$ that is the image of the Abel--Jacobi map $AJ : A^2(X_\cx)\to J^3(X_\cx)$ 
 has a distinguished  model $J$ over $K$ making the Abel--Jacobi map  $AJ : A^2(X_\cx)\twoheadrightarrow  J_a^3(X_\cx)$    $\operatorname{Aut}(\mathbb C/K)$-equivariant.
 Moreover, $\hat J$ provides a phantom for  $\coniveau^1H^3(X_{\bar K},\rat_\ell(1))$.
\end{teoalpha}

We also establish results regarding specialization  of algebraic representatives (\S \ref{S:Specialization}), as well as algebraically closed base change of algebraic representatives (Theorem \ref{T:ACBC}).  
As suggested by Lenny Taelman,
our   results extending Murre's work  on algebraic representatives 
 can be interpreted in a functorial framework to provide algebraic representatives over any perfect field.  More precisely, 
let $X$ be a smooth projective variety over a perfect field $K$. Denote by  $\operatorname{A}^2_\natural(X)$ the contravariant functor on the category of  smooth  integral varieties over $K$ to the category of abelian groups given by families of  codimension $2$ cycles on $X$ whose geometric  fibers are algebraically trivial. Then there exist an abelian variety $ \operatorname{Ab}^2(X )$ over $K$ and a natural transformation of contravariant functors $\operatorname{A}^2_\natural(X) \to  \operatorname{Ab}^2(X)$ that  is initial among all natural transformations of contravariant functors $\operatorname{A}^2_\natural(X) \to A$ to an abelian variety $A$ over $K$.   This approach will appear in  \cite{ACMVfunctor}. 

In concomitant work \cite{ACMVquadric}, following \cite {beauville77} we provide a geometric
construction of the phantom abelian variety when $X$ is an odd-dimensional quadric bundle over a surface over a
field of characteristic $\neq 2$.  
Mazur has also asked   for the relationship between phantom abelian varieties and Albaneses of Hilbert and Kontsevich spaces associated to  $X$.  In future work we will discuss how this question relates to  Theorem \ref{T:jacdescent}.

\noindent \textbf{Outline.}   
In Section \ref{S:Pr}, we review the notions of geometric, 
 Hodge,  and Tate coniveau filtrations and relate the final step of these filtrations on odd-degree cohomology groups with the cohomology of curves, the Abel--Jacobi map, and Mazur's question on phantoms. The case of finite fields is discussed in  \S
\ref{S:FinField}.
We prove  our results on phantom abelian varieties in two parts that complement each
other\,:  Theorem \ref{T:cycles}  (Theorem \ref{T:cycles'}) gives general criteria for the
existence of a phantom, and  Theorem \ref{T:jacdescent} (Theorem \ref{T:cycles3'} and Corollary \ref{C:cycles3}) shows that the image of the second Abel--Jacobi map descends to a field of definition.
The proof of Theorem \ref{T:cycles'} relies essentially on polarizations of Hodge structures, carried out in \S \ref{S:PrThA1}, while the proof of Theorem \ref{T:cycles3'} is based on Murre's theory of algebraic representatives \cite{murre83}, and is carried out in \S \ref{S:c2c}, \S \ref{S:Gal}, \S \ref{S:pThAII}. In Section \ref{S:c2c} we review the results of Murre on algebraic representatives for smooth projective varieties defined over algebraically closed fields and prove in Theorem \ref{T:ACBC} that algebraic representatives, if they exist, behave well under base change along algebraically closed fields. In Section \ref{S:Gal} we extend Murre's theory of algebraic representative to the setting of Galois field extensions and prove in Theorem \ref{T:Desc} that the algebraic representative, if it exists, inherits a Galois descent datum.    In Section  \ref{S:pThAII} we prove Theorem \ref{T:cycles3'}, providing the connection between algebraic representatives and phantoms.  In Section  \ref{S:Complements} we discuss specialization of algebraic representatives and phantom abelian varieties, as well as some results concerning algebraic representatives for higher codimension cycles.

\noindent \textbf{Acknowledgements.} 
We would like to thank Barry Mazur for his comments and encouragement. We are
very grateful to Claire Voisin for explaining the connection  between Mazur's
question and Murre's work, which directly led to our work on
algebraic representatives.  We thank the anonymous referees for a number of comments that have strengthened
our results, and improved the exposition.   
In addition, we
thank David
Grant,  Brendan Hassett, Bruno Kahn, Vasudevan  Srinivas, Lenny Taelman,
Rahbar Virk, Felipe Voloch, 
and Jonathan Wise for useful
conversations.

\noindent \textbf{Conventions.}
A \emph{variety} over a field is a geometrically reduced separated
scheme of finite type over that field.
A \emph{curve}
(resp.~\emph{surface})  is a variety of pure dimension $1$ (resp.~$2$).
Given a variety $X$ over a field,   $\operatorname{CH}^i(X)$  denotes  the Chow group of codimension~$i$ cycles modulo rational equivalence, and $\operatorname{A}^i(X)\subseteq \chow^i(X)$  denotes the subgroup of cycles algebraically equivalent to $0$.   
For an abelian variety $A$ over a field $K$, we denote by $\widehat A$ the dual
abelian variety $\operatorname{Pic}^0_{A/K}$.  

\section{Phantoms and coniveau filtrations}
\label{S:Pr}

The aim of this section is to highlight the relevance of the geometric coniveau filtration in the context of Mazur's question. The main question we will tackle in this work is Question \ref{Q:NivMod}, where we ask whether the deepest part of the geometric coniveau filtration on odd-degree cohomology groups can be modeled by abelian varieties. To this end we discuss Hodge, Tate, and geometric coniveau filtrations, and their link to the Abel--Jacobi map.
One of the key points for later arguments is  Proposition \ref{P:niveau} stating
that the deepest part of the geometric coniveau filtration can be realized via
the cohomology of a curve.

\subsection{Hodge theory and geometric Galois
  representations} \label{S:PrHtGr} 
 While our motivating
problem is posed purely in terms of Galois representations, it has a
natural Hodge-theoretic analogue.

\begin{que}[{\cite[Que.~2.43]{voisinDiag}}]\label{Qvoisinghc}
Let $X$ be a smooth complex projective variety.  
Given a weight~$i$ Hodge sub-structure $L \subseteq  H^i (X, \mathbb Q)$ of
Hodge coniveau~$r$, so that
$$
L_{\mathbb C} =L^{i-r,r}\oplus  \cdots \oplus L^{r,i-r},
$$
(and $L(r)$ is an effective  weight $i-2r$  Hodge structure),  
does there exist a smooth complex projective variety $Y$ admitting   a 
Hodge
sub-structure of $L'\subseteq H^{i-2r}(Y,\mathbb  Q)$ with $L'\cong L(r)$\,?
\end{que}

In the case that $i=2n+1$ is odd, and $L$ has Hodge coniveau $n$,  the
question has an affirmative answer.  Indeed, 
 recall that
the algebraic intermediate Jacobian, $J^{2n+1}_{\operatorname{alg}}(X)
\subseteq J^{2n+1}(X)$, is defined to be the largest complex sub-torus
with tangent space contained in $H^{n,n+1}(X)$. 
 This is an abelian
variety (e.g., \cite[p.304]{voisinI}), and clearly $L(n)$
 can be obtained as a
Hodge sub-structure of $H^1(\widehat J^{2n+1}_{\operatorname{alg}},\mathbb Q)$.

For Mazur's question we are interested in  an $\ell$-adic analogue of Question
\ref{Qvoisinghc}.  
Now let $K$ be a field, such as a number field, which is finitely generated over the prime field.
 A $\operatorname{Gal}(K)$-representation $V_\ell$
over $\mathbb Q_\ell$  has weight $i$ if, for each unramified prime $v$ of $K$
(and each embedding $\bar\rat_\ell \ra \cx$), each eigenvalue of Frobenius at
$v$ has size $\sqrt{q_v^i}$, where $q_v$ is the cardinality of the residue field
at $v$.
We say $V_\ell$ is entire  if
all eigenvalues of Frobenius are algebraic integers.  We say  $V_\ell$ is 
effective of weight $w$ if  $V_\ell$ and
$\operatorname{Hom}(V_\ell,\rat_\ell(-w))$ are
entire and of weight $w$.  
For a smooth projective variety $X$ over
$K$, by Poincar\'e
duality and hard Lefschetz, we have  $\operatorname{Hom}(H^i(X_{\bar K},\mathbb
Q_\ell),\mathbb Q_\ell(-i))\cong H^i(X_{\bar K},\mathbb Q_\ell)$, so that a
geometric representation $H^i(X_{\bar K},\mathbb Q_\ell(\nu))$ is effective (of
weight $i-2\nu$) if and only if it is entire (e.g.,
\cite[p.269]{jannsenseattle}).

\begin{que}
\label{Qgtc}
Let $X$ be a smooth
    projective variety over  a field $K$ that is finitely generated over the prime
    field.
Given a  $\operatorname{Gal}(K)$-sub-representation $L_\ell \subseteq  H^i
(X_{\bar{K}}, \mathbb Q_\ell)$  with the property that $L_\ell(r)$ is effective
(of weight $i-2r$), does there exist a smooth projective variety $Y$  over $K$ 
admitting a $\operatorname{Gal}(K)$-sub-representation  $L'_\ell\subseteq
H^{i-2r}(Y_{\bar K},\mathbb  Q_\ell)$ with $L'_\ell \cong L_\ell (r)$\,?
\end{que}

Unlike  the case of Hodge structures, even when $i=2n+1$ is odd, and $L_\ell(n)$
is effective of weight $1$, it is not known exactly when Question \ref{Qgtc} 
will have an affirmative answer.  Theorem \ref{T:cycles} provides an affirmative
 answer under  further hypotheses on the geometric coniveau\,;   we review
coniveau in the sections  below.   (See \S \ref{S:FinField} for a
discussion of the connection between  Question \ref{Qgtc} over finite
fields and the theory of Honda--Tate\,; see  \cite{ACMVabtriv} for a
discussion of the extent to which $Y$ may be taken to be geometrically
integral.)

\subsection{Coniveau filtrations and the generalized Hodge and Tate
conjectures}\label{S:PrCoNi}
Questions \ref{Qvoisinghc} and \ref{Qgtc} are related to the generalized Hodge
and Tate
 conjectures (\cite{grothBrauerIII}).    Our treatment here follows \cite[\S
3]{jannsenseattle}.
Let $X$ be a smooth projective
 variety over a  field   
$K$.
The \emph{geometric
coniveau filtration} $\coniveau^\nu H^i(X_{\bar{K}},\rat_\ell) $    is 
defined by\,: 
\begin{align}
\coniveau^\nu H^i(X_{\bar{K}},\rat_\ell)  
\label{eqconiveausub}
&= \sum_{\substack{ Z\subseteq
    X\\\text{closed, codim }\ge \nu}} \operatorname{Ker}(
H^i(X_{\bar{K}},\rat_\ell) \rightarrow
H^i(X_{\bar{K}}
 \backslash Z_{\bar{K}}, \rat_\ell)).
\end{align}
By resolution of singularities and by Deligne's theory of mixed Hodge structures when $\operatorname{char}\, K= 0$, or by de Jong's theory of alterations and by the Weil conjectures when $K$ is a finite field, the geometric coniveau filtration can also be characterized by the following formula (see \cite[Sec.\
4.4(d)]{illusiemiscellany})\,:
\begin{align}
\coniveau^\nu H^i(X_{\bar{K}},\rat_\ell)  
\label{eqconiveaumap}
&= \sum_{\substack{f:Y \rightarrow X\\\dim Y = \dim X - \nu \\
		Y\text{ smooth proj}}}\operatorname{Im}(f_*:H^{i-2\nu}(Y_{\bar{K}},
\rat_\ell(-\nu)) \rightarrow
H^i(X_{\bar{K}},\rat_\ell)).
\end{align}

Now suppose that $K$ is a sub-field of $\cx$.
The \emph{geometric coniveau filtration on Betti cohomology} 
$\coniveau^\nu H^i(X_\cx,\rat)$ is defined in  the  same way.
 Note that usually a class  $\alpha \in  H^i(X_\cx,\rat)$ is said to have
geometric coniveau $\nu$ if it vanishes when restricted to the complement of a
closed \emph{complex} subvariety of $X_\cx$ of pure codimension $\nu$. However,
using a standard
 argument 
 involving spreading out,
 specialization, and then Galois
 averaging, it suffices as indicated above to consider projective varieties $Z$, and  smooth projective 
 varieties $Y$, of pure dimension $\dim X-\nu$ defined
over
 $K$.

By virtue of the fact that  the canonical comparison isomorphisms between Betti,
de Rham and \'etale cohomologies are compatible with Gysin maps, and
even cycle maps \cite[\S 1]{dmos}, there are
canonical   isomorphisms
\begin{equation}
\label{E:CompIsom}
(\coniveau^\nu(H^i(X_\cx,\rat)))\tensor_\rat \rat_\ell \iso \coniveau^\nu
H^i(X_{\bar K},\rat_\ell),
\end{equation}
which justify our apparent abuse of notation.
The generalized Hodge and Tate conjectures relate these geometric coniveau
filtrations to two other filtrations.  

Call a $\mathbb Q$-Hodge structure $V$ effective if $V^{p,q}= 0$ unless $p,q\ge
0$. The \emph{Hodge coniveau filtration} $\hconiveau^\nu$ of  $H^i(X_\cx,\rat)$
has
term
$
\hconiveau^\nu H^i(X_\cx,\rat)
$
 defined to be the span
  of all Hodge sub-structures $V \subseteq
H^i(X_\cx,\rat)$
such that $V(\nu)$ is effective, i.e., such that $V^{p,q}=0$ unless $p,q \ge
\nu$.  The Gysin map provides a natural inclusion 
\begin{equation}
\label{eqghc}
\coniveau^\nu H^i(X_\cx,\rat) \subseteq \hconiveau^\nu H^i(X_\cx,\rat)\,;
\end{equation}
the \emph{generalized Hodge conjecture} (for $X$, in degree $i$) asserts
equality in \eqref{eqghc}.

Similarly, if $K$ is finitely generated over the prime field,
define a filtration
$
\etconiveau^\nu H^i(X_{\bar {K}}, \rat_\ell)
$
as the span 
of all sub-$\gal(K)$-representations  $V_\ell\subset H^i( X_{\bar
{K}},\rat_\ell)$ such that $V_\ell (\nu)$   is effective (of weight $i-2\nu$). 
Again, the Gysin map  (this time, for \'etale cohomology) provides a natural
inclusion
\begin{equation}
\label{eqgtc}
\coniveau^\nu H^i(X_{\bar K},\rat_\ell) \subseteq \etconiveau^\nu H^i(X_{\bar
K},\rat_\ell),
\end{equation}
and the \emph{generalized Tate conjecture}  (for $X$, in degree $i$) 
asserts equality in \eqref{eqgtc}.
Sometimes, we will abuse notation slightly and denote the $r$-th
twist of step $\nu$ in the geometric  coniveau filtration by
\[
\coniveau^\nu H^i(X_{\bar K},\rat_\ell(r)) := (\coniveau^\nu H^i(X_{\bar
  K},\rat_\ell))\tensor \rat_\ell(r)\,;
\]
we employ similar notation for the other filtrations (also on Betti cohomology).

Important to us will be the following characterization of the final step of the
geometric coniveau filtration for odd-degree cohomology groups\,:

\begin{pro}[{\cite[\S 1.2.3]{Vial2}}]\label{P:niveau}
Suppose $X$ is a smooth  projective variety  
over a field $K$, which is either of characteristic zero or finite,
and let $\ell\not = \operatorname{char}(K)$ be a prime. 
Let $n$ be a non-negative integer. Then there exist a smooth
projective curve
$C$ over $K$
and a correspondence $\gamma \in
\operatorname{CH}^{n+1}(C\times_K
X)\otimes_{\mathbb Z} {\mathbb{Q}}$  such that the induced morphism of
$\operatorname{Gal}(K)$-representations
$$
\gamma_*: H^1(C_{\overline K},{\mathbb{Q}}_\ell) \rightarrow
H^{2n+1}(X_{\overline K},{\mathbb{Q}}_\ell(n))
$$ 
has image
$\coniveau^nH^{2n+1}(X_{\overline K},{\mathbb{Q}}_\ell(n))$. 
If $K \subseteq \mathbb C$, then the morphism of Hodge structures
$$
\gamma_*: H^1(C_{\mathbb{C}},{\mathbb{Q}}) \rightarrow
H^{2n+1}(X_{\mathbb{C}},{\mathbb{Q}}(n))
$$ 
has image
$\coniveau^nH^{2n+1}(X_{\mathbb{C}},{\mathbb{Q}}(n))$.
Moreover, $\coniveau^n
H^{2n+1}(X_{\bar K},\rat_\ell(n))$
is a semisimple representation of $\gal(K)$.

\end{pro}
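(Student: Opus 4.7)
The plan is to construct $\gamma$ as a composition $\gamma = \Gamma_f \circ \gamma'$, where $f\colon Y \to X$ is a morphism from an auxiliary smooth projective $K$-variety of pure dimension $d := \dim X - n$, and $\gamma' \in \chow^1(C \times_K Y)\otimes \rat$ is a $\rat$-divisor class. Composition of correspondences obeys the codimension rule $\chow^a(A\times B)\otimes \chow^b(B\times C) \to \chow^{a+b-\dim B}(A\times C)$, which automatically places $\gamma$ in $\chow^{1 + \dim X - d}(C\times_K X)\otimes \rat = \chow^{n+1}(C\times_K X)\otimes \rat$, with cohomological action $\gamma_* = f_*\circ \gamma'_*$. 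It thus suffices to produce $(Y,f)$ and $(C,\gamma')$ with the right images on $H^1$.

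For the auxiliary variety $Y$, I invoke the characterization \eqref{eqconiveaumap} of the geometric coniveau: since $H^{2n+1}(X_{\bar K},\rat_\ell)$ is finite-dimensional, finitely many pairs $(Y_i,f_i\colon Y_i\to X)$ with $Y_i/K$ smooth projective of pure dimension $d$ suffice to make $\sum (f_i)_*$ surject onto $\coniveau^n H^{2n+1}(X_{\bar K},\rat_\ell(n))$. Setting $Y := \coprod Y_i$ (still a smooth projective $K$-variety of pure dimension $d$ in the paper's conventions) with the combined morphism $f\colon Y \to X$ reduces me to a single pair with $f_*\colon H^1(Y_{\bar K},\rat_\ell) \twoheadrightarrow \coniveau^n H^{2n+1}(X_{\bar K},\rat_\ell(n))$.

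For $(C,\gamma')$, I take $\iota\colon C\hookrightarrow Y$ to be a sufficiently general complete intersection curve cut by $K$-ample divisors on $Y$ (using classical Bertini if $K$ is infinite, and Poonen's Bertini if $K$ is finite). By the weak Lefschetz theorem $\iota^*\colon H^1(Y_{\bar K},\rat_\ell) \hookrightarrow H^1(C_{\bar K},\rat_\ell)$ is injective, so dually the morphism of $K$-abelian varieties $\iota_*\colon J(C) \to \alb(Y)$ is surjective. The classical identification of $\Hom_K(\alb(Y), J(C))\otimes \rat$ as a quotient of $\chow^1(C\times_K Y)\otimes \rat$ (via the Poincar\'e bundle on $J(C)\times C$ together with the universal property of $\operatorname{Pic}^0$) then lifts this surjection to a divisor class $\gamma' \in \chow^1(C\times_K Y)\otimes \rat$, whose cohomological action is the surjection $H^1(C_{\bar K},\rat_\ell) \twoheadrightarrow H^1(Y_{\bar K},\rat_\ell)$.

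Putting these together, $\gamma_* = f_*\circ \gamma'_*$ has image $f_*(H^1(Y_{\bar K},\rat_\ell)) = \coniveau^n H^{2n+1}(X_{\bar K},\rat_\ell(n))$, as required. The Hodge case $K\subseteq \cx$ follows verbatim: the Betti definition of geometric coniveau admits the same reduction to one $(Y,f)$ (via the remark after \eqref{eqconiveaumap}), and algebraic correspondences act compatibly on Betti and \'etale cohomology by \eqref{E:CompIsom}. The main obstacle I anticipate is the $K$-rationality of both $C$ and $\gamma'$: for $C$ it is a Bertini-type matter, while for $\gamma'$ it requires that the Weil-type correspondence between morphisms of Jacobians/Albaneses and divisor classes on a product be Galois-equivariant, which in turn follows from the functoriality of the Poincar\'e bundle and the universal property of $\operatorname{Pic}^0$ over an arbitrary base.
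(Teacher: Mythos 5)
Your proposal is correct and follows the same skeleton as the paper's proof: use the characterization \eqref{eqconiveaumap} of geometric coniveau plus finite-dimensionality to produce finitely many $K$-rational pairs $(Y_i,f_i)$ of pure dimension $\dim X-n$, assemble them into a single $f:Y\to X$ with $f_*$ surjecting onto $\coniveau^nH^{2n+1}(X_{\bar K},\rat_\ell(n))$, cut a curve $C\subset Y$ component-by-component via Bertini (Poonen over finite fields), realize a surjection $H^1(C_{\bar K},\rat_\ell)\twoheadrightarrow H^1(Y_{\bar K},\rat_\ell)$ by a $K$-rational correspondence, and compose with $\Gamma_f$. The one place you diverge is the algebraization of that surjection. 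The paper factors it as the Gysin map $\iota_*:H^1(C_{\bar K},\rat_\ell)\twoheadrightarrow H^{2d_Y-1}(Y_{\bar K},\rat_\ell(d_Y-1))$, induced by the graph of $\iota$, followed by the inverse of the hard Lefschetz isomorphism $L^{d_Y-1}$, whose algebraicity is Kleiman's theorem that the Lefschetz standard conjecture holds in degree $\le 1$. You instead pass through the Picard--Albanese dictionary, lifting the surjection $J(C)\to\alb(Y)$ dual to $\iota^*$ to a $\rat$-divisor class on $C\times_K Y$. These are two packagings of the same classical input (Kleiman's proof of the degree-$1$ case goes precisely through the Picard and Albanese varieties), so neither is more general than the other. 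What your route costs is having to verify, over an arbitrary field $K$, that (i) the divisorial-correspondence isomorphism between $\pic(C\times_K Y)$ modulo pullbacks and $\Hom(J(C),\pic^0_{Y/K})$ is available $K$-rationally after $\otimes\,\rat$ --- the Poincar\'e bundle need not exist without a rational point and $\pic^0_{Y/K}$ may be non-reduced in positive characteristic, though both issues dissolve after tensoring with $\rat$ and Galois-averaging against a canonically defined (hence Galois-equivariant) target map --- and (ii) the cohomological action of the resulting divisor class agrees with $V_\ell$ of the corresponding homomorphism of abelian varieties. Citing Kleiman, as the paper does, shortcuts both verifications; otherwise your argument is sound.
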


\begin{proof}  This is proved in \cite{Vial2} in the case of Betti 
cohomology (see the formula 
$\widetilde{\coniveau}^{\lfloor i/2 \rfloor}H_i(X) = {\coniveau}^{\lfloor i/2 \rfloor}H_i(X)$ in \S 1.2.3 therein). 
The proof adapts to our more general setting\,:
up to working component-wise, we may and do assume
that $X$ is connected, say of dimension $d$.
Since $H^{2n+1}(X_{\bar K},{\mathbb{Q}}_\ell(n))$ 
 is a finite-dimensional $\rat_\ell$-vector space,  from the characterization of coniveau  given in \eqref{eqconiveaumap} 
there exist 
  a 
 smooth projective variety  $Y$ of dimension $d_Y=d-n$ over $K$
and a
$K$-morphism $f:Y\to X$ such that 
$$ \coniveau^nH^{2n+1}(X_{\bar K},{\mathbb{Q}}_\ell(n)) = \mathrm{Im}\left(
f_*:H^1({Y}_{\bar K},{\mathbb{Q}}_\ell)
\rightarrow
H^{2n+1}(X_{\bar K},{\mathbb{Q}}_\ell(n))\right).
$$ 
  By Bertini \cite{poonen}, let $\iota : C \hookrightarrow Y$ be a one-dimensional smooth linear section of $Y$.   The hard Lefschetz 
theorem \cite[Thm.\ 4.1.1]{deligneweil2} states that intersecting with $C$ yields an isomorphism 
$$
\iota_*\iota^*:H^1(Y_{\bar K},\mathbb Q_\ell) \hookrightarrow H^1(C_{\bar K},\mathbb
Q_\ell)  \twoheadrightarrow H^{2d_Y-1}(Y_{\bar
K},\mathbb Q_\ell(d_Y-1)).
$$
The Lefschetz Standard Conjecture is known for $\ell$-adic cohomology and for
Betti cohomology in degree $\leq 1$ (see \cite[Thm.~2A9(5)]{kleiman}),
meaning in our case that the map
$\left(\iota_*\iota^*\right)^{-1}$ 
is induced by a correspondence $\Lambda \in \operatorname{CH}^1(Y\times_K
Y)_\rat$.   
Therefore, the composition 
\begin{equation}\label{E:LCoNi}
\xymatrix@C=2em{
 H^1(C_{\bar K},\mathbb Q_\ell) \ar@{->>}[r]^<>(0.5){\iota_*} &  
 H^{2d_Y-1}(Y_{\bar K},\mathbb Q_\ell(d_Y-1)) \ar@{->}[r]^<>(0.5){
  \Lambda_*}_<>(0.5)\cong & H^1(Y_{\bar K},\mathbb Q_\ell)
 \ar[r]^<>(0.5){f_*} & H^{2n+1}(X_{\bar K},\mathbb Q_\ell(n)),\\
}
\end{equation}
which has image
$\coniveau^nH^{2n+1}(X_{\bar K},{\mathbb{Q}}_\ell(n))$, is induced by the required correspondence.

That $\coniveau^n
H^{2n+1}(X_{\bar K},\rat_\ell(n))$
is a semisimple representation of $\gal(K)$ follows immediately from the fact that $H^1(C_{\bar
 K},\rat_\ell)$ is semisimple if $K$ is a field which is
finitely generated (over  $\rat$ \cite{faltings83,faltingswustholz} or a finite
field \cite{tate66,zarhin75}).

When $K \subseteq \cx$, the comparison isomorphisms \eqref{E:CompIsom} 
establish
that  the image  of the induced morphism of Hodge structures $\gamma_*:
H^1(C_{\cx},{\mathbb{Q}})
\rightarrow
H^{2n+1}(X_{\cx},{\mathbb{Q}}(n))$ is
$\coniveau^nH^{2n+1}(X_{\cx},{\mathbb{Q}}(n))$.   
\end{proof}

\subsection{Coniveau filtrations and the Abel--Jacobi map} \label{S:PrCoNiAJ}
In this section we review the connection between the coniveau filtrations and the
Abel--Jacobi map. 
Recall that, for a smooth complex projective variety $X$, the intermediate Jacobian is a complex torus defined by
\[
J^{2n+1}(X) := F^{n+1} H^{2n+1}(X,\cx) \backslash H^{2n+1}(X,\cx) /
H^{2n+1}(X,\integ),
\]
and that there is an \emph{Abel--Jacobi map} $$\mathrm{Ker}\left( \operatorname{CH}^n(X) \to H^{2n}(X,\mathbb Z)\right) \to  J^{2n+1}(X_{\mathbb C}).$$
By definition
the algebraic intermediate Jacobian $J^{2n+1}_{\operatorname{alg}}(X)$ is the largest sub-torus of $J^{2n+1}(X)$ that is projective\,; it
determines the $n$-th piece of the Hodge coniveau filtration\,:
\begin{eqnarray*}
\hconiveau ^nH^{2n+1}(X,\mathbb Q)\otimes_{\mathbb Q}\mathbb
C&=&T_0J^{2n+1}_{\operatorname{alg}}(X) \oplus \overline{T_0
J^{2n+1}_{\operatorname{alg}}(X)}\\
\hconiveau^nH^{2n+1}(X,\mathbb
Q(n))&=&H^1(\widehat J^{2n+1}_{\operatorname{alg}}(X),\mathbb Q).
\end{eqnarray*}

On the other hand, it is standard \cite[Th. 12.17]{voisinI}
that the $n$-th piece of the geometric coniveau filtration
$\coniveau^nH^{2n+1}(X,\mathbb Q)$ is determined by the image of the 
Abel--Jacobi map.
More precisely, the  image $J^{2n+1}_a(X)$ of the Abel--Jacobi map
$AJ:\operatorname{A}^{n+1}(X)\to J^{2n+1}(X)$ 
has tangent space $T_0J^{2n+1}_a(X)=H^{n,n+1}(X)\cap
\coniveau^nH^{2n+1}(X,\mathbb C)$, and 
\begin{eqnarray*}
 \coniveau^nH^{2n+1}(X,\mathbb Q)\otimes_{\mathbb Q}\mathbb
C&=&T_0J^{2n+1}_{a}(X)
\oplus \overline{T_0 J^{2n+1}_{a}(X)}\\
  \coniveau^nH^{2n+1}(X,\mathbb Q(n))&=&H^1(\widehat J^{2n+1}_{a}(X),\mathbb Q).
\end{eqnarray*}
Thus the Abel--Jacobi
map restricted to algebraically trivial cycles
$$AJ:\operatorname{A}^{n+1}(X)\to J^{2n+1}(X)$$ is surjective if and only if
$H^{2n+1}(X,\mathbb Q)$ has geometric coniveau $n$.

In light of this discussion,
the first goal of this paper is to consider analogous questions for geometric
Galois representations\,:

\begin{que} \label{Q:NivMod}
Given a smooth  
projective
 variety $X$ over a field $K$ finitely generated over the prime field, do there exist 
abelian varieties $\tilde J$ and $J$ defined over $K$ such that for each prime
$\ell$, 
\begin{eqnarray*}
\etconiveau^n H^{2n+1}(X_{\overline K},\mathbb Q_\ell(n)) &\cong &
H^1(\tilde
J_{\overline K},\mathbb Q_\ell)\\
 \coniveau^n H^{2n+1}(X_{\overline K},\mathbb Q_\ell(n)) &\cong& H^1(
J_{\overline
K},\mathbb Q_\ell)
\end{eqnarray*}
 as $\operatorname{Gal}(K)$-modules\,?  
\end{que}

The abelian variety $\tilde J/K$ would provide an answer to the
special case of Question \ref{Qgtc} that we are considering in this
paper, and consequently to Mazur's question (i.e., the case where $\etconiveau^n
H^{2n+1}(X_{\overline K},\mathbb Q_\ell(n)) = H^{2n+1}(X_{\overline K},\mathbb
Q_\ell(n))$). Note that if $\tilde J$ exists, then a direct consequence of the Tate conjecture for $X\times_K \tilde J$ would be that $\etconiveau^n$ and $\coniveau^n$ agree on $H^{2n+1}(X_{\overline K},\mathbb Q_\ell(n))$ (see also  \cite[p.34]{voisinDiag} for the corresponding Hodge-theoretic statement).
 Our aim in this paper is to
construct the abelian
variety $J/K$, under some further hypotheses.

\subsection{Mazur's question over finite fields}
\label{S:FinField}

Honda--Tate theory suggests an approach to Mazur's question over
finite fields.  Consider a smooth projective
variety  $X/\ff_q$
such that $h^{2n+1-j,j}(X)=0$ unless $j \in \st{n,n+1}$.  By Mazur's ``Newton
over Hodge'' theorem \cite{mazurfrobhodge}, each eigenvalue of Frobenius, acting
on $H^{2n+1}(X_{\bar\ff_q},\rat_\ell)$, is divisible by $q^n$, and thus
$H^{2n+1}(X_{\bar\ff_q},\rat_\ell(n))$ is an effective
$\gal(\ff_q)$-representation of weight one.  
It is then natural to ask if there is an abelian variety $J/\mathbb F_q$ such
that $H^1(J_{\bar\ff_q},\rat_\ell)\cong H^{2n+1}(X_{\bar\ff_q},\rat_\ell(n))$ as
Galois representations.  (More generally, of course, one has Question
\ref{Qgtc}.)  
Honda--Tate theory asserts a bijection
\begin{equation*}
\xymatrix{
\{\text{conjugacy classes of } q\text{-Weil numbers}\} \ar[r]^<>(0.5){\sim} & 
\{\text{isogeny classes of simple abelian varieties}/\mathbb F_q\}
}
\end{equation*}
which we crudely denote $\varpi \mapsto A_\varpi$.   The Galois
representation attached to an abelian variety over a finite field is
semisimple \cite{tate66}, and one immediately deduces from the above bijection that if
$V_\ell$ is a semisimple  effective weight one $\rat_\ell$-representation of
$\gal(\ff_q)$, then there exist an abelian variety $A/\ff_q$ and an
inclusion of $\gal(\ff_q)$-representations 
\begin{equation}
\label{eq:htincl}
\xymatrix{
V_\ell \ \ar@{^(->}[r] & H^1(A_{\bar\ff_q},\rat_\ell).
}
\end{equation}
By starting with a Weil number $\varpi$ for which
$[\rat(\varpi):\rat] < 2\dim A_\varpi$
    (see, e.g.,
\cite[p.\ 98]{tate71} for examples), it is easy to construct
semisimple representations $V_\ell$ for which \eqref{eq:htincl} is never an
isomorphism.  Nonetheless, we have\,:

\begin{lem}
  Let $X/\ff_q$ be a smooth projective variety.  Suppose that $X$ is
  of geometric coniveau $n$ in degree $2n+1$. 
\begin{alphabetize}
\item There is an abelian
  variety $A/\ff_q$ such that, for all $\ell\not =
  \operatorname{char}(\ff_q)$,
 there is an
  inclusion of Galois representations
\begin{equation}
\label{eq:phantomff}
\xymatrix{
H^{2n+1}(X_{\bar\ff_q},\rat_\ell(n)) \ \ar@{^(->}[r] & H^1(A_{\bar\ff_q},\rat_\ell).
}
\end{equation}
\item If $X$ is ordinary (the  Newton and Hodge polygons in degree $2n+1$ coincide), or if the Tate conjecture holds for
  $X\cross_{\ff_q}X$, then one may choose $A$ so that
  \eqref{eq:phantomff} is an isomorphism.
\end{alphabetize}
\end{lem}

\begin{proof}
Part (a) follows immediately from Honda--Tate theory and from the
semisimplicity of the Galois representation $H^{2n+1}(X_{\bar\ff_q},\rat_\ell(n))$ (see Proposition \ref{P:niveau}).

For part (b), first suppose that $X$ is ordinary in degree $2n+1$.
 It suffices to
verify that for each Weil number $\varpi$ occurring as an eigenvalue
of Frobenius in $H^{2n+1}(X_{\bar\ff_q},\rat_\ell(n))$, one has
$[\rat(\varpi):\rat] = 2 \dim A_\varpi$.  By ordinarity, all slopes of
the Newton polygon of $H^{2n+1}(X_{\bar\ff_q},\rat_\ell(n))$ are $0$
and $1$.  For each $\varpi$, the calculation of $\End(A_\varpi)$ in \cite[Thm.\
1]{tate71} implies that $[\rat(\varpi):\rat] = 2\dim A_\varpi$.

Second, suppose instead that the Tate conjecture holds for $X\cross_{\mathbb F_q}
X$.  Then one can calculate \cite[Prop.\
2.4
and 2.8]{milnemotivesff}
 the
dimension of (the motive corresponding to)
$H^{2n+1}(X_{\bar\ff_q},\rat_\ell(n))$.  The calculation of this dimension, which relies only on
the Weil numbers, is the same expression used to calculate the
dimension of an abelian variety with these Weil numbers.  In
particular, \eqref{eq:phantomff} is an isomorphism.
\end{proof}

\begin{rem}
In a similar spirit (but necessarily more involved fashion), Volkov
\cite{volkov05} has characterized the potentially crystalline $p$-adic
representations of
$\gal(\rat_p)$ which arise as summands of the $p$-adic Tate module of
an abelian variety over $\rat_p$.  As with finite fields, this lets
one conclude that if $X/\rat_p$ is a smooth  projective
threefold with
$h^{3,0}(X) = 0$, and if $X$ acquires good reduction over a tamely
ramified extension of $\rat_p$, then $H^3(X_{\bar\rat_p},\rat_p(1))$
is isomorphic to a sub-representation of $H^1(A_{\bar\rat_p},\rat_p)$
for some abelian variety $A/\rat_p$.
\end{rem}

\section{Phantoms via correspondences and polarizations} \label{S:PrThA1}

In this section we prove the following theorem, which implies both the first
part of
Theorem \ref{T:cycles}, and a partial, affirmative answer to Question
\ref{Q:NivMod}.   We will write $\operatorname{CH}^*(-)_\rat$ for
$\operatorname{CH}^*(-)
\otimes_{\mathbb Z}\rat$.

\begin{teo}\label{T:cycles'}  
Suppose $X$ is a smooth projective variety of pure dimension 
$d$ over a field  $K \subseteq \mathbb C$. 
Let $n$ be a non-negative integer.   Then there exist a
smooth (possibly reducible) 
projective curve $C$ over $K$,
 and a correspondence $\gamma \in
\operatorname{CH}^{n+1}(C\times_K
X)_{\mathbb{Q}}$,
such that for all
prime numbers $\ell$ the induced map
$$
\gamma_*:H^1((J_{C/K})_{\overline K},\mathbb Q_\ell) \to H^{2n+1}(X_{\overline
K},\mathbb Q_\ell(n))
$$
has image $\coniveau^nH^{2n+1}(X_{\overline K},\mathbb Q_\ell(n))$.    Assume
further any of the
following\,:
\begin{alphabetize}
\item $2n+1\leq 3$\,;

\item $X$ satisfies the Lefschetz standard conjecture\,;

\item $2n+1\leq d$ and $H^{2n+1}(X_{\mathbb C},\mathbb Q(n))$
 is of geometric
niveau $n-1$, in the sense of \cite{Vial2},
 \emph{i.e.},
 there exists a (possibly reducible) smooth
projective $3$-fold $Y$ over $K$
and a
correspondence $\Gamma \in
\operatorname{CH}^{n+2}(Y \times_K X)_{\rat}$ such that $\Gamma_* : H^3(Y_\cx,
\rat(1)) \rightarrow H^{2n+1}(X_\cx,\rat(n))$ is surjective\,;
\item $2n+1\leq d$ and $H^{2n+1}(X_{\mathbb C},\mathbb Q(n))$ has geometric
coniveau $n$.

\end{alphabetize}
Then there exists a sub-abelian variety $J\stackrel{\iota}{\hookrightarrow}
J_{C/K}$ defined  over $K$ 
 such that  for 
 all prime numbers  $\ell$, the correspondence $\gamma$ composed with the graph
of the inclusion $\Gamma_\iota$ induces  a split inclusion
of $\operatorname{Gal}(K)$-representations  
\begin{equation}\label{E:cycles'}
\xymatrix{
H^1(J_{\bar K},\mathbb Q_\ell) \ar@{^(->}[rr]^{\gamma_*\circ \Gamma_{\iota
*} \hspace{15pt}}& & H^{2n+1}(X_{\bar
K},\rat_\ell(n)),
}
\end{equation}
with the splitting induced by a correspondence over $K$ and with image $\coniveau^nH^{2n+1}(X_{\bar K},\rat_\ell(n))$.
In particular, if $2n+1\leq d$ and if $H^{2n+1}(X_{\mathbb C},\mathbb Q(n))$ is
of geometric
coniveau
$n$, then \eqref{E:cycles'} is an isomorphism.
\end{teo}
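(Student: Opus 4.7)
\emph{Proof proposal.} The first assertion --- the existence of $C$ and $\gamma\in \operatorname{CH}^{n+1}(C\times_K X)_{\rat}$ with $\operatorname{Im}(\gamma_*) = \coniveau^n H^{2n+1}(X_{\bar K},\rat_\ell(n))$ --- is exactly Proposition \ref{P:niveau}, once one identifies $H^1((J_{C/K})_{\bar K},\rat_\ell)$ with $H^1(C_{\bar K},\rat_\ell)$. What remains, under each of (a)--(d), is to exhibit the abelian subvariety $J$. My plan is to construct an idempotent correspondence $p\in \operatorname{CH}^1(C\times_K C)_{\rat}$ whose realization on $H^1(C_{\bar K},\rat_\ell)$ cuts out a subspace $W$ that $\gamma_*$ maps isomorphically onto $V:=\coniveau^n H^{2n+1}(X_{\bar K},\rat_\ell(n))$; the image of $p$ acting on $J_{C/K}$ will then give a sub-abelian variety $J\hookrightarrow J_{C/K}$ defined over $K$ with $H^1(J_{\bar K},\rat_\ell)\iso W$.

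Fix the class $L\in \operatorname{CH}^1(X)$ of a smooth hyperplane section of $X$ defined over $K$, and set $d:=\dim X$. Since cup product with $L^{d-2n-1}$ is an algebraic correspondence on $X\times_K X$, the operator
\[
q_*\ :=\ \gamma^t_*\circ L^{d-2n-1}\circ \gamma_*\ \colon\ H^1(C_{\bar K},\rat_\ell)\longrightarrow H^1(C_{\bar K},\rat_\ell)
\]
is itself induced by a correspondence $q\in\operatorname{CH}^1(C\times_K C)_{\rat}$. The key input is the Hodge-theoretic Lemma \ref{L:HS-Im}: via the comparison isomorphism \eqref{E:CompIsom} and the Hodge--Riemann bilinear relations for the polarizable sub-Hodge-structure $V_\cx \subseteq H^{2n+1}(X_\cx,\rat(n))$, the pairing $(\alpha,\beta)\mapsto \operatorname{tr}_X(\alpha\cup L^{d-2n-1}\beta)$ restricts to a non-degenerate form on $V$. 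Combined with the surjectivity of $\gamma_*\colon H^1(C_{\bar K},\rat_\ell)\twoheadrightarrow V$, this yields: $\gamma_*\circ\gamma^t_*\circ L^{d-2n-1}$ is an automorphism of $V$; hence $W:=\operatorname{Im}(q_*)=\gamma^t_*L^{d-2n-1}(V)$ has dimension $\dim V$, the restriction $q_*|_W$ is invertible, $\ker q_*=\ker \gamma_*$, and $H^1(C_{\bar K},\rat_\ell)=W\oplus \ker q_*$.

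From these linear-algebraic facts one manufactures $p$ as a polynomial in $q$: choose $r(T)\in \rat[T]$ with $T\,r(T)\equiv 1\pmod{m(T)}$, where $m(T)$ is the minimal polynomial of $q_*|_W$ (so $m(0)\neq 0$), and set $p := q\cdot r(q) \in \operatorname{CH}^1(C\times_K C)_{\rat}$. Then $p_*$ is the projection onto $W$ along $\ker \gamma_*$, $p^2 = p$, and taking $J$ to be the abelian subvariety of $J_{C/K}$ cut out by $p$ (defined over $K$ because $p$ is) one has $H^1(J_{\bar K},\rat_\ell)\iso W$. The composition $\gamma_*\circ\Gamma_{\iota *}$ is, by construction, an injection $H^1(J_{\bar K},\rat_\ell)\hookrightarrow H^{2n+1}(X_{\bar K},\rat_\ell(n))$ with image $V$; a $\gal(K)$-equivariant retraction is furnished by the algebraic operator $(q_*|_W)^{-1}\circ p_*\circ \gamma^t_*\circ L^{d-2n-1}$, in which $(q_*|_W)^{-1}$ is itself expressed as a polynomial in $q_*$.

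The final ``in particular'' clause is immediate: under (d) one has $V = H^{2n+1}(X_{\bar K},\rat_\ell(n))$, and so \eqref{E:cycles'} is in fact an isomorphism. The other cases (a)--(c) serve to ensure that enough operators are algebraic to run the above machine: (b) assumes the Lefschetz standard conjecture for $X$; (a) exploits the fact that this conjecture is known in degrees $\le 1$ (and Poincar\'e-dually in degrees $\ge 2d-1$), which suffices when $2n+1\le 3$; and (c) reduces to the case $n=1$ for a threefold by first applying the construction to $Y$ and then composing with $\Gamma$. The principal obstacle in the argument is the Hodge-theoretic non-degeneracy of $L^{d-2n-1}$ restricted to $V$ (Lemma \ref{L:HS-Im}); once this is granted, the rest is formal linear algebra together with the standard passage, over $K$, from an idempotent in $\operatorname{CH}^1(C\times_K C)_{\rat}$ to an abelian subvariety of $J_{C/K}$ up to isogeny.
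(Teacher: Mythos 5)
Your overall architecture is the right one and matches the paper's: reduce to Proposition \ref{P:niveau}, build an idempotent $p\in\operatorname{CH}^1(C\times_K C)_{\rat}$ by composing $\gamma$ with its transpose and an operator on $X$, invert the resulting endomorphism on its image via a Cayley--Hamilton polynomial, and take $J$ to be the image of the induced endomorphism of $J_{C/K}$. But there is a genuine gap at the one point where the hypotheses (a)--(d) are actually supposed to do work. You assert that the pairing $(\alpha,\beta)\mapsto\int_{X_\cx}\alpha\cup L^{d-2n-1}\beta$ ``restricts to a non-degenerate form on $V$'' by the Hodge--Riemann bilinear relations. That is not what the Hodge--Riemann relations give: they give definiteness of $(-1)^rQ_{2n+1}$ only on the individual primitive Lefschetz pieces $L^rH^{2n+1-2r}(X_\cx,\rat(-r))_{\mathrm{prim}}$, with a sign that alternates with $r$. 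The full cup-product form $Q_{2n+1}$ is therefore \emph{not} a polarization of $H^{2n+1}(X_\cx,\rat(n))$, and a non-degenerate but indefinite form need not restrict non-degenerately to a sub-Hodge structure: a sub-Hodge structure sitting diagonally across two Lefschetz pieces on which $Q_{2n+1}$ has opposite signs can be totally isotropic. Since $\coniveau^nH^{2n+1}$ has no reason to be compatible with the Lefschetz decomposition, your operator $q_*=\gamma^t_*\circ L^{d-2n-1}\circ\gamma_*$ need not induce an automorphism of $\operatorname{Im}(\gamma^t_*L^{d-2n-1}\gamma_*)$, the decomposition $H^1(C_{\bar K},\rat_\ell)=W\oplus\ker q_*$ can fail, and the construction of $p$ collapses. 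A symptom that something is off: your construction only ever uses $L^{d-2n-1}$, which is algebraic for every $X$ (when $2n+1\le d$), so as written your argument would prove the conclusion with no hypothesis (a)--(d) at all.

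The paper's fix is exactly Lemma \ref{L:Q'Th}: one must replace $L^{d-2n-1}$ by a correspondence $\Theta$ inducing the sign-corrected form $Q'(x,y)=\int_{X_\cx}x\cup L^{d-2n-1}s_{2n+1}y$ with $s_{2n+1}=\sum_r(-1)^rp^{2n+1,r}$, which \emph{is} a polarization (and hence, by Lemma \ref{L:HS-Im}, restricts to a polarization of every sub-Hodge structure, in particular is non-degenerate on $V$). The hypotheses (a)--(d) are precisely what make the signed Lefschetz projectors, or an adequate substitute, algebraic: in (a) with $2n+1=3$ one needs $S_3=\Delta_X-2P^{3,1}$ with $P^{3,1}=L\circ\Lambda\circ L^{d-2}$ coming from the known case of the Lefschetz standard conjecture in degree $1$; in (b) one needs all the $P^{2n+1,r}$, and also the inverse of $L^{2n+1-d}$ when $2n+1>d$ (where your $L^{d-2n-1}$ is not even a correspondence without that conjecture); in (c) one transports the polarization of $H^3(Y_\cx,\rat(1))$ through $\Gamma$ and its transpose rather than ``applying the construction to $Y$ and composing.'' Your closing remark that (a)--(c) ``ensure enough operators are algebraic'' is the correct instinct, but those operators never appear in your construction; once $\Theta$ is put in place of $L^{d-2n-1}$, the rest of your argument (the polynomial $r(T)$, the idempotent $p=q\circ r(q)$, the retraction) coincides with Lemmas \ref{L:Mid} and \ref{L:HS-Im} of the paper, up to the minor additional step of modifying $\gamma$ so that $\pi$ annihilates $H^0$ and $H^2$ of $C$ before identifying $(C,\pi)$ with $h^1$ of an abelian variety.
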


\begin{rem}
Let $X/K$ be a smooth integral projective variety.  Let $L/K$ be a finite
extension such
that $X_L$ is a
disjoint union of geometrically irreducible components, and
$\gal(L/K)$ acts transitively on these components.  If $\coniveau^nH^{2n+1}(-,\rat_\ell(n))$ of some
component of $X_L$ admits a phantom abelian variety $A$ over $L$,
then $\coniveau^nH^{2n+1}(X_{\bar{K}},\rat_\ell(n))$ admits the Weil
restriction ${\mathbf R}_{L/K}(A)$ of $A$ over $K$ as a phantom, since
$H^1({\mathbf R}_{L/K}(A)_{\bar K},\rat_\ell) \iso
\operatorname{Ind}_{\gal(L)}^{\gal(K)} H^1(A_{\bar K},\rat_\ell)$.
\end{rem}

\begin{proof}[Proof of Theorem \ref{T:cycles'}]
Proposition \ref{P:niveau} provides  
a smooth projective curve $C/K$ 
 and a correspondence $\gamma \in
\operatorname{CH}^{n+1}(C\times_K
X)_{\mathbb{Q}}$  such that the induced morphism of Hodge structures
$$
\gamma_*: H^1(C_{\mathbb{C}},{\mathbb{Q}}) \rightarrow
H^{2n+1}(X_{\mathbb{C}},{\mathbb{Q}}(n))
$$ 
has image
$\coniveau^nH^{2n+1}(X_{\mathbb{C}},{\mathbb{Q}}(n))$. This
establishes the first claim
of Theorem \ref{T:cycles'}.

In order to prove the remaining claims of Theorem \ref{T:cycles'}, we would
like   to find an abelian subvariety $J/K\hookrightarrow J_{C/K}$ so that the
composition 
$$
\xymatrix{
H^1(J_{\bar K},\mathbb Q_\ell) \ar@{^(->}[r]& H^1((J_{C/K})_{\bar K},\mathbb
Q_\ell)
\ar@{->>}[r]^<>(.5){\gamma_*} & \coniveau^nH^{2n+1}(X_{\overline K},\mathbb
Q_\ell(n))
}
$$
is an isomorphism.   At the level of rational Hodge structures and  complex
abelian varieties, this is elementary\,; the issue is to find a suitable abelian
variety
defined over $K$.    Rather than trying to directly descend an abelian variety
from $\mathbb C$ to $K$, our strategy for obtaining $J/K$ is motivic in nature
and consists in finding a
correspondence $\pi \in \operatorname{CH}^1(C\times_K C)_\rat$ which acts idempotently on $H^1(C_{\cx},\mathbb Q)$ such that 
$\pi_*H^1(C_{\cx},\mathbb Q)$ is mapped via $\gamma_*$ isomorphically
onto $\coniveau^nH^{2n+1}(X_{\cx},{\mathbb{Q}}(n))$.  
Such a correspondence $\pi$ defines a motive $(C,\pi)$, which determines an abelian
variety $J/K$\,;  more concretely, the correspondence $\pi$ induces  a $K$-morphism
$\Pi:J_{C/K}\to J_{C/K}$,
 and the image is the desired abelian variety $J/K$ with
 $H^1(J_{\overline K},\mathbb Q_\ell)$ identified with $\pi_*H^1(C_{\overline
K},\mathbb Q_\ell)$ as $\operatorname{Gal}(K)$-representations, by comparison isomorphisms.

The approach we take for finding this idempotent is motivated by the  following
well known,  
elementary lemma on  polarized Hodge structures\,; see \cite[Lem.~1.6]{Vial2} and \cite[Lem.~5]{voisinfilt}.   In the notation of
Proposition
\ref{P:niveau}, we will apply the lemma with $H=H^1(C_{\mathbb C},\mathbb Q)$, 
$H'=H^{2n+1}(X_{\mathbb C},\mathbb Q)$, and
$\gamma=\gamma_*$.

\begin{lem} \label{L:HS-Im} Let $(H,Q)$ and
  $(H',Q')$ be polarized rational Hodge structures of weight $i$ and     let
$\gamma : H
\rightarrow H'$ be a morphism of Hodge structures. Then $$H = \operatorname{Im}\, (\gamma') \oplus^{\perp_Q} \operatorname{Ker}\, (\gamma) \quad \text{and} \quad \operatorname{Im}\, (\gamma') = \operatorname{Im}\, (\gamma' \circ \gamma),$$ 
where $\gamma'$ is the transpose of $\gamma$ with respect to the polarizations\,; \emph{i.e.}, $\gamma' = Q \circ \gamma^\vee \circ Q'$ with $\gamma^\vee$ the dual of $\gamma$.\qed
\end{lem}

Going forward, we fix the standard principal polarization $Q$ on the rational
Hodge
structure $H^1(C_\mathbb{C},\mathbb{Q})$.   Recall that this is induced by the
cup-product, 
and that the identification
$H^1(C_\mathbb{C},\mathbb{Q}) = H^1(C_\mathbb{C},\mathbb{Q})^\vee \otimes
\mathbb{Q}(-1)$ via $Q$ is given by the action of the diagonal $\Delta_C \in
\operatorname{CH}^1(C\times C)_\mathbb{Q}$ on $H^1(C_\mathbb{C},\mathbb{Q})$.
Recall also that the dual morphism 
$$ 
(\gamma_*)^\vee :
H^{2n+1}(X_{\mathbb{C}},{\mathbb{Q}}(n))^\vee \rightarrow
H^{1}(C_{\mathbb{C}},{\mathbb{Q}})^\vee
$$
is identified via Poincar\'e duality  with the morphism of Hodge structures  
$$
 {}^t\gamma_*=\gamma^* : H^{2d-2n-1}(X_{\mathbb{C}},{\mathbb{Q}}(d-n))
\rightarrow
H^{1}(C_{\mathbb{C}},{\mathbb{Q}}(1)),
$$ 
where ${}^t\gamma$ is the transpose of $\gamma$ in
$\operatorname{CH}^{n+1}(X\times_KC)$.    We now show that  Theorem \ref{T:cycles'} will
follow if we can endow $H' = H^{2n+1}(X_{\mathbb{C}},{\mathbb{Q}}(n))$ with a polarization $Q'$ that is induced by  a correspondence defined over $K$.

 \begin{lem}\label{L:Mid}
Let $X$, $C$ and $\gamma$ be as in Proposition \ref{P:niveau}.
 Let $Q$ be the
standard polarization on $H^1(C_{\mathbb C},\mathbb Q)$.  If there are
 a
polarization  $Q'$ of  $H^{2n+1}(X_{\mathbb C},\mathbb Q(n))$ and a
correspondence
$\Theta\in  \operatorname{CH}^{2d-2n-1}(X\times_K X)_\mathbb{Q}$ making the
following diagram commute\,:
\begin{equation}\label{E:ThetaD}
\xymatrix@C=1em@R=2em{
H^{2n+1}(X_{\mathbb C})(n) \ar@{->}[r]^<>(.5){Q'} \ar@{=}[d] &
H^{2n+1}(X_{\mathbb C})(n)^\vee(-1) \ar[r]^<>(.5){(\gamma_*)^\vee}
\ar@{<->}[d]_{PD}& H^1(C_{\mathbb C})^\vee(-1) \ar[r]^<>(.5)Q \ar@{<->}[d]_{PD}&
H^1(C_{\mathbb C})\ar[r]^<>(.5){\gamma_*}& H^{2n+1}(X_\cx)(n)\\
H^{2n+1}(X_{\mathbb C})(n)\ar@{->}[r]^<>(.5){\Theta_*}& H^{2d-2n-1}(X_{\mathbb
C})(d-n) \ar[r]^<>(.5){{}^t\gamma_*}& H^1(C_{\mathbb C}) \ar@{=}[ru]& & \\
}
\end{equation}
then
 there exists an abelian subvariety $J\subseteq J_{C/K}$ defined over $K$,  and a correspondence defined over $K$, that induces for 
 all prime numbers  $\ell$
  a \emph{split} inclusion of $\operatorname{Gal}(K)$-representations  
\begin{equation}\label{E:cycles1}
H^1(J_{\bar K},\rat_\ell)\hookrightarrow H^{2n+1}(X_{\bar K},\rat_\ell(n)),
\end{equation}
with image $\coniveau^nH^{2n+1}(X_{\bar K},\rat_\ell(n))$. Moreover, there is a correspondence over $K$ inducing for all prime $\ell$ a splitting for \eqref{E:cycles1}.
\end{lem}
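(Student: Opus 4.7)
The plan is to construct an idempotent correspondence $\pi\in \operatorname{CH}^1(C\times_K C)_\rat$ defined over $K$ whose action on $H^1(C_\cx,\rat)$ is the projection onto $\operatorname{Im}(\gamma')$ furnished by Lemma \ref{L:HS-Im}, and then take $J$ to be the image of the associated endomorphism of $J_{C/K}$. By the commutativity of \eqref{E:ThetaD}, the morphism $\gamma'=Q\circ (\gamma_*)^\vee\circ Q' : H^{2n+1}(X_\cx)(n)\to H^1(C_\cx,\rat)$ coincides with ${}^t\gamma_*\circ \Theta_*$ under Poincar\'e duality. Consequently the composite $\psi:=\gamma'\circ\gamma: H^1(C_\cx,\rat)\to H^1(C_\cx,\rat)$ is induced by the correspondence $\psi_C:={}^t\gamma\circ\Theta\circ\gamma\in \operatorname{CH}^1(C\times_K C)_\rat$, which is defined over $K$ since $\gamma$ and $\Theta$ are.

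By Lemma \ref{L:HS-Im} one has a direct sum decomposition $H^1(C_\cx,\rat)=\operatorname{Ker}(\gamma)\oplus \operatorname{Im}(\gamma')$ of rational Hodge structures, and $\psi$ vanishes on the first summand while restricting to an automorphism of the second. Hence the minimal polynomial of $\psi$ factors as $T\cdot g(T)$ with $g(0)\neq 0$; by B\'ezout there exist $a(T),b(T)\in \rat[T]$ with $1=T\cdot a(T)+g(T)\cdot b(T)$, and setting $P(T):=T\cdot a(T)$, the operator $P(\psi)$ is precisely the projection onto $\operatorname{Im}(\gamma')$ along $\operatorname{Ker}(\gamma)$. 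Since self-composition of correspondences in $\operatorname{CH}^1(C\times_K C)_\rat$ stays in codimension one, the expression
$$\pi:= P(\psi_C)=\psi_C\circ a(\psi_C)\in \operatorname{CH}^1(C\times_K C)_\rat,$$
interpreted via correspondence composition with the diagonal $\Delta_C$ playing the role of the identity, defines a $K$-correspondence whose action on Betti cohomology is $P(\psi)$ and, by the comparison isomorphism \eqref{E:CompIsom}, whose action on $H^1(C_{\bar K},\rat_\ell)$ is the corresponding idempotent.

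The correspondence $\pi$ induces an endomorphism $\varphi:J_{C/K}\to J_{C/K}$ over $K$; set $J:=\operatorname{Im}(\varphi)\subseteq J_{C/K}$, an abelian subvariety defined over $K$. Because $\pi_*$ is idempotent on $\ell$-adic cohomology, the inclusion $\iota: J\hookrightarrow J_{C/K}$ induces a $\gal(K)$-equivariant identification $H^1(J_{\bar K},\rat_\ell)=\pi_* H^1(C_{\bar K},\rat_\ell)$, which corresponds under the comparison isomorphism to $\operatorname{Im}(\gamma'_*)\otimes\rat_\ell$. Lemma \ref{L:HS-Im} then ensures that $\gamma_*$ restricts to an isomorphism of this subspace onto $\operatorname{Im}(\gamma_*)=\coniveau^n H^{2n+1}(X_{\bar K},\rat_\ell(n))$, the last equality being Proposition \ref{P:niveau}; as all maps are induced by $K$-correspondences, the inclusion \eqref{E:cycles1} is $\gal(K)$-equivariant. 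Applying the same polynomial trick to the decomposition $H^{2n+1}(X_\cx)(n)=\operatorname{Im}(\gamma)\oplus \operatorname{Ker}(\gamma')$ of Lemma \ref{L:HS-Im} produces a $K$-correspondence in $\operatorname{CH}^d(X\times_K X)_\rat$ built from $\gamma\circ{}^t\gamma\circ\Theta$, whose action on $H^{2n+1}(X_{\bar K},\rat_\ell(n))$ is the projection onto $\coniveau^n H^{2n+1}(X_{\bar K},\rat_\ell(n))$. This provides the required $\gal(K)$-equivariant retraction and exhibits the inclusion as split.

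The main subtlety is realizing the projection onto $\operatorname{Im}(\gamma')$ as a \emph{polynomial} in $\psi$, so that it lifts to a correspondence defined over the base field $K$ rather than merely over $\cx$ or $\bar K$; this relies on Lemma \ref{L:HS-Im}, which guarantees that $\psi$ has no nilpotent part beyond that arising from $\operatorname{Ker}(\gamma)$, so that the minimal polynomial has a simple root at $0$. A secondary technical point is verifying that $J=\operatorname{Im}(\varphi)$ has $\ell$-adic first cohomology equal to $\pi_* H^1(C_{\bar K},\rat_\ell)$; this follows from the standard dictionary between divisor-class correspondences on a curve and endomorphisms of its Jacobian.
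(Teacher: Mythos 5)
Your proposal is correct and follows essentially the same route as the paper: both construct the idempotent $\pi$ as a polynomial with vanishing constant term in the correspondence ${}^t\gamma\circ\Theta\circ\gamma$ (you via B\'ezout applied to the minimal polynomial, the paper via Cayley--Hamilton to invert $\gamma_*'\circ\gamma_*$ on $\operatorname{Im}(\gamma_*')$), and both take $J$ to be the image of the induced $K$-endomorphism of $J_{C/K}$. The only cosmetic differences are that the paper additionally normalizes $\gamma$ to kill $H^0$ and $H^2$ of $C$ so as to identify $(C,\pi)$ motivically, and obtains the splitting directly as $\varphi_*$ from the factorization $\pi=\varphi\circ\gamma$ rather than via a projector on $H^{2n+1}(X_{\bar K},\rat_\ell(n))$; neither difference affects correctness.
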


\begin{proof} As discussed above, up to the statement involving a splitting  for \eqref{E:cycles1}, it is enough to exhibit a correspondence $\pi \in \operatorname{CH}^1(C\times_K C)_{\rat}$ which acts idempotently on $H^1(C_{\cx},\mathbb Q)$ such that 
 $\pi_*H^1(C_{\cx},\mathbb Q)$ is mapped via $\gamma_*$ isomorphically
 onto $\operatorname{Im}\,(\gamma_*) = \coniveau^nH^{2n+1}(X_{\cx},{\mathbb{Q}}(n))$.  
Following the notation of Lemma \ref{L:HS-Im}, set $\gamma_*'=Q\circ
(\gamma_*)^\vee \circ Q'$, and set $p:H^1(C_{\mathbb C},\mathbb Q)\to
\operatorname{Im}\,(\gamma_*')$ to be the orthogonal projection.  
Consider the endomorphism
$$
\gamma_*'\circ \gamma_* = {}^t\gamma_*\circ \Theta_* \circ
\gamma_* : H^1(C_\mathbb{C},\mathbb{Q}) \rightarrow
H^1(C_\mathbb{C},\mathbb{Q}).
$$
From Lemma \ref{L:HS-Im}, the endomorphism  $\gamma_*'\circ \gamma_*$ induces an
automorphism of $\operatorname{Im}(\gamma_*')$.  
 By the Cayley--Hamilton theorem, there is a polynomial $P$ with
$\mathbb{Q}$-coefficients such that the inverse of
$(\gamma_*'\circ \gamma_*)|_{\operatorname{Im}(\gamma_*')}$ is
$P((\gamma_*'\circ \gamma_*)|_{\operatorname{Im}(\gamma_*')})$. It is then
apparent that $p$ is
induced by 
\begin{equation} \label{E:pi}
\pi := {}^t\gamma \circ \Theta \circ \gamma  \circ P({}^t\gamma
\circ \Theta \circ \gamma) \in \operatorname{CH}^1(C\times_K C)_\rat,
\end{equation}
since by construction it acts as the identity on $\operatorname{Im}(\gamma_*')$
and as zero on $\operatorname{Ker}(\gamma_*)$. That $\operatorname{Im}\,(\gamma \circ \pi)_*= \operatorname{Im}\,(\gamma_*)$ follows from Lemma \ref{L:HS-Im}.

Finally, concerning the splitting for \eqref{E:cycles1}, writing the correspondence $\pi$ of formula \eqref{E:pi} as $\pi =
\varphi \circ \gamma$, we see that a $\operatorname{Gal}(K)$-equivariant
splitting to the inclusion $$\gamma_* : H^1(J_{\overline K},{\mathbb{Q}}_\ell)
\cong \pi_*H^1(C_{\bar K},\rat_\ell)\hookrightarrow H^{2n+1}(X_{\bar
K},\rat_\ell(n))$$ is given by the action $\varphi_*$ of $\varphi$.
\end{proof}

Up to proving Lemma \ref{L:Q'Th}, the proof of Theorem \ref{T:cycles'} is now complete.
\end{proof}

\begin{rem}\label{R:Eq-motives} 
In motivic terms, we have proved the following. With the notations of \cite{Andre}, denote $h^1(J)$ the weight-$1$ direct summand of the homological motive of $J$, and  $h(X)$ the homological motive of $X$. We proved that the
 morphism of motives $$h^1(J) \cong (C,\pi) \stackrel{\gamma\circ \pi}{\longrightarrow}
 h(X)(n)$$ is split injective with a splitting given by the correspondence $\pi
 \circ \varphi : h(X)(n) \to (C,\pi)$, and that the cohomology of
$\operatorname{Im}\,(\gamma \circ \pi)$ is $\coniveau^n H^{2n+1}(X_{\bar
  K},\rat_\ell(n))$.
\end{rem}

\begin{lem} \label{L:Q'Th}
Let $X$ be a smooth projective variety of pure dimension
$d$ over $K\subseteq \cx$.  Assume any of (a)--(d) of Theorem \ref{T:cycles'}.
 Then there is a correspondence $\Theta \in \operatorname{CH}^{d-2n-1}(X\times_K
X)$ such that the pairing $(x,y)\mapsto \int_{X_\cx} x \cup \Theta_* y$ defines
a polarization $Q'$ on $H^{2n+1}(X_\cx,\rat(n))$\,; \emph{i.e}., such that
\eqref{E:ThetaD} is commutative.
\end{lem}

\begin{proof}
Let us first recall how to polarize the Hodge structure $H^n(X_\cx,\rat)$ using
the Lefschetz decomposition formula, for any smooth projective variety $X$ of
pure dimension $d$.
 We
fix the class $L$ of a smooth hyperplane section of $X$. By the Lefschetz
hyperplane theorem, the powers of $L$ induce isomorphisms $L^{d-n} :
H^n(X_\cx,\rat) \rightarrow H^{2d-n}(X_\cx,\rat(d-n))$ for $n \leq d$.
By Poincar\'e duality, the bilinear form $Q_n$ on $H^n(X_\cx,\rat)$ given by
$$Q_n(x,y) = \int_{X_\cx} x\cup L^{d-n} y$$ is non-degenerate.
By definition, for $n\leq d$, the primitive part of $H^n(X_\cx,\rat)$ is
$$H^n(X_\cx,\rat)_{\mathrm{prim}} := \operatorname{Ker} \big(L^{d-n+1} :
H^n(X_\cx,\rat) \rightarrow
H^{2d-n+2}(X_\cx,\rat)\big).$$ We have the Lefschetz decomposition formula
\begin{equation}
\label{E:Ldecomp}
H^n(X_\cx,\rat) = \bigoplus_{0 \leq r} L^r
H^{n-2r}(X_\cx,\rat(-r))_{\mathrm{prim}}.
\end{equation} The Hodge index theorem states that
decomposition \eqref{E:Ldecomp} is orthogonal for $Q_n$ and that the sub-Hodge
structure $L^r
H^{n-2r}(X_\cx,\rat(-r))_{\mathrm{prim}}$ endowed with the form
$(-1)^rQ_n$ is polarized. We let $p^{n,r}$ denote the orthogonal
projector
$$H^n(X_\cx,\rat) \rightarrow L^r
H^{n-2r}(X_\cx,\rat(-r))_{\mathrm{prim}}  \rightarrow H^n(X_\cx,\rat),$$
and we define $$s_n := \sum_r (-1)^r p^{n,r} \quad \mbox{for } n\leq d, \quad
\mbox{and} \quad s_n := \sum_r (-1)^r\, {}^tp^{2d-n,r} \quad \mbox{for } n> d.$$
With these notations, the bilinear form $Q'$ on $H^n(X_\cx,\rat)$ given by
$Q'(x,y) = \int_{X_\cx} x\cup L^{d-n}s_n y$ is a polarization. (Here, $L^r$ for
$r<0$ is defined to be the inverse of $L^{-r}$.)

Let us now consider the polarized Hodge structure
$H^{2n+1}(X_\cx,\rat(n))$ and
proceed to the proof of Lemma~\ref{L:Q'Th}.

(a) If $2n+1=1$, then note that $H^{1}(X_\cx,\rat) =
H^{1}(X_\cx,\rat)_{\mathrm{prim}}$ so that $s_1$ is induced by the diagonal
$\Delta_X \in \operatorname{CH}^d(X\times X)_{\rat}$. Therefore $\Theta =
L^{n-1}$ is suitable. (In fact, in the case $n=0$, Theorem \ref{T:cycles'} holds
with $J=\operatorname{Pic}^0_X$.) 

If $2n+1=3$, let us write $\Lambda \in \operatorname{CH}^1(X\times X)_\rat$ for
a
correspondence that induces the inverse to the Lefschetz isomorphism $L^{d-1} :
H^{1}(X_\cx,\rat) \stackrel{\simeq}{\rightarrow} H^{2d-1}(X_\cx,\rat(d-1))$
(Such a correspondence $\Lambda$ does exist\,; see for instance the proof of
Proposition
\ref{P:niveau}.)
We define $$P^{3,1} := L\circ \Lambda \circ L^{d-2} \in
\operatorname{CH}^d(X\times X)_\rat.$$ It is clear that $P^{3,1}$ acts as
$p^{3,1}$ on $H^3(X_\cx,\rat(1))$\,; we may then define $P^{3,0} = \Delta_X -
P^{3,1}$, and $S_3 := P^{3,0} - P^{3,1} \in \operatorname{CH}^d(X\times X)_\rat$
acts on $H^3(X_\cx,\rat(1))$ as $s_3$. Hence $\Theta := L^{d-2} \circ S_3$ is
suitable.

(b) If $X$ satisfies the Lefschetz standard conjecture, then Kleiman
\cite{kleiman} has shown that the projectors $p^{2n+1,r}$ are induced by
correspondences $P^{2n+1,r} \in \operatorname{CH}^d(X\times X)_\rat$. Therefore,
denoting $\Lambda \in \operatorname{CH}^{2d-2n-1}(X\times X)_\rat$ either
the correspondence $L^{d-2n-1}$ if $2n+1 \leq d$ or a correspondence inducing
the inverse of the
action of $L^{2n+1-d}$ on $H^{2d-2n-1}(X_\cx,\rat(d-n-1))$ if $2n+1 > d$, we get
that  $\Theta := \Lambda \circ \big( \sum_r (-1)^rP^{2n+1,r}\big)$ is
suitable.

(c) Let $Y$ be a smooth projective threefold, 
 and let
$\Gamma \in
\operatorname{CH}^{n+2}(Y\times_K
X)_\rat$ be such that $$\Gamma_* : H^3(Y_\cx,\rat(1)) \rightarrow
H^{2n+1}(X_\cx,\rat(n))$$ is surjective. The polarization on
$H^3(Y_\cx,\rat(1))$ coming from the Lefschetz decomposition
induces a polarization on $H^{2n+1}(X_\cx,\rat(n))$ and we are going to show
that this can be done algebro-geometrically. By (a), there is a correspondence
$S_Y
\in \operatorname{CH}^3(Y\times Y)_\rat$ such that the pairing $(x,y ) \mapsto
\int_{Y_\cx} x \cup (S_Y)_*y$ defines a polarization on $H^{3}(Y_\cx,\rat(1))$.
By Lemma \ref{L:HS-Im}, we get a decomposition 
$$H^{3}(Y_\cx,\rat(1)) = \operatorname{Im}((S_Y)_* \circ \Gamma^* \circ
L^{d-2n-1}) \oplus \operatorname{Ker}(\Gamma_*).$$
 It follows that $\Gamma \circ S_Y \circ {}^t\Gamma \circ L^{d-2n-1}$ induces an
automorphism of $H^{2n+1}(X_\cx,\rat(n))$. 
Consider the correspondence
 $$\Gamma' := S_Y \circ {}^t\Gamma \circ L^{d-2n-1} \in
\operatorname{CH}_{n+2}(X\times
Y)_\rat.$$
The bilinear form $$Q'(x,y) := \int_{Y_\cx} (\Gamma')_*x \cup
(S_Y)_*(\Gamma')_*y = \int_{X_\cx} x \cup (\Gamma')^*(S_Y)_*(\Gamma')_*y$$ then
defines a polarization on
$H^{2n+1}(X_\cx,\rat(n))$. The correspondence $\Theta := {}^t\Gamma' \circ S_Y
\circ \Gamma'$ is then suitable.

(d)  is a special case of (c) because if $H^{2n+1}(X_\cx,\rat(n))$ is of
geometric coniveau $n$, then by Proposition \ref{P:niveau},
$H^{2n+1}(X_\cx,\rat(n))$ is of geometric niveau $n$ (meaning that there is a
 smooth projective curve $C$ over $K$ and a correspondence $\gamma \in
\operatorname{CH}^{n+1}(C\times_K X)$ such that $\gamma_* : H^1(C_\cx,\rat) \to
H^{2n+1}(X_\cx,\rat(n))$ is surjective) and hence of geometric niveau $n-1$.
\end{proof}

\section{Algebraic representatives over algebraically closed fields} \label{S:c2c}

In this section we review Murre's work \cite{murre83} on algebraic representatives over algebraically closed fields. The theory of algebraic representatives is indeed the starting point to showing in Theorem \ref{T:cycles3'} that the image of the Abel--Jacobi map  $AJ:\operatorname{A}^{2}(X_{\mathbb C})\to
 J^{3}(X_{\mathbb C})$
descends to a field of definition of $X$.  
In \S \ref{S:prelbasechange}, we recall the functorial properties  of Chow and Albanese schemes under  field extension, and
similar results for homomorphisms of abelian varieties. This is used in \S
\ref{S:bc1} to  establish algebraically closed base change for algebraic
representatives\,; see our main Theorem \ref{T:ACBC}.  \linebreak

\subsection{Preliminaries on Chow groups and algebraic
 representatives}\label{S:ChGAb}

Fix an algebraically closed field $k$.  
Recall following Samuel (see \cite[Def.~1.6.1]{murre83} or
\cite[2.5]{samuelICM})
 that given an abelian variety $A$ over $k$, a
homomorphism of groups
$$
\begin{CD}
\operatorname{A}^i(X)@>\phi>> A(k)
\end{CD}
$$
is said to be \emph{regular} if for every pair $(T,Z)$ with $T$ a  pointed
smooth integral  
variety,
and
$Z\in \operatorname{CH}^i(T\times X)$,  
the composition 
$$
\begin{CD}
T(k)@> w_Z >> \operatorname{A}^i(X)@>\phi >>A(k)
\end{CD}
$$ 
is induced by a morphism of varieties $\psi_Z:T\to A$ over $k$, where, if $t_0\in T(k)$
is
the base point of $T$,  $w_Z:T(k)\to \operatorname{A}^i(X)$ is   given by
$t\mapsto Z_t-Z_{t_0}$\,; here $Z_t$ is the refined Gysin fiber (see e.g.,
\cite[Ch.10]{fulton}).

Given a \emph{surjective} regular homomorphism $\phi :
\operatorname{A}^i(X) \rightarrow A(k)$, there exists a correspondence
 $Z \in
\operatorname{CH}^i(A\times X)$ such that $\phi \circ w_Z$ is an isogeny (e.g.,
\cite[Cor.~1.6.3]{murre83}\,; see also Lemma \ref{L:M-1.6.2}).

\begin{dfn}\label{D:AlgRep}
 An \emph{algebraic representative} for $\operatorname{A}^i(X)$ is an abelian
 variety that is initial for regular homomorphisms.  In other words, it is a pair
 $(\operatorname{Ab}^i(X),\phi_X^i)$  with $\operatorname{Ab}^i(X)$ an abelian
 variety over $k$, and 
 $$
 \xymatrix{
  \operatorname{A}^i(X)\ar@{->}[rr]^{\phi_X^i} && \operatorname{Ab}^i(X)(k)\\
 }
 $$
 a regular homomorphism of groups such that for every  pair $(A,\phi)$
 consisting of an abelian variety $A$ over $k$ and a regular homomorphism of
 groups 
 $\phi:\operatorname{A}^i(X)\to A(k)$, there exists  a unique morphism
 $f:\operatorname{Ab}^i(X)\to A$ of varieties over $k$ such that the following
 diagram commutes
 $$
 \xymatrix@C=.3cm@R=.5cm{
  \operatorname{A}^i(X)\ar@{->}[rr]^<>(0.5){\phi_X^i}
  \ar@{->}[rd]_{\phi}&&\ar@{-->}[ld]^{f(k)}_<>(0.6){\exists
   !}\operatorname{Ab}^i(X)(k).\\
  & A(k)& \\
 }
 $$
\end{dfn} 
Algebraic representatives, if they exist, are unique up to unique isomorphism.  
For an algebraic representative, the homomorphism $\phi_X^i$ is surjective
(e.g., \cite[Rem.~a), p.225]{murre83}).
It is well known that  algebraic representatives of $\A^1(X)$ and $\A^d(X)$ (where $d$ is the dimension of $X$) do exist\,:  the Abel--Jacobi map taking a divisor
to its associated line bundle
$A^1(X) \to \operatorname{Pic}^0(X)(k)$ and  the Albanese map $A^d(X) \to
\operatorname{Alb}(X)(k)$ are universal surjective regular homomorphisms.

We will frequently use the following necessary and sufficient condition for the existence of an
algebraic representative.

\begin{pro}[{\cite[Thm.~2.2]{hsaito}, \cite[Prop.~2.1]{murre83}}] \label{P:saito}
 An algebraic representative for $\operatorname{A}^i(X)$ exists if and only if there exists a constant $M$ such that for every surjective regular homomorphism $\phi : \operatorname{A}^i(X) \twoheadrightarrow A(k)$ we have $\dim_k A \leq M$.
\end{pro}

\begin{rem}
 Note that the notions of regular homomorphism and of algebraic representative
 are usually defined for smooth projective \emph{irreducible} varieties over an
 algebraically closed field $k$. However it is harmless to
 consider 
 possibly disconnected smooth projective varieties\,: for instance, 
 a smooth projective variety $X$ over  $k$ admits an algebraic representative $(\operatorname{Ab}^i(X),\phi^i_X)$
 for $\operatorname{A}^i(X)$ if and only if each of its irreducible components $X_1,\ldots,X_n$ admits an algebraic
 representative $(\operatorname{Ab}^i(X_j),\phi^i_{X_j})$ for
 $\operatorname{A}^i(X_j)$, $j=1,\ldots,n$. In this case, 
 $(\operatorname{Ab}^i(X),\phi^i_X)\cong
 \bigoplus_{j=1}^n(\operatorname{Ab}^i(X_j),\phi^i_{X_j})$.
\end{rem}

\subsection{Results of Murre on codimension two  cycles}\label{S:M-BS}

We recall  Murre's main result concerning algebraic representatives for codimension two cycles.

\begin{teo}[{\cite[Thm.~A, p.226, Thm.~C,
  p.229, Thm.~10.3, p.258]{murre83}}] \label{T:Murre} Let $X$ be a smooth
 projective 
 variety over an algebraically closed field $k$. 
 \begin{alphabetize}
  \item  There exists an
  algebraic representative $(\operatorname{Ab}^2(X),\phi^2_X)$ of
  $\operatorname A^2(X)$.  
  \item For all but finitely many primes
  $\ell \ne \operatorname{char} k$, there is
  a natural inclusion
  \begin{equation}\label{E:MT1}
  H^1(\widehat{\Ab^2(X)},\integ_\ell) \hookrightarrow  H^3(X,\integ_\ell(1)).
  \end{equation}
  
  \item For all primes $\ell \ne \operatorname{char} k$, there is a
  natural inclusion 
  \begin{equation}\label{E:MT1.5}
  H^1(\widehat{\Ab^2(X)},\rat_\ell) \hookrightarrow H^3(X,\rat_\ell(1)).
  \end{equation}

  \item If $k=\mathbb C$, then an algebraic representative is given by
  the image of the Abel--Jacobi map.  In other words, the pair
  $(J_{\operatorname{a}}^3(X),\psi^2)$, where $$\psi^2:\operatorname
  A^2(X)\to J^3(X)$$ is the Abel--Jacobi map, and
  $J_{\operatorname{a}}^3(X)=\operatorname{Im}(\psi^2)$, is an
  algebraic representative of $\operatorname A^2(X)$.  For {\em each }
  $\ell$, the Abel--Jacobi map $\psi^2$ is an injection on torsion, so that 
  there is an isomorphism\,:
  \begin{equation}\label{E:MT2}
  \xymatrix@C=3em{
   T_\ell \operatorname{A}^2(X) \ar@{->}[r]^{T_\ell \psi^2}_\cong 
   & T_\ell J_{\operatorname{a}}^3(X).\\
  }
  \end{equation}
 \end{alphabetize}
\end{teo}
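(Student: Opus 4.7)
The plan is to treat the four parts in sequence, exploiting the framework of regular homomorphisms from \S\ref{S:ChGAb} together with Propositions~\ref{P:algcycles} and \ref{P:niveau}. For (a), I would establish existence by the standard dimension-bound-and-maximality strategy. Given a surjective regular homomorphism $\phi : \operatorname{A}^2(X) \to A(k)$, Proposition~\ref{P:algcycles} expresses every algebraically trivial class as $Z_{t_1} - Z_{t_0}$ for some smooth projective $T$ and cycle $Z \in \operatorname{CH}^2(T \times X)$. Regularity forces $\phi \circ w_Z$ to come from a morphism $T \to A$, and by enlarging the parameter space this morphism can be taken to be surjective, so that $A$ is a quotient of $\operatorname{Alb}(T)$. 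Cutting $T$ down by successive hyperplane sections and using the Lefschetz hyperplane theorem, I reduce to the case where $T$ is a smooth projective curve $C$ with $\operatorname{Pic}^0(C) \twoheadrightarrow A$. Feeding this construction through the cycle class map and using Proposition~\ref{P:niveau} applied to $H^3(X,\rat_\ell(1))$ produces an injection $H^1(\widehat A,\rat_\ell) \hookrightarrow H^3(X,\rat_\ell(1))$, giving the uniform bound $2\dim A \leq \dim_{\rat_\ell} H^3(X,\rat_\ell(1))$. A target of maximal dimension then yields $(\operatorname{Ab}^2(X), \phi^2_X)$, and universality follows from the usual ``take the image in a product'' argument.

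For parts (b) and (c), I would invoke the universal property to obtain a correspondence $Z \in \operatorname{CH}^2(\operatorname{Ab}^2(X) \times_k X)$ for which $\phi^2_X \circ w_Z$ is an isogeny of $\operatorname{Ab}^2(X)$. The cycle class of $Z$, combined with Poincar\'e duality on $\operatorname{Ab}^2(X)$, induces a homomorphism $H^1(\widehat{\operatorname{Ab}^2(X)}, \rat_\ell) \to H^3(X, \rat_\ell(1))$. The isogeny condition, together with the compatibility of the cycle class map with composition of correspondences, forces this map to be rationally injective (any element of the kernel would produce, after tensoring with $\rat_\ell$, a nontrivial element in the kernel of the isogeny), giving (c). For (b), the integral version of this map fits in an exact sequence whose failure of injectivity is controlled by a finitely generated group, so integral injectivity can fail only at the finitely many primes $\ell$ at which this cokernel has $\ell$-torsion; this gives the ``almost all $\ell$'' statement.

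For (d), the main geometric input is Griffiths' theorem that the Abel--Jacobi map $\psi^2: \operatorname{A}^2(X) \to J^3(X)$ is regular in the sense of Samuel, so that its image $J^3_a(X)$ is a complex abelian variety. To verify the universal property, given a regular $\phi : \operatorname{A}^2(X) \to A(\cx)$, Proposition~\ref{P:algcycles} and Lefschetz cutting reduce the problem to factoring $\phi$ through the Jacobian of a curve, for which the compatibility with the Abel--Jacobi map is classical. The final claim, that $T_\ell \psi^2$ is an isomorphism for every prime $\ell$, is equivalent to the injectivity of $\psi^2$ on $\ell$-power torsion. I expect this to be the main obstacle of the proof: it is the content of Bloch's theorem on torsion cycles of codimension two, which depends crucially on the Merkurjev--Suslin theorem identifying $K_2$ (modulo $\ell$) with Galois cohomology and on Bloch's analysis of the Bloch--Ogus spectral sequence. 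By contrast the other three parts reduce to now-standard correspondence-theoretic manipulations once Propositions~\ref{P:algcycles} and \ref{P:niveau} are available.
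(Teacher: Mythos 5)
The first thing to say is that the paper does not reprove this theorem: it is imported wholesale from Murre, and the only argument the paper supplies is Remark \ref{R:MuMaps}, which explains how the inclusions (b) and (c) (not stated explicitly by Murre) are extracted from his proof that the second Bloch map is injective, via the chain $H^1(\widehat{\Ab^2(X)},\integ_\ell(1))=T_\ell\Ab^2(X)\subseteq T_\ell\A^2(X)\subseteq T_\ell\chow^2(X)\hookrightarrow H^3(X,\integ_\ell(2))$. The last inclusion comes from the Bloch--Ogus/$\mathcal K_2$ diagram, the Merkurjev--Suslin theorem, and Bloch's lemma that $\operatorname{Ker}\alpha$ maps to torsion; the exceptional primes in (b) are those dividing the degree of the isogeny $\phi^2_X\circ w_Z$ and those at which $H^3(X,\integ_\ell(2))$ has torsion (finitely many, by Gabber).

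The genuine gap in your proposal is that you have mislocated this input: you confine Merkurjev--Suslin to the torsion statement in (d) and describe (a)--(c) as ``standard correspondence-theoretic manipulations,'' whereas the injectivity of the $\ell$-adic Bloch map $T_\ell\chow^2(X)\hookrightarrow H^3(X,\integ_\ell(2))$ is precisely what makes (a), (b) and (c) work. Concretely, in (a) your uniform bound $2\dim A\le\dim_{\rat_\ell}H^3(X,\rat_\ell(1))$ requires the map $H^1(\widehat A,\rat_\ell)\to H^3(X,\rat_\ell(1))$ induced by the cycle $Z$ to be \emph{injective}. The only handle you have on $Z$ is that $\phi\circ w_Z$ is an isogeny of $A$; but $\phi$ is an abstract regular homomorphism on a Chow group with no a priori cohomological realization, so this composite cannot be read off in cohomology, and there is no correspondence-induced retraction $H^3(X,\rat_\ell(1))\to H^1(\widehat A,\rat_\ell)$ to certify injectivity. (Proposition \ref{P:niveau} does not supply one: it gives a surjection from $H^1$ of a curve onto $\coniveau^1H^3$, not an injection from $H^1(\widehat A)$.) The same problem undercuts your argument for (b)/(c): the claim that a kernel element of $Z_*$ ``would produce a nontrivial element in the kernel of the isogeny'' presupposes that the isogeny $\phi^2_X\circ w_Z$ factors through $H^3(X,\integ_\ell(2))$ on Tate modules, and that factorization \emph{is} the Bloch map together with its injectivity. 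Over $\cx$ one can sidestep this with the intermediate Jacobian (this is exactly how the paper's Theorem \ref{T:cycles3'} operates), but over a general algebraically closed field there is no transcendental substitute and the $K$-theoretic argument is unavoidable. Finally, in (d) the universality of $(J^3_a(X),\psi^2)$ is also under-justified: one must show that every regular $\phi$ kills $\operatorname{Ker}\psi^2$, which Murre deduces by comparing Tate modules with $\Ab^2(X)$ --- again via the same injectivity.
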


\begin{rem}\label{R:MuMaps}
 The inclusions \eqref{E:MT1} and \eqref{E:MT1.5} are not stated explicitly in
 \cite{murre83}\,; the argument is contained in Murre's argument that
 the second Bloch map is injective \cite[Prop.~9.2, p.256]{murre83}.
 More precisely, \cite[(18), p.250]{murre83} establishes that for
 almost all $\ell$ there is an inclusion
 \begin{equation}\label{E:degWZ}
 H^1(\widehat {\Ab^2(X)},\mathbb Z_\ell(1))=H^1(\Ab^2(X),\mathbb
 Z_\ell)^\vee=T_\ell\Ab^2(X)\subseteq
 T_\ell\operatorname{A}^2(X)\subseteq T_\ell\operatorname{CH}^2(X).
 \end{equation}
 (For the finitely many exceptional $\ell \not = \operatorname{char}
 k$, Murre's argument shows that there is still an inclusion of {\em
  rational} Tate modules $H^1(\widehat{\Ab^2(X)},\rat_\ell(1)) = V_\ell
 \Ab^2(X) \subseteq V_\ell  \chow^2(X)$.)
 Then, Murre's argument that the second Bloch map is injective shows that for
 almost all $\ell$ there is also an inclusion\,:
 \begin{equation}\label{E:ChtoH3}
 T_\ell \operatorname{CH}^2(X)\hookrightarrow H^{3}(X,\mathbb
 Z_\ell(2)).
 \end{equation}
 (This may fail to be an inclusion at the primes for which
 $H^{3}(X,\mathbb Z_\ell(2))$  has torsion\,; for cohomology
 with $\mathbb Q_\ell$-coefficients, this does not pose a problem, and one
 obtains the inclusion for all $\ell$.)
 The inclusion \eqref{E:ChtoH3} is not explicitly stated in \cite{murre83} but follows immediately from  \cite[(19)]{murre83} and \cite[Lem.~2.4]{bloch79}, together with the fact that  $H^{3}(X,\mathbb Z_\ell(2))$
 is torsion-free for almost all $\ell$ by a theorem of Gabber
 \cite{gabbertorsion}.
\end{rem}

\begin{rem}
 We can be more precise about the primes $\ell$ for which the argument above does
 not
 establish the inclusion \eqref{E:MT1}.   By \cite[Lem.~1.6.2, p.224]{murre83}
 there exists a cycle 
 $Z$ on $X\cross \Ab^2(X)$
 such that the composition $\psi_Z:=\phi_X\circ w_Z: \Ab^2(X) \ra \A^2(X) \ra
 \Ab^2(X)$ is  an isogeny. 
 For each $\ell \nmid \deg(\psi_Z)$, Murre   \cite[p.250]{murre83} establishes  
 \eqref{E:degWZ}.  
 Therefore, \eqref{E:MT1} holds for all primes $\ell$ such that $\ell \nmid
 \deg(\psi_Z)$ and such that $H^{3}(X,\mathbb Z_\ell(2))$ is
 torsion-free.   Over $\mathbb C$, a result of Voisin \cite[Thm.~1.12,
 1.13]{voisinAJ13}, \cite[Thm.~1.7]{voisinUniv} establishes 
 that for threefolds, 
 if the diagonal admits an \emph{integral}
 cohomological decomposition,  
 then there 
 exists a cycle $Z$ so that $\psi_Z$ is of degree $1$ 
(see also \cite[Rem.~1.9]{voisinAJ13}). 
\end{rem}

\subsection{Algebraic representatives, Chow varieties, and traces for primary
 field extensions.}\label{S:prelbasechange}

If an algebraic representative exists, then by definition it is dominated by the
Albanese varieties of components of the Chow scheme.  Since 
Albanese varieties and Chow schemes respect base change of fields, this  gives an approach for studying
the problem of base change for algebraic representatives.   A key tool for
studying base change of abelian varieties for  primary extensions of fields is
the so-called $L/K$-trace.  We review in this section some of the results in the
literature, and how we will employ them in the context of algebraic
representatives.

\subsubsection{Chow rigidity and $L/K$-trace for primary field extensions}
\label{S:rigid}
We exploit
(Chow) rigidity and $L/K$-traces for  abelian
varieties and primary field extensions.  See
\cite{conradtrace} for a modern treatment\,; here, we summarize 
what we need.  Let $L/K$ be a primary extension of fields, i.e., an
extension such that the algebraic closure of $K$ in $L$ is purely
inseparable over $K$.  If $L/K$ is primary and separable (e.g., if $K$ is algebraically closed), then $L/K$ is said to be regular.

If $A$ is an abelian variety over $L$,  an \emph{$L/K$-trace} is a final object
$(\operatorname{tr}_{L/K}(A),\tau_A)$ in the category of pairs $(\underline
B,f)$ where $\underline B$ is an abelian variety over $K$ and $f:\underline
B_L\to A$ is a map of abelian varieties over $L$.
An $L/K$-trace exists \cite[Thm.~6.2]{conradtrace}, and is unique up to unique
isomorphism.     For brevity, we will frequently denote the $L/K$-trace by the pair $(\LKtrace A, \tau) $.

The kernel $\ker(\tau)$ is always a finite group scheme \cite[Thm.~6.4(4)]{conradtrace}.  If $\operatorname{char}(K) = 0$, then $\ker(\tau)$ is trivial \cite[p.72]{conradtrace}\,; more generally, if $L/K$ is regular, then $\ker(\tau)$ is a connected group scheme with connected dual
\cite[Thm.\ 6.4, Thm.\
6.12]{conradtrace}, and thus $\tau$ is purely inseparable.
In particular \cite[p.76]{conradtrace}, if $L/K$ is regular, then on $L$-points $\tau$ induces an
injection
\begin{equation}
\label{eq:rigidpoints}
\xymatrix{
 \LKtrace A(L) \ar@{^(->}[r]^{\tau(L)}& A(L).
}
\end{equation}

The image of
$\tau$ in $A$ descends to $K$ if and only if $\operatorname{Ker}(\tau)$ is the
trivial group
scheme \cite[p.72]{conradtrace}. As we have seen,  this is automatic in characteristic zero, but need not hold in positive characteristic.

Chow's rigidity theorem  (see \cite[Thm.~3.19]{conradtrace}) states that if 
$\underline B$ and $\underline C$ are both abelian varieties over $K$ (and $L/K$
is still
primary), then  the natural inclusion 
$$
\Hom_K(\underline B,\underline C) \to \Hom_L(\underline B_L,\underline C_L)
$$ 
is
a bijection.  In particular, if $A/L$ and $\underline B/K$ are abelian
varieties,
then  there are canonical bijections
\begin{equation}
\label{eq:rigidtrace}
\Hom_L(\underline B_L,A) =\operatorname{Hom}_L(\underline B_L, \LKtrace A_L)=
\Hom_K(\underline B, \LKtrace A),
\end{equation}
where the first equality is  the universal property of the $L/K$-trace,
and the second equality comes  from rigidity.

\subsubsection{Chow varieties, Albaneses, and algebraic representatives}
\label{S:chow}
If $X/K$ is a  projective variety over a perfect field
$K$, then
for each $i$ there is a Chow scheme $\Chow^i(X)$ over $K$, a union of countably
many connected components, whose points parametrize cycles of codimension $i$.
If $L/K$ is an extension of fields and $X_L:=X_K\times _KL$, then $\Chow^i(X_L)
\iso
\Chow^i(X)_L:=\operatorname{Chow}^i(X)\times_KL$
\cite[Prop.~1.1]{friedlander91}\,; see also
\cite{K96}.  

If $(Y,y)/K$ is a pointed geometrically integral
variety 
over a perfect field,
an
Albanese variety for $Y$ is  an abelian variety $\alb(Y)$ over $K$ equipped
with a pointed map $(Y,y) \ra (\alb(Y),0)$ that is initial  with
respect to pointed maps from $(Y,y)$ to abelian varieties.  If  $Y/K$ is
smooth
and proper,
then $\alb(Y) = \operatorname{Pic}^0_{(\pic^0_{Y/K})_{{\rm red}}/K}$\,; since the
Picard functor
behaves well under base change, we have $\alb(Y_L) \iso
\alb(Y)\cross_K L$ for any extension of perfect fields $L/K$.  For 
geometrically integral  
varieties  $Y/K$  that are not smooth and proper,  either using (a
compactification and) resolution of
singularities in characteristic $0$, or
an  argument via de Jong's alterations as in \cite[Cor.~A.11]{mochizuki12}, one
may again show that $Y$ admits an Albanese, and that 
$\alb(Y_L) \iso\alb(Y)\cross_K L$.

One setting where we will use Albaneses extensively (e.g., the proof of Theorem \ref{T:ACBC}) is the following.  
Let $X$ be a smooth projective variety defined over an algebraically
closed
field  $k$, and assume
there is an  algebraic representative
$(\operatorname{Ab}^i(X),\phi^i_X)$.  From the discussion above, we see that $\operatorname{Ab}^i(X)$ 
is the colimit of  the inverse system of abelian
varieties obtained from the Albanese varieties of components of
the Chow scheme
$\operatorname{Chow}^i(X)$.  
In fact, one should work with Albaneses of smooth alterations of the reduction of  products of components of the Hilbert scheme, but we avoid this technical point in this discussion.  
 In particular, $\operatorname{Ab}^i(X)$ admits a surjection
from a finite product of these 
 abelian varieties.  
Moreover, if $K\subseteq k$ is a perfect field, and
$\underline X$ is defined over $K$ with $X=\underline X\times_K k$, then 
$\Ab^i(X)$ admits a morphism from  the pullback to $k$ of the inverse system of torsors under abelian
varieties defined over $K$ obtained from the Albanese torsors of components of
the Chow scheme
$\operatorname{Chow}^i(\underline X)$. 
In particular, since every component of $\operatorname{Chow}^i(X)$ is a component of a pullback to $k$ of a component of  $\operatorname{Chow}^i(\underline X)$, we have that $\Ab^i(X)$ admits a surjection from a torsor under an abelian variety defined over $K$.  
(In fact, it will follow from Theorems  \ref{T:ACBC} and \ref{T:Desc} that 
if $k=\bar  K$, or $\operatorname{char}(k)=0$,  then  $\Ab^i(X)$, if it exists, descends to an abelian variety  over $K$.)

\subsection{Algebraically closed base change for algebraic
 representatives}
\label{S:bc1}
The aim of this subsection is to establish our main addition to Murre's work on algebraic representatives over algebraically closed fields\,:

\begin{teo}\label{T:ACBC}
 Let $\Omega/k$ be an extension of algebraically closed fields.  Let
 $X/k$ be a smooth projective variety.  Assume that either $\A^i(X)$  or 
 $\A^i(X_\Omega)$  admits an
 algebraic
 representative. Then both   $\A^i(X)$ and
 $\A^i(X_\Omega)$  admit an
 algebraic
 representative.
 
 Moreover, 
 let 
 $(\operatorname{Ab}^i(X),\phi^i_{X})$ and  $(\operatorname{Ab}^i(X_{\Omega}),\phi^i_{X_\Omega})$ be algebraic representatives for   $\A^i(X)$ and
 $\A^i(X_\Omega)$, respectively.
 Then there are natural maps
 \[
 \xymatrix{
  \Ab^i(X) \ar[r]^{b'} & \LKtrace \Ab^i(X_\Omega)
 }\text{ and } \xymatrix{
 \LKtrace \Ab^i(X_\Omega)_\Omega \ar[r]^{\tau} & \Ab^i(X_\Omega)
}
\]
over $k$ and $\Omega$, respectively,  where 
$(\LKtrace{\Ab}^i(X_\Omega),\tau)$ is the $\Omega/k$-trace of
$\operatorname{Ab}^i(X_\Omega)$ (\S \ref{S:rigid}).
In characteristic zero, $b'$ and $\tau$ are isomorphisms\,; in positive 
characteristic, they are purely inseparable isogenies.
In particular, in characteristic zero, there is a natural isomorphism $\operatorname{Ab}^i(X)_\Omega \cong \operatorname{Ab}^i(X_{\Omega})$.
\end{teo}

We start by recalling a rigidity result for torsion in  Chow
groups.

\begin{teo}[{Lecomte \cite[Thm.~3.11]{lecomte86}}]
 \label{P:acbc}
 Let $\Omega/k$ be an extension of algebraically closed fields, and let $X$ be a
 projective variety over $k$.   Base change on cycles induces an
 injective
 homomorphism of Chow groups $b_{\Omega/k}: \chow^i(X) \ra \chow^i(X_\Omega)$. For each positive integer $N$, $b_{\Omega/k}$ induces isomorphisms on the
 $N$-torsion groups\,:
 \begin{alphabetize}
  \item \label{P:acbc1}  $b_{\Omega/k}[N]:\chow^i(X)[N]
  \stackrel{\sim}{\longrightarrow } \chow^i(X_\Omega)[N]$\,; 
  
  \item  \label{P:acbc2}  $b_{\Omega/k}[N]:\A^i(X)[N]
  \stackrel{\sim}{\longrightarrow } \A^i(X_\Omega)[N]$. 
 \end{alphabetize}
\end{teo}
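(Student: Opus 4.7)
\medskip

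\noindent\textbf{Proof proposal for Theorem \ref{P:acbc}.}

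The plan is to first establish injectivity of $b_{\Omega/k}$ on full Chow groups by a standard spreading-out and specialization argument, and then to upgrade this to a bijection on $N$-torsion by comparing cycle classes in \'etale cohomology, which is insensitive to algebraically closed extensions when $N$ is invertible in $k$.

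For injectivity, I would write $\Omega = \varinjlim_L L$ as a filtered colimit of finitely generated $k$-subalgebras $L \subseteq \Omega$. Any cycle $\alpha \in \chow^i(X)$ with $b_{\Omega/k}(\alpha) = 0$ is rationally equivalent to zero via a finite configuration of cycles on $X_\Omega$, and this witness descends to $X_L$ for some such $L$. Spreading out, I would choose a smooth integral $k$-variety $V$ of finite type with $k(V) \hookrightarrow L$ over which both the cycle $X \times V$ and the witnessing rational equivalence are defined; possibly shrinking $V$, I would specialize at any point $v \in V(k)$, which exists because $k$ is algebraically closed. Since $X_v = X$ and specialization preserves rational equivalence, this forces $\alpha = 0$ in $\chow^i(X)$.

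For part (a), the surjectivity of $b_{\Omega/k}$ on $N$-torsion is the main obstacle. Given $\beta \in \chow^i(X_\Omega)[N]$, the same spreading-out procedure produces a finitely generated $L \subseteq \Omega$ with $\beta$ coming from $\tilde\beta \in \chow^i(X_L)$ satisfying $N\tilde\beta = 0$, and a smooth model $V/k$ of $L$ carrying a cycle $\tilde\beta_V$ on $X \times V$ with $N\tilde\beta_V$ rationally trivial on each fiber. Specializing at a point $v \in V(k)$ gives $\alpha := (\tilde\beta_V)_v \in \chow^i(X)[N]$, and the candidate identity $b_{\Omega/k}(\alpha) = \beta$ is the step to verify. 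The crucial input here is the Bloch map $\lambda^i: \chow^i(Y)[N] \hookrightarrow H^{2i-1}_{\et}(Y,\mu_N^{\otimes i})$, which is injective for any smooth projective $Y$ and for $N$ invertible on $Y$ (essentially Merkurjev--Suslin and Bloch--Ogus, as already used in the discussion surrounding Theorem \ref{T:Murre}). Under base change $\Omega/k$, smooth--proper base change gives $H^{2i-1}_{\et}(X,\mu_N^{\otimes i}) \xrightarrow{\sim} H^{2i-1}_{\et}(X_\Omega,\mu_N^{\otimes i})$, and the cycle class map is compatible with base change. Hence $\lambda^i$ identifies both $\chow^i(X)[N]$ and $\chow^i(X_\Omega)[N]$ with subgroups of the same \'etale cohomology group, and the classes of $\alpha$ and $\beta$ agree there; by injectivity of $\lambda^i$ over $\Omega$, $b_{\Omega/k}(\alpha) = \beta$.

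For part (b), it suffices to show that if $\alpha \in \chow^i(X)$ satisfies $b_{\Omega/k}(\alpha) \in \A^i(X_\Omega)$, then already $\alpha \in \A^i(X)$. Equivalently, $b_{\Omega/k}$ induces an injection on the quotient $\chow^i/\A^i$ of cycles modulo algebraic equivalence. This follows from the same spreading-out and specialization argument used for injectivity, since algebraic triviality is witnessed by a cycle on $X \times T$ for some smooth connected $k$-variety $T$, which also spreads and specializes. Combining with part (a) yields the desired bijection $\A^i(X)[N] \xrightarrow{\sim} \A^i(X_\Omega)[N]$.
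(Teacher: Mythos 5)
Your treatment of the injectivity of $b_{\Omega/k}$ and of part (b) is essentially the paper's own argument: injectivity is the standard spreading-out-and-specialization argument (the paper points to Bloch's book for this), and for (b) the paper likewise uses (a) to produce a class $c\in\chow^i(X)[N]$ mapping to the given $\alpha\in\A^i(X_\Omega)[N]$, and then spreads out the pointed curve and relative cycle witnessing the algebraic triviality of $c_\Omega$ over a finitely generated subfield of $\Omega$ to a parameter variety over $k$, concluding $c\in\A^i(X)$. That part of your proposal is fine.

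The gap is in your proof of part (a). The paper does not reprove (a); it cites Lecomte, whose argument is a rigidity argument in the style of Suslin and does not pass through \'etale cohomology. Your route rests on the claim that the Bloch map $\lambda^i:\chow^i(Y)[N]\to H^{2i-1}_{\et}(Y,\mu_N^{\otimes i})$ is injective ``for any smooth projective $Y$.'' That is not a theorem: injectivity of $\lambda^i$ on torsion is known only for $i=1$ (Kummer theory on $\pic$), for $i=2$ (Merkurjev--Suslin, which is precisely the input in the discussion of Murre's theorem that you are pointing to), and for $i=\dim Y$ (Roitman's theorem); for $2<i<\dim Y$ it is a well-known open problem. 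Since the statement is for arbitrary codimension $i$ --- and is applied in the paper for general $i$ in the proof of Theorem \ref{T:ACBC} --- your argument only establishes (a) in the special cases $i\le 2$ and $i=\dim X$. A secondary issue is that the theorem is stated for a possibly singular projective variety, where the smooth-proper base change and Bloch--Ogus inputs you invoke are not available. To close the gap you would either have to cite Lecomte's theorem as the paper does, or give a rigidity argument that avoids the cycle class map on torsion altogether.
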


\begin{proof} That the base-change homomorphism $b_{\Omega/k}: \chow^i(X) \ra
 \chow^i(X_\Omega)$  is injective is standard and is proved using a spreading out
 argument followed by a specialization argument (e.g.,
 the argument of \cite[Lem.~1A.3, p.22]{blochbook}). Precisely, denoting $r_{\Omega/k} : \chow^i(X_\Omega) \to \chow^i(X)$ the specialization homomorphism (see \cite[Lem.~1A.3, p.22]{blochbook} and \cite[\S 20.3]{fulton}), we have that $r_{\Omega/k} \circ b_{\Omega/k}$ is the identity on $\chow^i(X)$, giving the injectivity of $b_{\Omega/k}$.  
 Part  \ref{P:acbc1} is the content of \cite[Thm.\ 3.11]{lecomte86}. 
As a consequence of \ref{P:acbc1}, $r_{\Omega/k}$ and $b_{\Omega/k}$ are in fact inverses of each other on $N$-torsion.  
Part \ref{P:acbc2} then follows from the fact that $b_{\Omega/k}$ and $r_{\Omega/k}$ respect algebraic equivalence.
\end{proof}

\begin{proof}[Proof of Theorem \ref{T:ACBC}] We proceed in four steps.

 \noindent  \textbf{Step 1.}  \emph{Let $X$ be a smooth projective variety defined over $k$, let  $A/k$ be an abelian variety, and let $\phi : \operatorname{A}^i(X) \rightarrow A(k)$ be a regular homomorphism.  Then there is a regular homomorphism $\phi_\Omega: \operatorname{A}^i(X_\Omega) \to
  A_\Omega(\Omega)$,  which is surjective if $\phi$ is surjective,
  making the following diagram commute\,:}
 \begin{equation}\label{E:ACBC3}
 \xymatrix{
  \operatorname{A}^i(X)\ar@{->}[r]^<>(0.5){  \phi} \ar@{->}[d]^{ b} &
  A(k)  \ar@{->}[d]^{\text{base change}}  \\
  \operatorname{A}^i(X_\Omega)  \ar@{->}[r]^<>(0.5){\phi_\Omega}&
  A_\Omega(\Omega).
  \\
 }
 \end{equation}
 Note that  by the criterion of \cite[Thm.~2.2]{hsaito}, \cite[Prop.~2.1]{murre83} (see Proposition \ref{P:saito}), it follows from Step 1 that if $\operatorname{A}^i(X_\Omega)$ has an algebraic representative, then so does $\operatorname{A}^i(X)$.

 To establish Step 1, we first 
 construct $\phi_\Omega$\,; we thank the referee for providing such a construction.
 Denote
 by $\operatorname{A}^i_\natural (X)$ the contravariant functor on the category
 of 
 smooth integral  varieties over $k$ given by families of
 algebraically
 trivial cycles on $X$. More precisely, the functor $\operatorname{A}^i_\natural
 (X)$ sends a  variety $T$ as above to the group of cycles $Z \in 
 \operatorname{CH}^i(X \times_k T)$ such that for some (equivalently, for any)
 geometric point $x: \operatorname{Spec}(\Omega') \to  T$ with $\Omega'$ being an
 algebraically closed field over $k$, we have that the cycle $x^! (Z) \in 
 \operatorname{CH}^i(X_{\Omega'})$
 is algebraically trivial.
 By definition of a regular homomorphism, we have a morphism of contravariant
 functors
 $$
 \phi_\natural :\operatorname{A}^i_\natural(X)\to A
 $$
 on the category of smooth  integral  varieties over $k$.  
 Moreover, both functors $\operatorname{A}^i_\natural(X)$ and $A$  send inverse limits of varieties to direct limits of
 abelian groups. 
 Therefore, the  morphism of functors $\phi_\natural$  extends in a
 canonical way to the category of schemes over $k$ that can be obtained as
 inverse limits of smooth integral varieties over $k$.
 In particular,
 evaluating on $\operatorname{Spec}\Omega$, 
 we obtain the desired  regular
 homomorphism 
 $$
 \begin{CD}
 \operatorname{A}^i(X_\Omega)=\operatorname{A}^i_\natural(X)(\Omega)
 @>\phi_\natural(\Omega)>>  A(\Omega),
 \end{CD}
 $$ 
 which we denote by $\phi_\Omega$. Note that this homomorphism is surjective when $\phi$ is surjective due
 to  the existence of the cycle and isogeny from \cite[Cor.~1.6.3]{murre83} (see Lemma \ref{L:M-1.6.2}(c)).

 \noindent  \textbf{Step 2.} \emph{Let $X$ be a smooth projective variety defined over $k$, let $A/\Omega$ be an abelian variety, and let $\phi_{X_\Omega} : \operatorname{A}^i(X_\Omega) \to A(\Omega)$ be a regular homomorphism. 
  Then  there is a regular homomorphism $\LKtrace{\phi}:
  \operatorname{A}^i(X)\to \LKtrace{A}(k)$ to the $k$-points of the $\Omega/k$-trace of
  $A$ (see \S \ref{S:rigid}), making the
  following diagram commute\,:}
 \begin{equation}\label{E:LKphi}
 \xymatrix@R=.5cm{
  \operatorname{A}^i(X) \ar[r]^<>(0.5){\LKtrace{\phi}} \ar[dd]^b&
  \LKtrace{A}(k) \ar[d]^{\text{ base change}}\\
  & \LKtrace{A}(\Omega) \ar[d]^{\tau(\Omega)}\\
  \operatorname{A}^i(X_\Omega)  \ar[r]^<>(0.5){\phi_{X_\Omega}}&
  A(\Omega).
 }
 \end{equation} 
 Here,
 $b=b_{\Omega/k}:\operatorname{A}^i(X)\to
 \operatorname{A}^i(X_\Omega)$ is the base change map on cycles, and
 $\tau:\LKtrace{A}_\Omega \to A$ is the
 canonical map (see \S \ref{S:rigid})\,; recall that $\tau$ is an isomorphism onto its image in characteristic zero, and is at worst a purely inseparable isogeny onto its image in positive characteristic.  In particular, $\tau$ induces an inclusion on $\Omega$-points \eqref{eq:rigidpoints}.

 To establish Step 2, let us   define $\LKtrace{\phi}$.  For this, assume that $\alpha\in
 \operatorname{A}^i(X)$.  Then there is a smooth projective irreducible curve $T$
 over $k$,
 with $k$-points $t_1,t_0$,  and a cycle $Z\in \operatorname{CH}^i(T\times_k X)$
 such that $\alpha=w_{Z,t_0}(t_1)$ (e.g., \cite[Exa.~10.3.2]{fulton}).
 Since $\phi_{X_\Omega}$ is regular, there is an $\Omega$-morphism $\gamma:
 T_\Omega \to A$ that  induces $\phi_{X_\Omega}\circ
 w_{Z_\Omega}$.  The morphism $\gamma$ necessarily factors through the Albanese
 map $a_\Omega: T_\Omega \ra \alb(T_\Omega)$, via a morphism say
 $\delta:\operatorname{Alb}(T_\Omega)\to A$.  Since
 $(\alb(T_\Omega),a_\Omega) \iso (\alb(T),a)_\Omega$,    the definition of the
 $\Omega/k$-trace and rigidity (see \S \ref{S:rigid}) together imply  there is a 
 $k$-morphism $\epsilon:\operatorname{Alb}(T)\to  \LKtrace{A}$ so
 that $\delta=\tau \circ \epsilon_\Omega$.  In other words, there is a
 commutative diagram
 $$
 \xymatrix@R=.7cm{
  &T_\Omega(\Omega) \ar[r]^{w_{Z_{\Omega}}} \ar@/^2pc/@{->}[rr]^{\gamma(\Omega)}
  \ar[d]^{a_\Omega(\Omega)}& \operatorname{A}^i(X_\Omega) 
  \ar[r]^<>(0.5){\phi_{X_\Omega}}& A(\Omega)\\
  T(k)\ar[ru] \ar[d]_{a(k)}&\operatorname{Alb}(T_\Omega)(\Omega)
  \ar[rru]_{\delta(\Omega)} \ar[rr]_{\epsilon_\Omega(\Omega)}& & 
  \LKtrace{A}(\Omega) \ar[u]_{\tau(\Omega)} \\
  \operatorname{Alb}(T)(k) \ar[ru]
  \ar[rr]_{\epsilon(k)}&&\LKtrace{A}(k). \ar[ru]_{\text{ \ \ base
    change}}&\\
 }
 $$
 We define $\LKtrace{\phi}(\alpha):=\epsilon(a(t_1)-a(t_0))$.  Since the base
 change map on points is injective and since $\operatorname{Ker}( \tau)(\Omega)$
 is trivial (see
 \eqref{eq:rigidpoints}), $\LKtrace{\phi}(\alpha)$  is well-defined, i.e., it is 
 independent of the choice of $T$.
 It is  clear from our construction of $\LKtrace{\phi}$  that \eqref{E:LKphi}
 commutes.  In fact, starting with a general smooth pointed connected  variety
 $T$, the same argument shows that $\LKtrace{\phi}$ is regular (since $\epsilon
 \circ a$ is defined over $k$).

 \noindent  \textbf{Step 3.} \emph{In the notation of Step 2, if $\phi_{X_\Omega}$ is surjective, then $\LKtrace{\phi}:A^i(X)\to \LKtrace{A}(k)$ is surjective and $\tau:\LKtrace{A}_\Omega\to A$ is a purely inseparable  isogeny
  (and an isomorphism in  characteristic $0$).}
 
 \noindent Note that  by the criterion of \cite[Thm.~2.2]{hsaito}, \cite[Prop.~2.1]{murre83} (see Proposition  \ref{P:saito}), it follows from Step 3 and \eqref{E:LKphi} that  if $\operatorname{A}^i(X)$ has an algebraic representative, then so does $\operatorname{A}^i(X_\Omega)$.

 To establish Step 3, recall  that if $\phi_{X_\Omega}$ is surjective, then 
 $A/\Omega$ is dominated by an inverse system of abelian
 varieties defined over $k$ (see the discussion in \S \ref{S:chow})\,; in fact, 
 there is an abelian variety $B/k$ (namely the Albanese of a finite union of smooth
 alterations 
 of components of the Chow scheme)
 such that there is a surjection $\alpha:B_\Omega
 \to
 A$.    It follows (see \S \ref{S:rigid}) that $\tau$ is a purely inseparable  isogeny
 (and an isomorphism in  characteristic $0$).

 Let us now show that $\LKtrace{\phi}$ is surjective provided $\phi_{X_\Omega}$ is surjective. Let $\ell\in \mathbb N$ be a  prime $\ne \operatorname{char}(k)$, and let us consider $\ell$-torsion in the commutative diagram \eqref{E:LKphi}.
 The morphism $\tau$
 induces an isomorphism \eqref{eq:rigidpoints} $T_\ell \tau: T_\ell
 \LKtrace{A}(\Omega)\to T_\ell  A(\Omega)$. Lecomte's rigidity Theorem
 \ref{P:acbc} says that the homomorphism $T_\ell b$ is an isomorphism.
 The base change map $ T_\ell \LKtrace A(k) \to T_\ell \LKtrace
 A(\Omega)$ on torsion is also an isomorphism.  Therefore, a diagram
 chase in \eqref{E:LKphi} shows the surjectivity of $T_\ell
 \LKtrace{\phi}$, and thus that of $\LKtrace{\phi}$
 (\cite[Lem.~1.6.2(i)]{murre83}\,; see Lemma \ref{L:M-1.6.2}(a)).

 \noindent  \textbf{Step 4.}
 \emph{
  Let $(\operatorname{Ab}^i(X),\phi^i_{X})$ and  $(\operatorname{Ab}^i(X_{\Omega}),\phi^i_{X_\Omega})$  be  algebraic representatives  for   $\A^i(X)$ and
  $\A^i(X_\Omega)$, respectively.  The $k$-homomorphism   $b':\operatorname{Ab}^i(X)\to \LKtrace{\Ab}^i(X_\Omega)$ induced by the   regular homomorphism 
  $\LKtrace{\phi}:A^i(X)\to \LKtrace{\operatorname{Ab}}^i(X_\Omega)(k)$ of Step 2  
  is an isomorphism in characteristic zero, and a purely inseparable
  isogeny in positive characteristic.}
 
 \noindent  It suffices to show that $b'(k)$ is an
 isomorphism.
 We have the following 
 commutative diagram\,:
 \begin{equation}\label{E:1}
 \xymatrix@R=.5cm{
  \operatorname{A}^i(X) \ar@{->>}[r]^<>(0.5){\phi^i_X} \ar@{=}[d]&
  \operatorname{Ab}^i(X)(k) \ar[d]^{b'(k)}\\
  \operatorname{A}^i(X) \ar[r]^<>(0.5){\LKtrace{\phi}}&
  \LKtrace{\operatorname{Ab}}^i(X_\Omega)(k).
 }
 \end{equation}
 We already know from Step 3 that $\LKtrace{\phi}$ is surjective. Therefore, it remains only to prove that $b'$ is injective on $k$-points.
 Let $\zeta\in \operatorname{Ker} b'(k)$ and let $z\in
 \operatorname{A}^i(X)$ be a cycle mapping to $\zeta$ (i.e.,
 $\phi_X^i(z)=\zeta$).  Then $z_\Omega:=b(z) \in \operatorname{A}^i(X_\Omega)$ is abelian equivalent to $0$ (i.e., $z_\Omega$ is sent to $0$ under any regular morphism), since
 $\phi_{X_\Omega}^i(z_\Omega)=0$ by the commutativity of
 \eqref{E:LKphi} and \eqref{E:1}.  In  particular, considering the regular homomorphism  $(\phi^i_X)_\Omega$ of Step 1, we have 
 $(\phi^i_X)_\Omega (z_\Omega)=0$.  By virtue of the  commutativity of \eqref{E:ACBC3},
 and the injectivity of the base change map, we have
 that $\phi_X^i(z)=0$\,; i.e., $\zeta=0$, and we have that $b'$ is
 injective on geometric points.   
\end{proof}

\section{Algebraic representatives and Galois actions} \label{S:Gal}

In this section we extend the notion of algebraic representative to the setting
of Galois field extensions.  The main result of the section is Theorem
\ref{T:Desc}, which says that algebraic representatives descend from the
algebraic closure of a perfect field.  We also establish several basic results
on regular homomorphisms in this setting following \cite{murre83}.

Throughout this section, we take $k$ to be an 
algebraically closed field, and  $K \subseteq k$ to be a perfect sub-field such
that $k/K$ is algebraic.  (In particular, $K$ is the fixed field of $\gal(k/K)$.)

\subsection{Preliminaries on Galois descent}

If $X/k$ is a variety and $\sigma \in \gal(k/K)$, let $X^\sigma$ be
the fiber product $X^\sigma = X\times_{\spec k, \sigma} \spec k$\,; by definition, it comes equipped with a  map $\tilde\sigma: X^\sigma \to X$ over $\sigma$.   A
$k/K$ descent datum on $X$ is a system of  isomorphisms
$\st{\sigma_X: X \stackrel{\sim}{\to} X \text { over } \sigma: \sigma \in \gal(k/K)}$ satisfying
the co-cycle condition $(\sigma\tau)_X = \sigma_X \circ \tau_X: X  \to X$.  
There is a similar notion
of a descent datum on a morphism of varieties (i.e., a descent datum
on its graph), and the category of quasi-projective $k$-schemes
equipped with $k/K$ descent data is equivalent to the category of
quasi-projective schemes over $K$ (e.g., \cite[Thm.~6, p.135, p.141]{BLR}, 
\cite[Thm.~14.84, p.457]{GW10}).  We will often denote the effect of
this functor by $(X, \st{\sigma}) \mapsto \underline X$\,; the inverse
functor is $\underline Y \mapsto Y := \underline Y \times_{\spec
 K}\spec k$.  Similar notation will be employed for cycles.

Note that a $k/K$ descent datum on $X$ (or, equivalently, a choice of
$K$-model $\underline X$) determines an action of $\gal(k/K)$ on the $k$-points 
$X(k)$\,; a morphism $X \to Y$ of $k$-varieties with descent data
descends to $K$ if and only if its action on $k$-points is
$\gal(k/K)$-equivariant (e.g., \cite[Prop.~16.9]{milneAG}).

\subsection{Action of the Galois group on cycles}

Let $X/k$ be a quasi-projective variety equipped with a $k/K$-descent
datum $\st{\sigma_X}$.  Pullback by the flat morphism of schemes
$\sigma_X$ induces \cite[\S 1.7]{fulton} a
homomorphism
$$
\sigma_X^*:\operatorname{CH}^i(X)\to \operatorname{CH}^i(X),
$$
and we have $\sigma_X^*[Z] = [\sigma_X^*Z]$.
The co-cycle condition for descent data then yields an action of
$\gal(k/K)$ on $\chow^i(X)$, which restricts to an action on
$\A^i(X)$.

Now let $T/k$ be a smooth, connected variety over $k$\,; let $Z$ be a
codimension $i$ cycle on $T\times_k X$, and let $t_0: \spec k \to T$
be a $k$-point. 
If $T$, $Z$ and $t_0$ are equipped with compatible $k/K$ descent data, we obtain a commutative diagram 
$$
\xymatrix@C=1.5cm@R=.75cm{
 T(k) \ar@{->}[r]^{w_{Z,t_0}}  \ar@{->}[d]_{ \sigma_T^*} &   \operatorname{A}^i(X)
 \ar@{->}[d]^{\sigma_X^*}   \\
 T(k)  \ar@{->}[r]^{w_{Z,t_0}} &   \operatorname{A}^i(X). \\}
$$
and we see that $w_Z$ is $\gal(k/K)$-equivariant.  

Even in the absence of a distinguished $K$-model for $T$ and $Z$ one can show, using the refined Gysin map \cite[\S 6.3]{fulton}, that 
\begin{equation}\label{E:CycFiber}
\sigma_X^* Z_{t_0}=((\tilde \sigma \times_\sigma \sigma_X)^*Z)_{\tilde
 \sigma^*t_0}
\end{equation}
and we thus have a commutative diagram
\begin{equation}\label{E:WZTsig}
\xymatrix@C=2.5cm@R=.75cm{
 T(k) \ar@{->}[r]^{w_{Z,t_0}}  \ar@{->}[d]_{\tilde \sigma^*} &  
 \operatorname{A}^i(X) \ar@{->}[d]^{\sigma_X^*}   \\
 T^\sigma(k)  \ar@{->}[r]^{w_{(\tilde \sigma \times_\sigma \sigma_X)^*Z,\tilde
   \sigma^*t_0}} &   \operatorname{A}^i(X). \\}
\end{equation}

\subsection{Galois equivariant algebraic representatives}  \label{S:GEAR}
Let $k$ be an  algebraically closed field, and  $K \subseteq k$  a perfect
sub-field such that $k/K$ is algebraic.  Let  $X$   be a smooth projective
variety over $k$,  equipped with  $k/K$ descent data.  
Let $A$ be an abelian variety over $k$ that is also equipped with $k/K$  
descent data.  We say that a regular homomorphism 
$$
\phi:\operatorname{A}^i(X)\to A(k)
$$
is \emph{$\operatorname{Gal}(k/K)$-equivariant} if for each $\sigma\in
\operatorname{Gal}(k/K)$, the following diagram is commutative\,:
$$
\xymatrix@C=1.5cm@R=.75cm{
 \operatorname{A}^i(X) \ar@{->}[r]^{\phi} \ar@{->}[d]^{\sigma_X^{*}}  & A(k)
 \ar@{->}[d]^{\sigma_{A}^*}\\
 \operatorname{A}^i(X) \ar@{->}[r]^{\phi}& A(k). \\
}
$$

\begin{rem}
 \label{R:descendabvar}
 Given such data $(A,\st{\sigma_A}, \phi)$, the class $[0] \in \A^i(X)$ is fixed by $\gal(k/K)$, and so $\phi([0])$ descends to $K$\,; $A(K)$ is nonempty. 
 By uniqueness of group structures on abelian varieties,
 we see that the group law on $A$ also descends to $\underline A$\,; $A$ descends to $K$ as an abelian variety.
\end{rem}

\begin{dfn}\label{D:GEAR}  Let $k$ be an  algebraically closed field, and  $K
 \subseteq k$  a
 perfect sub-field such that $k/K$ is algebraic.  Let  $X$   be a  smooth
 projective variety over $k$,  equipped with  $k/K$ descent data.  
 A \emph{Galois equivariant  algebraic  representative} for
 $\operatorname{A}^i(X)$  is a pair
 $(\Ab^i_{\gal(k/K)}(X),\phi_{X,\gal(k/K)})$ which is initial among all pairs
 $(A,\phi)$ consisting of an abelian variety $A/k$ with $k/K$ descent
 datum and a Galois-equivariant regular homomorphism $\phi:\A^i(X) \to
 A(k)$.
\end{dfn}

\begin{rem}\label{R:desendmorphismabelian}
 Note that for a pointed smooth integral 
 $k$-variety $T$, and a codimension
 $i$ cycle
 $Z$ on $T\times_kX$, the following diagram is commutative for a Galois
 equivariant regular homomorphism $\phi:\operatorname{A}^i(X)\to A(k)$\,:
 $$
 \xymatrix@C=2.5cm@R=.75cm{
  T(k) \ar@{->}[r]^{w_{Z,t_0}}  \ar@{->}[d]^{\tilde \sigma^*} &  
  \operatorname{A}^i(X) \ar@{->}[r]^{\phi} \ar@{->}[d]^{\sigma_X^{*}}  & A(k)
  \ar@{->}[d]^{\sigma_{A}^*}\\
  T^\sigma(k)  \ar@{->}[r]^{w_{(\tilde \sigma \times_\sigma  \sigma_X)^*Z, \tilde
    \sigma ^*t_0}} &\operatorname{A}^i(X) \ar@{->}[r]^{\phi}& A(k). \\
 }
 $$
 Moreover,  for a smooth integral 
 $k$-variety $T$ with descent datum,
 together  with a
 $\operatorname{Gal}(k/K)$-invariant marked point $t_0$, and a codimension $i$
 cycle $Z$ on $T\times_kX$ such that the descent data for $T$ and $X$ induces
 descent data for $Z$, the following diagram is commutative for a Galois
 invariant regular homomorphism $\phi:\operatorname{A}^i(X)\to A(k)$\,:
 $$
 \xymatrix@C=1.5cm@R=.75cm{
  T(k) \ar@{->}[r]^{w_{Z,t_0}}  \ar@{->}[d]^{\sigma_T^*} &   \operatorname{A}^i(X)
  \ar@{->}[r]^{\phi} \ar@{->}[d]^{\sigma_X^{*}}  & A(k)
  \ar@{->}[d]^{\sigma_{A}^*}\\
  T(k)  \ar@{->}[r]^{w_{Z,t_0}} &\operatorname{A}^i(X) \ar@{->}[r]^{\phi}& A(k).
  \\
 }
 $$
 This says that the morphism of $k$-varieties $T\to A$ inducing
 $\phi\circ w_Z$ descends to $K$.
\end{rem}

We now prove some basic results about Galois equivariant regular morphisms.   
The main result we want is that for smooth projective varieties over a perfect
field $K$, algebraic representatives of the variety over the algebraic closure
give Galois equivariant algebraic representatives\,; in particular, they descend
to $K$.

\begin{teo}\label{T:Desc}
 Let $X/k$ be a smooth projective variety over an algebraically 
 closed field k, and suppose that $\operatorname{A}^i(X)$ has an 
 algebraic representative $(\operatorname{Ab}^i(X),\phi^i_X)$. Let 
 $K \subseteq k$ be a perfect field such that $k/K$ is algebraic. Then 
 each $k/K$ descent datum on $X$ induces a $k/K$ descent datum 
 on $\operatorname{Ab}^i(X)$. Moreover, with respect to these descent data,
 $\phi^i_X$ is $
 \operatorname{Gal}(k/K)$-equivariant, and the pair $
 (\operatorname{Ab}^i(X),\phi^i_X)$ is a  Galois 
 equivariant  algebraic representative.  
\end{teo}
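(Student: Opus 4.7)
The plan is to prove the three assertions of the theorem in order: (i) that the given $k/K$ descent datum on $X$ induces one on $\operatorname{Ab}^i(X)$, (ii) that $\phi^i_X$ is Galois equivariant for these descent data, and (iii) that the resulting pair is universal among Galois equivariant regular homomorphisms. Steps (i) and (ii) will be carried out together via the universal property of the algebraic representative, applied to a Galois-twisted variant of $\phi^i_X$; step (iii) is then a formal consequence of the usual universal property plus surjectivity of $\phi^i_X$ on $k$-points.

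For steps (i) and (ii), fix $\sigma^\ast\in\operatorname{Gal}(k/K)$ and a choice of pullback $\tilde\sigma:\operatorname{Ab}^i(X)^\sigma\to\operatorname{Ab}^i(X)$ over $\sigma$. Consider the composition
\[
\phi^\sigma \ :=\ \tilde\sigma^\ast\circ\phi^i_X\circ\sigma_X^\ast\ :\ \operatorname{A}^i(X)\longrightarrow\operatorname{Ab}^i(X)^\sigma(k),
\]
where $\tilde\sigma^\ast$ is the bijection on $k$-points induced by $\tilde\sigma$. The central claim is that the pair $(\operatorname{Ab}^i(X)^\sigma,\phi^\sigma)$ is itself an algebraic representative for $\operatorname{A}^i(X)$. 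To see that $\phi^\sigma$ is regular in the usual sense, take any pointed smooth connected $k$-variety $(T,t_0)$ and cycle $Z\in\operatorname{CH}^i(T\times_k X)$. Diagram \eqref{E:WZTsig} from \S\ref{S:GalCyc} expresses $\sigma_X^\ast\circ w_{Z,t_0}$ as $w_{(\tilde\sigma\times_\sigma\sigma_X)^\ast Z,\,\tilde\sigma^\ast t_0}\circ\tilde\sigma^\ast$; applying $\phi^i_X$ and invoking regularity of $\phi^i_X$ on $(T^\sigma,\tilde\sigma^\ast t_0)$ produces a $k$-morphism $g^\sigma:T^\sigma\to\operatorname{Ab}^i(X)$ inducing the corresponding composition. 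The universal property of the base change $\operatorname{Ab}^i(X)^\sigma$ then promotes $g^\sigma$ to a $k$-morphism $T\to\operatorname{Ab}^i(X)^\sigma$ inducing $\phi^\sigma\circ w_{Z,t_0}$. Universality of $(\operatorname{Ab}^i(X)^\sigma,\phi^\sigma)$ is obtained by applying the same construction to $\sigma^{-1}$; equivalently, base change by the field automorphism $\sigma$ is an auto-equivalence of the category of $k$-schemes fixing $K$, and so preserves the property of being an algebraic representative. Uniqueness of algebraic representatives now yields a unique $k$-isomorphism $\alpha_\sigma:\operatorname{Ab}^i(X)\to\operatorname{Ab}^i(X)^\sigma$ with $\alpha_\sigma\circ\phi^i_X=\phi^\sigma$. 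Setting $\sigma_{\operatorname{Ab}^i(X)}:=\tilde\sigma\circ\alpha_\sigma$, a morphism over $\sigma$, the defining equation of $\alpha_\sigma$ unpacks precisely to the Galois equivariance $\phi^i_X\circ\sigma_X^\ast=\sigma_{\operatorname{Ab}^i(X)}^\ast\circ\phi^i_X$. The cocycle condition $(\tau\sigma)_{\operatorname{Ab}^i(X)}=\tau_{\operatorname{Ab}^i(X)}\circ\sigma_{\operatorname{Ab}^i(X)}$ follows by yet another uniqueness argument: both sides are morphisms over $\tau\sigma$, they agree on $k$-points after composition with $\phi^i_X$ (by iterated equivariance and the cocycle condition on $X$), and surjectivity of $\phi^i_X$ forces equality on $k$-points, hence as morphisms.

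For step (iii), let $\phi:\operatorname{A}^i(X)\to A(k)$ be Galois equivariant for a descent datum $\{\sigma_A\}$ on an abelian variety $A/k$. The ordinary universal property of $(\operatorname{Ab}^i(X),\phi^i_X)$ produces a unique $k$-morphism $f:\operatorname{Ab}^i(X)\to A$ with $f\circ\phi^i_X=\phi$. Compatibility with descent data is the equation $\sigma_A\circ f=f\circ\sigma_{\operatorname{Ab}^i(X)}$, which follows from the chain
\[
\sigma_A^\ast\circ f\circ\phi^i_X=\sigma_A^\ast\circ\phi=\phi\circ\sigma_X^\ast=f\circ\phi^i_X\circ\sigma_X^\ast=f\circ\sigma_{\operatorname{Ab}^i(X)}^\ast\circ\phi^i_X,
\]
together with the surjectivity of $\phi^i_X$ on $k$-points and the fact that morphisms of $k$-varieties are determined by their action on $k$-points.

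The main obstacle is the technical part of step 1: cleanly verifying that $\phi^\sigma$ is a regular homomorphism with target the $k$-abelian variety $\operatorname{Ab}^i(X)^\sigma$, despite the fact that $\phi^i_X\circ\sigma_X^\ast$ itself is \emph{not} regular (its image is induced by a morphism over $\sigma^{-1}$, not a $k$-morphism). This requires careful bookkeeping of the $\sigma$-twists on source and target using the pullback formulas developed in \S\ref{S:GalCyc}. Once this is settled, the remainder of the proof is formal manipulation with the universal property and surjectivity of $\phi^i_X$.
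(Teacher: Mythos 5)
Your proposal follows essentially the same route as the paper: the paper's proof shows that $\phi^i_X\circ\sigma_X^*$ is $\sigma^{-1}$-regular using diagram \eqref{E:WZTsig}, and its Lemma \ref{L:SigReg} performs exactly the maneuver you inline, namely twisting by the base-change isomorphism to obtain an ordinary regular homomorphism into a Galois twist of $\operatorname{Ab}^i(X)$ and then invoking the universal property; your packaging via ``$(\operatorname{Ab}^i(X)^\sigma,\phi^\sigma)$ is itself an algebraic representative'' is the same argument stated through uniqueness of algebraic representatives. One bookkeeping point to fix when you write out the details: with the paper's conventions the composite $\phi^i_X\circ\sigma_X^*\circ w_{Z,t_0}$ is induced by a morphism over $\sigma^{-1}$ (not over $\sigma$), so the twist that renders it an honest $k$-regular homomorphism is $(\widetilde{\sigma^{-1}})^*$ with target $\operatorname{Ab}^i(X)^{\sigma^{-1}}(k)$ rather than $\tilde\sigma^*$ with target $\operatorname{Ab}^i(X)^{\sigma}(k)$; as written, your $\phi^\sigma\circ w_{Z,t_0}$ picks up a stray factor of $\sigma^2$ on $k$-points and is not induced by a $k$-morphism. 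This is the ``careful bookkeeping'' you flagged, and it does not affect the viability of the approach.
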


\begin{rem}\label{R:DescAlgRep}
 Note that by  Remark \ref{R:descendabvar},  $\operatorname{Ab}^i(X)$ descends to an abelian variety $\underline\Ab^i(X)$  over $K$.
\end{rem}

The proof uses the universal property of $\Ab^i(X)$ to transfer
descent data for $X$ to descent data for $\Ab^i(X)$.    Before giving the proof,
we introduce one more notion that we will utilize.  

\begin{dfn} 
  Let $X$ be a smooth projective variety over an algebraically closed field $k$, and let $\sigma\in \operatorname{Aut}(k)$. 
 Given an abelian variety  $A$ over $k$, a  homomorphism of groups
 $$
 \begin{CD}
 \operatorname{A}^i(X)@>\phi>> A(k)
 \end{CD}
 $$
 is said to be \emph{$\sigma$-regular} if for every pair $(T,Z)$ with $T$ a 
 pointed smooth integral 
 $k$-variety, and $Z\subseteq T\times_k X$ a relative
 codimension $i$ cycle
 over $T$,  
 the composition 
 $$
 \begin{CD}
 T(k)@> w_Z >> \operatorname{A}^i(X)@>\phi >>A(k)
 \end{CD}
 $$ 
 is induced by a morphism $g:T\to A$ over $\sigma$\,;
 by this (i.e., $\phi\circ w_Z$ being induced by the morphism  $g$ over $\sigma$) we  mean there is  a commutative diagram of morphisms
 of schemes
 $$
 \xymatrix@R=1em{
 T\ar[r]^g \ar[d]& A \ar[d]\\
 k\ar[r]^\sigma &k
 }
 $$
 such that   for each $k$-point $t:\operatorname{Spec}k\to T$, we have
 $(\phi\circ w_Z)(t)=g_*(t):=g\circ t \circ \sigma^{-1}$.
\end{dfn}

\begin{lem}\label{L:SigReg}  Let $X$ be a smooth projective variety over an algebraically closed field $k$, and let $\sigma\in \operatorname{Aut}(k)$.  Suppose that
 $(\operatorname{Ab}^i(X),\phi^i_X)$ is
 an algebraic
 representative for $\operatorname{A}^i(X)$.
  Given an abelian variety $A/k$ and a $\sigma$-regular homomorphism
 $\phi:\operatorname{A}^i(X)\to A(k)$, there
 exists  a unique morphism $f:\operatorname{Ab}^i(X)\to A$ over $\sigma$   such 
 that $\phi=f_*\circ \phi_X^i$.
\end{lem}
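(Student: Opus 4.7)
The plan is to reduce Lemma \ref{L:SigReg} to the ordinary universal property of the algebraic representative $(\Ab^i(X),\phi^i_X)$ by ``twisting away'' the Galois element $\sigma$ on the target. Concretely, I would form the fibered product $A^\sigma$ of $A$ with $\sigma : \spec k \to \spec k$, together with its structural projection $\tilde\sigma : A^\sigma \to A$. Since $\sigma$ is an isomorphism of schemes (being induced by a field automorphism), $\tilde\sigma$ is also an isomorphism of schemes, and the induced push-forward $\tilde\sigma_* : A^\sigma(k) \to A(k)$ (in the sense of \S\ref{S:PGal}) is a bijection of abelian groups; let $\tilde\sigma^\ast$ denote its inverse.

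The key step is to show that the composition
\[
\phi' := \tilde\sigma^\ast \circ \phi : \A^i(X) \longrightarrow A^\sigma(k)
\]
is a \emph{regular} homomorphism in the sense of Definition \ref{D:AlgRep}. Given a pair $(T,Z)$ with $T$ a pointed smooth connected $k$-variety and $Z \in \chow^i(T \times_K X)$, the $\sigma$-regularity hypothesis supplies a morphism $g : T \to A$ over $\sigma$ inducing $\phi \circ w_Z$. By the universal property of the fibered product defining $A^\sigma$, there is a unique $k$-morphism $g' : T \to A^\sigma$ with $\tilde\sigma \circ g' = g$, and one checks directly from the definition $g_\ast(t) = g \circ t \circ \sigma^{-1}$ that $\tilde\sigma^\ast \circ g_\ast = g'_\ast$. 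Hence $\phi' \circ w_Z$ is induced by the genuine $k$-morphism $g'$, proving $\phi'$ is regular.

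Having established regularity, the universal property of $(\Ab^i(X),\phi^i_X)$ produces a unique $k$-morphism $f' : \Ab^i(X) \to A^\sigma$ with $\phi' = f'_\ast \circ \phi^i_X$. Setting $f := \tilde\sigma \circ f'$ yields a morphism $\Ab^i(X) \to A$ over $\sigma$, and applying $\tilde\sigma_\ast$ to the identity $\phi' = f'_\ast \circ \phi^i_X$ recovers $\phi = f_\ast \circ \phi^i_X$, giving existence. For uniqueness, if $\tilde f : \Ab^i(X) \to A$ is any other morphism over $\sigma$ satisfying $\phi = \tilde f_\ast \circ \phi^i_X$, then $\tilde f$ factors uniquely through $\tilde\sigma$ as a $k$-morphism $\tilde f' : \Ab^i(X) \to A^\sigma$ with $\phi' = \tilde f'_\ast \circ \phi^i_X$; uniqueness in the usual universal property forces $\tilde f' = f'$ and hence $\tilde f = f$.

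The main technical obstacle is purely bookkeeping: one must verify that the push-forward formalism of \S\ref{S:PGal} for morphisms over $\sigma$ interacts correctly with the fibered-product construction, i.e.\ that $\tilde\sigma^\ast$ really converts ``$\sigma$-regular'' into ``regular'' on the nose. This is essentially the content of the universal property of $A^\sigma$ combined with the identity $g = \tilde\sigma \circ g'$, so no serious input beyond careful tracking of base changes is required.
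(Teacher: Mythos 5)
Your proposal is correct and follows essentially the same route as the paper: both twist the target by forming $A^\sigma$ with its projection to $A$, verify that $\tilde\sigma^*\circ\phi$ is a genuine regular homomorphism via the computation $\tilde\sigma^*(g\circ t\circ\sigma^{-1}) = \tilde\sigma^{-1}\circ g\circ t$, invoke the ordinary universal property of $(\operatorname{Ab}^i(X),\phi^i_X)$ to get a $k$-morphism to $A^\sigma$, and compose back with $\tilde\sigma$ to produce $f$ over $\sigma$. The uniqueness argument via factoring any competitor through $\tilde\sigma$ is likewise the paper's appeal to the universal property, just spelled out a little more explicitly.
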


\begin{proof} Let  $\phi:\operatorname{A}^i(X)\to A(k)$ be a $\sigma$-regular
 homomorphism.  
 Consider the morphisms
 $$
 \begin{CD}
 A(k)@>\tilde \sigma ^*>> A^\sigma(k), @. \ \ \ \ \  @. A@>\tilde \sigma ^{-1}>>
 A^\sigma,\\
 \end{CD}
 $$
 with the latter over $\sigma^{-1}\in \operatorname{Aut}(k)$, and where  given a $k$-point $a:\operatorname{Spec} k\to A$, we define  $\tilde \sigma^*(a):=(\tilde \sigma^{-1})_*(a)=\tilde \sigma^{-1} a \sigma$.    
 
 The composition $\tilde \sigma^*\circ \phi:\operatorname{A}^i(X)\to A^\sigma(k)$ is a regular homomorphism.
 Indeed,  given a triple  $(T,Z,t_0)$ with $T$ a smooth integral   $k$-variety,
 $Z$ a   codimension $i$ cycle on 
 $ T\times_k X$, and $t_0:\operatorname{Spec} k\to T$ a $k$-point, there is by
 definition a morphism $g:T\to A$ over $\sigma$ such that for each $k$-point
 $t:\operatorname{Spec}k \to T$ we have $(\phi  w_Z)(t)= gt\sigma^{-1}$.  
 Therefore, $(\tilde \sigma^* \phi w_Z)(t)=\tilde \sigma^*
 (gt\sigma^{-1})=\tilde\sigma^{-1}(gt\sigma^{-1})\sigma=\tilde \sigma^{-1}gt$, 
 so that $\tilde \sigma^*\phi w_Z$ is induced by  $\tilde \sigma^{-1}g:T\to A^\sigma$, and we have
 established that  $\tilde \sigma^*\phi$ is a regular homomorphism.

 It follows from  the universal property of the algebraic representative that 
 there is a unique morphism $\tilde f:\operatorname{Ab}^i(X)\to A^\sigma$ of
 abelian varieties over $k$, such that $\tilde \sigma^*\circ \phi=\tilde
 f\circ \phi^i_X$ on $k$-points.   The morphism $f:=\tilde \sigma\circ \tilde f$
 over $\sigma$
 satisfies $\phi=\tilde \sigma_*\tilde\sigma^* \phi= 
 \tilde \sigma_*\tilde f \phi^i_X=
 f_*\phi^i_X$.
Here we are using that $\tilde \sigma_*\tilde f = f_*$ on $k$-points\,; indeed, given a $k$-point  $a:\operatorname{Spec}k\to \operatorname{Ab}^i(X)$, we have $\tilde \sigma_*\tilde f (a)=\tilde \sigma_*(\tilde fa)=\tilde \sigma \tilde fa\sigma^{-1}=fa\sigma^{-1}=f_*(a)$.  
 The uniqueness
 of $f$ follows  
 again  from the universal property of the algebraic representative. 
\end{proof}

\begin{proof}[Proof of Theorem \ref{T:Desc}] 
 We start by showing that  $k/K$ descent data for $X$
 induces  $k/K$ descent data for $\operatorname{Ab}^i(X)$.
 The first claim is that  the composition
 $\phi_X^i\circ\sigma_X^*:\operatorname{A}^i(X)\to \operatorname{Ab}^i(X)(k)$ is 
 $\sigma^{-1}$-regular.   
 Indeed,  suppose we are given $T/k$, a smooth integral 
 pointed variety,  and  $Z$  a  codimension $i$ cycle
 on $T\times_k X$.
 It was shown in \eqref{E:WZTsig} that we have  a commutative diagram 
 $$
 \xymatrix@C=1.5cm@R=.75cm{
  T(k) \ar@{->}[r]^{w_Z}  \ar@{->}[d]^{\tilde \sigma^*} &   \operatorname{A}^i(X)
  \ar@{->}[r]^{\phi^i_X} \ar@{->}[d]^{\sigma_X^*}  & \operatorname{Ab}^i(X)(k) \\
  T^\sigma(k)  \ar@{->}[r]^{w_{(\tilde \sigma \times \sigma_X)^*Z}} &  
  \operatorname{A}^i(X) \ar@{->}[r]^{\phi^i_X}&  \operatorname{Ab}^i(X)(k). \\
 }
 $$
 Let $g':T^\sigma \to \operatorname{Ab}^i(X)$ be the morphism of $k$-varieties
 inducing the bottom row of the diagram, and set $g:=g'\circ \tilde
 \sigma^{-1}:T\to \operatorname{Ab}^i(X)$ to be the induced morphism over
 $\sigma^{-1}$.   For a $k$-point $t:\operatorname{Spec} k \to T$  we have
 $$
 (\phi^i_X \sigma_X^*w_Z)(t)=(\phi^i_X w_{(\tilde \sigma \times \sigma_X)^*Z}
 \tilde \sigma^*)(t)=g'(\tilde \sigma^*t)=g'\tilde \sigma^{-1}t\sigma
 =gt\sigma=g_*(t).
 $$  
 Therefore we have established that $\phi^i_X\sigma_X^*$ is
 $\sigma^{-1}$-regular.

 By virtue of  the previous lemma, there is a unique  morphism
 $(\sigma_{\operatorname{Ab}})^{-1}:\operatorname{Ab}^i(X)\to
 \operatorname{Ab}^i(X)$
 over $\sigma^{-1}$ making the following diagram commute\,:
 $$
 \xymatrix@C=1.5cm@R=.75cm{
  T(k) \ar@{->}[r]^{w_Z}  \ar@{->}[d]^{\tilde \sigma^*} &   \operatorname{A}^i(X)
  \ar@{->}[r]^{\phi^i_X} \ar@{->}[d]^{\sigma_X^*}  & \operatorname{Ab}^i(X)(k)
  \ar@{->}[d]^{((\sigma_{\operatorname{Ab}})^{-1})_*}\\
  T^\sigma(k)  \ar@{->}[r]^{w_{(\tilde \sigma \times \sigma_X)^*Z}} &  
  \operatorname{A}^i(X) \ar@{->}[r]^{\phi^i_X}&  \operatorname{Ab}^i(X)(k). \\
 }
 $$
 
 Using the universal property of the algebraic representative, one can
 see that the morphisms
 $(\sigma_{\operatorname{Ab}})^{-1}:\operatorname{Ab}^i(X)\to
 \operatorname{Ab}^i(X)$ are isomorphisms, and that the morphisms 
 $\sigma_{\operatorname{Ab}}:=((\sigma_{\operatorname{Ab}})^{-1})^{-1}$ 
 define a lift of the
 action of $\operatorname{Gal}(k/K)$ to $\operatorname{Ab}^i(X)$.
 In particular, the $\sigma_{\operatorname{Ab}}$ provide a  $k/K$ descent datum
 for $\operatorname{Ab}^i(X)$.
 Finally, the commutativity of the diagram
 $$
 \xymatrix@C=1.5cm@R=.75cm{
  T(k) \ar@{->}[r]^{w_Z}  \ar@{->}[d]^{\tilde \sigma^*} &   \operatorname{A}^i(X)
  \ar@{->}[r]^{\phi^i_X} \ar@{->}[d]^{\sigma_X^*}  & \operatorname{Ab}^i(X)(k)
  \ar@{->}[d]^{\sigma_{\operatorname{Ab}}^*}\\
  T^\sigma(k)  \ar@{->}[r]^{w_{(\tilde \sigma \times \sigma_X)^*Z}} &  
  \operatorname{A}^i(X) \ar@{->}[r]^{\phi^i_X}&  \operatorname{Ab}^i(X)(k). \\
 }
 $$
 shows that $\phi^i_X$ is $\operatorname{Gal}(k/K)$-equivariant, and therefore that 
 $(\operatorname{Ab}^i(X),\phi^i_X)$ is a Galois equivariant  algebraic
 representative.   
\end{proof}

\subsection{Some structure results for algebraic representatives}

We would also like to extend some basic structure results from Murre
\cite{murre83} to the case of Galois equivariant regular homomorphisms\,; a crucial ingredient is the following
technical, yet general, proposition  extending a well known result of Weil  \cite[Lem.~9]{weil54}  over an algebraically closed field
to the setting of 
arbitrary perfect fields\,:

\begin{pro}[\cite{ACMVabtriv}] \label{P:algcycles}
 Let $\underline X/K$ be a  scheme of finite type   
 over a 
 perfect
 field $K$ and
 let $X$ be the base-change of $\underline X$ to an algebraic closure $k$ of $K$.
 If $\alpha \in \operatorname{CH}^i(X)$
 is algebraically trivial, then there exist
 an abelian variety $\underline A/K$,  a cycle $\underline  Z$ on $\underline A\cross_K  \underline
 X$, and a pair of
 $k$-points $t_1$, $t_0$ on
 $\underline A$   such that $\alpha = Z_{t_1} - Z_{t_0}$.\qed
\end{pro}

\begin{lem}[{\cite[Lem.~1.6.2, Cor.~1.6.3]{murre83}}] \label{L:M-1.6.2} 
Let  $k$ be an algebraically closed field, let $K \subseteq k$ be a perfect sub-field such that $k/K$ is algebraic, let $X$ be a smooth projective variety over $k$, let $A$ be an abelian variety over $k$,  and assume that each is equipped with $k/K$-descent data.  
Now let  $\phi: \operatorname{A}^i(X)\to  A(k)$ be a   $\operatorname{Gal}(k/K)$-equivariant regular homomorphism.  Then\,:
 
 \begin{alphabetize} 
  \item There is an abelian subvariety $\underline A'\subseteq \underline A$
      defined over $K$ such that
  $\operatorname{Im}(\phi)=A'(k)$.  
  \item There is an abelian variety $\underline D$ defined over $K$ and a cycle
  $\underline Z\in
  \operatorname{CH}^i(\underline D\times_K \underline X)$ such that the
  composition $\begin{CD}
  D(k)@> w_{Z}>> \operatorname{A}^i(X) @>\phi >>A(k)\end{CD}$ surjects onto
  $\operatorname{Im}(\phi)$ and  is induced by a
  $K$-morphism of
  abelian varieties $\underline D\to \underline A$.  
  
  \item  If $\phi$ is surjective, then there is  an abelian variety $\underline B$
  defined over $K$ and a cycle $\underline Z\in
  \operatorname{CH}^i(\underline B\times_K \underline X)$ such that the
  composition $\begin{CD}
  B(k)@> w_{Z}>> \operatorname{A}^i(X) @>\phi >>
  A(k) \end{CD}$ is induced by  a $K$-isogeny of abelian
  varieties  $\underline B\to \underline A$.

  \item 
  If $\phi$ is surjective, then  there exists   a cycle $\underline Z\in
  \operatorname{CH}^i(\underline A\times_K \underline  X)$ such that
  the composition $$\begin{CD}
  A(k)@> w_{Z}>> \operatorname{A}^i(X) @>\phi >>
  A(k) \end{CD}$$ is induced by the  $K$-morphism  $r\cdot
  \operatorname{Id}_{\underline A}:\underline A\to \underline A$, for some
  non-zero integer $r$.   
 \end{alphabetize}
\end{lem}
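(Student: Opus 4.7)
The plan is to adapt Murre's original proofs of Lemma 1.6.2 and Corollary 1.6.3 of \cite{murre83} to the Galois-equivariant setting, using Proposition \ref{P:algcycles} as the key ingredient which asserts that every algebraically trivial cycle on $X$ can be realized as a difference of fibers in a family parametrized by a geometrically integral smooth projective variety over $K$.

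The main technical lemma underlying all four parts is the following descent principle for induced morphisms. Suppose $\underline{T}/K$ is a geometrically integral smooth projective $K$-variety equipped with a $K$-rational point $t_0$, and $\underline{Z}$ is a cycle on $\underline{T}\times_K \underline{X}$. Then the $k$-morphism $g: T \to A$ produced by regularity of $\phi$ is automatically $\operatorname{Gal}(k/K)$-equivariant. Indeed, $Z$ on $T\times_k X$ is invariant under the diagonal Galois action since $\underline{Z}$ is defined over $K$, so by formula \eqref{E:CF2} and the Galois equivariance of $\phi$ we obtain $g_*\circ \sigma_T^* = \sigma_A^*\circ g_*$ on $T(k)$, which forces $g$ to descend to a $K$-morphism $\underline{g}:\underline{T}\to \underline{A}$. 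By the universal property of the Albanese, $\underline{g}$ factors through $\underline{\operatorname{Alb}(T)}\to \underline{A}$, a morphism of abelian varieties over $K$.

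For (a), Proposition \ref{P:algcycles} guarantees that $\operatorname{Im}(\phi)$ is covered by the images (on $k$-points) of the resulting $K$-morphisms $\underline{\operatorname{Alb}(T_\nu)}\to \underline{A}$ as $\underline{T}_\nu$ ranges over such $K$-varieties carrying the cycles produced by the proposition. Each such image is an abelian subvariety of $\underline{A}$ defined over $K$; by finite-dimensionality of $\underline{A}$, finitely many $\underline{T}_1,\dots,\underline{T}_n$ suffice, and the sum of the images is the required $\underline{A}'$. For (b), take $\underline{D}_0 = \prod_\nu \underline{\operatorname{Alb}(T_\nu)}$, whose associated summation $K$-morphism to $\underline{A}$ surjects onto $\underline{A}'$; to realize this via a single cycle, one follows Murre's construction, passing through a high self-product $\underline{T}_\nu^N$ with $\underline{Z}_\nu$ pulled back and summed along projections, which produces the desired cycle on $\underline{D}_0\times_K \underline{X}$ entirely over $K$. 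One then sets $\underline{D} = \underline{D}_0$.

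Part (c) follows from (b) applied to the surjection onto $\underline{A}$ (which holds by hypothesis) together with Poincar\'e complete reducibility, valid over $K$ since $K$ is perfect: inside $\underline{D}$ we extract an abelian subvariety $\underline{B}$ mapping isogenously onto $\underline{A}$, and pull back the cycle from $\underline{D}\times_K \underline{X}$ to $\underline{B}\times_K \underline{X}$. For (d), let $r = \deg(\underline{B}\to\underline{A})$ and let $\underline{B}\to\underline{A}$ have dual isogeny $\underline{A}\to\underline{B}$ with composition $[r]_{\underline{A}}$; pulling back the cycle of (c) along $(\underline{A}\to\underline{B})\times \operatorname{id}_{\underline{X}}$ yields a cycle on $\underline{A}\times_K \underline{X}$ whose $w_{\underline{Z}}$ composed with $\phi$ is induced by multiplication by $r$ on $\underline{A}$. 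The main obstacle throughout is ensuring that Murre's constructions (passage to $T^N$, the Albanese factorization, the use of a splitting up to isogeny) can be performed with $K$-rational data; this is precisely where regularity of the Galois-equivariant $\phi$, combined with Proposition \ref{P:algcycles} and Chow rigidity from \S \ref{S:rigid}, allow us to replace $k$-level constructions by $K$-level ones.
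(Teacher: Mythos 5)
Your overall strategy matches the paper's: parametrize algebraically trivial classes by $K$-rational data, use the Galois equivariance of $\phi$ to descend the induced morphisms, run a maximality argument for (a) and (b), and finish (c) and (d) with Poincar\'e reducibility and the dual isogeny. There is, however, a genuine gap in your main technical lemma. You assume that the parametrizing variety $\underline T/K$ carries a \emph{$K$-rational} point $t_0$, and you really need this: without it, $w_{Z,t_0}$ is defined only relative to a $k$-point, and the conjugate morphism satisfies $g^\sigma(t)=\phi(Z_t - Z_{\sigma_T^*t_0})$, which differs from $g(t)$ by the constant $\phi(Z_{t_0}-Z_{\sigma_T^*t_0})$ whenever $\sigma_T^*t_0\neq t_0$. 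So $g$ is Galois-equivariant only up to translation and need not descend to a $K$-morphism $\underline T\to \underline A$. Proposition \ref{P:algcycles} supplies only a geometrically integral smooth projective $\underline T/K$ with a pair of \emph{$k$-points}; a geometrically integral $K$-variety need not have any $K$-point, so the hypothesis of your lemma is not met by the input you cite. (This is exactly the difficulty flagged in the remark at the end of \S \ref{S:DiffMaps}.)

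The repair is to use the stronger form of the parametrization, Lemma \ref{L:Lang}, which produces an abelian variety $\underline D/K$, a cycle $\underline Y$ on $\underline D\times_K\underline X$, and $k$-points $t_1,t_0$ with $\alpha=Y_{t_1}-Y_{t_0}$. The identity of $\underline D$ is a canonical $K$-rational base point, $w_Y$ based at $0$ is Galois-equivariant, $\phi\circ w_Y$ is induced by a $K$-homomorphism $\underline D\to\underline A$ (no Albanese factorization is needed, since the source is already an abelian variety), and $\phi(\alpha)$ lies in $\operatorname{Im}(\phi\circ w_Y)$ because that image is a subgroup. From there the finiteness/maximality argument for (a)--(b) goes through as you describe: the paper runs it by choosing $(\underline D',\underline Y')$ with $\operatorname{Im}(\phi\circ w_{Y'})$ of maximal dimension and deriving a contradiction from $\underline Y'''=\pi'^*\underline Y'+\pi''^*\underline Y''$ on $\underline D'\times_K\underline D''\times_K\underline X$, which is the same device as your product-and-sum construction. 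Your steps (c) and (d) coincide with the paper's.
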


\begin{proof} This is  
 \cite[Lem.~1.6.2, Cor.~1.6.3]{murre83} in the case $K=k$. 
 We will see that, with the arithmetic input of Proposition \ref{P:algcycles}, the same
 proof establishes the present lemma.
 As (a) follows from the proof of (b), we begin by proving (b).  
 
 (b)  The key point is that given $\alpha\in \operatorname{Im}(\phi)$,   there
 exist an abelian variety $\underline D$ defined over $K$ and a
 $\underline Y\in \operatorname{CH}^i(\underline  D\times_K \underline X)$ such
 that $\alpha\in
 \operatorname{Im}(\phi\circ w_{Y})$.   In the case $k=K$, this is
 \cite[Lem.~9]{weil54}\,;  in the case $k\ne K$, this is Proposition
 \ref{P:algcycles}.
 
 Now for all such pairs $(\underline  D, \underline  Y)$ consider the abelian
 sub-varieties
 $\operatorname{Im}(\phi\circ w_{Y})$. Note that, since we are assuming that $\phi$ is $\gal(k/K)$-equivariant, Remark \ref{R:desendmorphismabelian} says that $\phi\circ w_{Y}$ descends to $K$.
Clearly we can take a pair
 $(\underline  D',\underline  Y')$ such that $\operatorname{Im}(\phi\circ
 w_{Y'})$ is of maximal
 dimension.  The claim is that   $\operatorname{Im}(\phi\circ
 w_{Y'})=\operatorname{Im}(\phi)$.  
 If not, let $\phi(\alpha)\notin \operatorname{Im}(\phi\circ w_{Y'})$,
 then by the assertion of the first paragraph, there exists a pair  $(\underline
 D'',\underline  Y'')$
 such that $\phi(\alpha)\in \operatorname{Im}(\phi\circ w_{Y''})$.  Now
 consider $\underline  D''':=\underline  D'\times_K \underline  D''$ and the
 projections $\pi':\underline  D'\times_K\underline  D''\times_K\underline  X
 \to \underline  D'\times_K\underline  X$ and
 $\pi'':\underline  D'\times_K\underline  D''\times_K\underline  X \to \underline
 D''\times_K\underline  X$.  Take 
 $$
 \underline  Y''':=\pi'^*\underline  Y'+\pi''^*\underline  Y''.
 $$
 Then $\operatorname{Im}(\phi\circ w_{Y'''})\supset
 \operatorname{Im}(\phi\circ w_{Y'})$ and $\operatorname{Im}(\phi\circ
 w_{Y'''})\supset \operatorname{Im}(\phi\circ w_{Y''})$, a
 contradiction, proving the claim, and thus (b).     
 
 (c)  We have a surjection $\underline  D\to \underline  A$, and there exists a
 $K$-sub-abelian variety $\underline  B\subseteq \underline  D$ such that the
 induced map $\underline  B\to \underline  A$ is an isogeny.  The restricted
 cycle $\underline  Z|_{\underline  B\times_K\underline   X}$ provides the
 desired cycle.

 (d) This follows from (c).   The dual  isogeny
 $g:\underline  A\to \underline  B$ has the property that there is a non-zero
 integer $r\in \mathbb Z$ such that the composition
 $\underline  A\to \underline  B\to
 \underline  A$ is given by $r\cdot
 \operatorname{Id}_{\underline  A}$.    The cycle
 $(g\times_K\operatorname{Id}_{\underline  X})^*\underline  Z$ gives the desired
 cycle.    
\end{proof}

In light of Murre's results \cite{murre83}  (see Theorem \ref{T:Murre}), we
have the following consequence (see also Corollary \ref{C:cycles3} and Remark \ref{R:corr}, which establishes 
a
stronger statement  for cohomology with $\mathbb Q_\ell$-coefficients, under the
hypothesis $K\subseteq \mathbb C$).

\begin{cor} \label{C:Murre}  
Let $X$ be a smooth projective variety over a perfect field $K$.
 Let $\underline\Ab^2(X)/K$ be the model   of $\Ab^2(X)$ from Theorem \ref{T:Desc}.
There is a
 natural inclusion of $\operatorname{Gal}(k/K)$-representations 
 $$
 H^1(\widehat {\underline{\Ab}^2(X)}_k,\rat_\ell) \hookrightarrow
 H^3(X,\rat_\ell(1)).
 $$
 For almost all $\ell$ there is an inclusion
 $H^1(\widehat{\underline{\Ab}^2(X)}_k,\mathbb Z_\ell) \hookrightarrow
 H^3(X,\mathbb Z_\ell(1))$ of $\operatorname{Gal}(k/K)$-modules.
\end{cor}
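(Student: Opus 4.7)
The plan is to follow Murre's construction of the inclusion as summarized in Remark~\ref{R:MuMaps}, and to verify that each of the two constituent maps can be realized over the base field $K$, hence descends to a Galois-equivariant morphism. By Theorem~\ref{T:Desc}, $(\operatorname{Ab}^2(X), \phi^2_X)$ is a Galois-equivariant algebraic representative, so $\operatorname{Ab}^2(X)$ carries canonical $k/K$ descent data giving $\underline{\operatorname{Ab}}^2(X)$ over $K$; in particular, the dual $\widehat{\underline{\operatorname{Ab}}^2(X)}$ is defined over $K$, and the natural identification $H^1(\widehat{\underline{\operatorname{Ab}}^2(X)}_k, \mathbb{Z}_\ell(1)) = T_\ell \operatorname{Ab}^2(X)$ is an isomorphism of $\operatorname{Gal}(k/K)$-modules.

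For the first map, I would apply Lemma~\ref{L:M-1.6.2}(d) (the Galois-equivariant analogue of \cite[Cor.~1.6.3]{murre83}) to produce a nonzero integer $r$ and a cycle $\underline{Z} \in \operatorname{CH}^2(\underline{\operatorname{Ab}}^2(X) \times_K \underline{X})$ such that $\phi^2_X \circ w_Z$ is induced by $r \cdot \operatorname{Id}$. Since $\underline{Z}$ is defined over $K$, the induced homomorphism of Tate modules
\[
T_\ell w_Z : T_\ell \operatorname{Ab}^2(X) \longrightarrow T_\ell \operatorname{A}^2(X)
\]
is $\operatorname{Gal}(k/K)$-equivariant, and for every $\ell$ with $\ell \nmid r$ it is injective, since its composition with $T_\ell \phi^2_X$ is multiplication by $r$. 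For the second map, I would follow Remark~\ref{R:MuMaps} and observe that Bloch's injection $T_\ell \operatorname{CH}^2(X) \hookrightarrow H^3_{\operatorname{et}}(X, \mathbb{Z}_\ell(2))$ --- valid whenever $H^3_{\operatorname{et}}(X, \mathbb{Z}_\ell(2))$ is torsion free, which is the case for almost all $\ell$ by Gabber's theorem --- is assembled from the Bloch--Ogus spectral sequence and the Merkurjev--Suslin identification between the $K$-theory sheaf $\mathcal{K}_2$ and the Bloch--Ogus sheaf $\mathbf{H}^2(\mu_{\ell^\nu}^{\otimes 2})$. All of these ingredients are natural constructions on the Zariski and \'etale sites of the $K$-scheme $\underline{X}$, so the relevant diagrams descend to $K$, and base-change to $k$ makes every arrow $\operatorname{Gal}(k/K)$-equivariant. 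Combined with the tautological equivariance of $T_\ell \operatorname{A}^2(X) \hookrightarrow T_\ell \operatorname{CH}^2(X)$, Bloch's portion of Murre's injection is Galois equivariant.

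Composing these two steps and untwisting gives a $\operatorname{Gal}(k/K)$-equivariant inclusion $H^1(\widehat{\underline{\operatorname{Ab}}^2(X)}_k, \mathbb{Z}_\ell) \hookrightarrow H^3(X, \mathbb{Z}_\ell(1))$ whenever $\ell \nmid r$ and $H^3_{\operatorname{et}}(X,\mathbb{Z}_\ell(2))$ is torsion free, i.e.\ for almost all $\ell$. Tensoring with $\mathbb{Q}_\ell$ removes both restrictions and yields the rational statement for every prime $\ell \neq \operatorname{char}(k)$. The only real obstacle to overcome is the bookkeeping required to confirm that every construction used by Murre ($K$-theory sheaves, the Bloch--Ogus spectral sequence, cycle class maps, duality of abelian varieties, Tate modules) is natural in the $K$-scheme $\underline{X}$; once this is in hand, Galois equivariance is automatic, and the substance of the corollary is already encoded in Lemma~\ref{L:M-1.6.2}(d) and Theorem~\ref{T:Desc}.
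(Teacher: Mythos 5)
Your proposal is correct and follows essentially the same route as the paper: it invokes Lemma \ref{L:M-1.6.2} to produce a $K$-rational cycle $\underline Z$ whose composition with $\phi^2_X$ is an isogeny, deduces the Galois-equivariant inclusion $T_\ell\operatorname{Ab}^2(X)\hookrightarrow T_\ell\operatorname{A}^2(X)$ for $\ell$ prime to the degree, and observes that Bloch's map $T_\ell\operatorname{CH}^2(X)\hookrightarrow H^3(X,\integ_\ell(2))$ is equivariant because it is assembled from natural sheaf-theoretic constructions defined over $K$. The only cosmetic difference is that the paper works at the level of $\ell^\nu$-torsion (exhibiting $\operatorname{Ab}^2(X)[\ell^\nu]$ as a direct summand of $\operatorname{A}^2(X)[\ell^\nu]$ before passing to the limit), whereas you argue directly on Tate modules; both yield the same conclusion.
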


\begin{proof}
 We aim to show that the two inclusions \eqref{E:degWZ}, i.e.,
 $T_\ell\Ab^2(X)\hookrightarrow 
 T_\ell\operatorname{A}^2(X)$,   and \eqref{E:ChtoH3}, i.e.,  $T_\ell
 \operatorname{CH}^2(X)\hookrightarrow H^{3}(X,\mathbb
 Z_\ell(2))
 $ 
 constructed by Murre are $\operatorname{Gal}(k/K)$-equivariant.
 The second inclusion  \eqref{E:ChtoH3} can be seen to be   equivariant since the
 map is constructed via natural maps of sheaves, all of which have natural
 $\operatorname{Gal}(k/K)$-actions.
 Let us now show that the first inclusion \eqref{E:degWZ} can be taken to be  
 equivariant. 
 Using the modification of \cite[Cor.~1.6.3]{murre83} given in Lemma
 \ref{L:M-1.6.2},  
 there  is a cycle  $\underline Z\in \operatorname{CH}^2(
 \underline{\operatorname{Ab}}^2(X) \times_{K}\underline X)$ such that 
 $$
 \phi^2_{X} \circ w_{Z}:\operatorname{Ab}^2(X)(k)\to \operatorname{A}^2(X)\to
 \operatorname{Ab}^2(X)(k)
 $$
 is induced by a $K$-isogeny
 $ f:\underline{\operatorname{Ab}}^2(X)\to 
 \underline{\operatorname{Ab}}^2(X)$.   We also saw in the proof of Theorem 
 \ref{T:Desc}
 that both $w_{Z}$ and $\phi^2_{X}$ are Galois equivariant. 
 Now take a prime number $\ell$ such that $(\ell,\deg (f))=1$ and $\ell$ is invertible in $k$\,;
 on points of order $\ell^\nu$ we have
 $$
 f :
 {\operatorname{Ab}}^2(X)[\ell^\nu]\stackrel{\sim}{\to}{\operatorname{Ab}}^2(X)[\ell^\nu],
 $$
 as Galois representations,
 for all $\nu>0$.  Therefore 
 ${\operatorname{Ab}}^2(X)[\ell^\nu]$ is a direct summand of the Galois representation
 $\operatorname{A}^2(X)[\ell^\nu]$ and we obtain a Galois equivariant inclusion
 $$
 {\operatorname{Ab}}^2(X)[\ell^\nu] \hookrightarrow
 \operatorname{A}^2(X)[\ell^\nu].
 $$ 
 Taking the associated Tate modules completes the proof. 
\end{proof}

\section{Phantoms via algebraic representatives} \label{S:pThAII}

We now prove Theorem \ref{T:jacdescent} and complete the proof of Theorem \ref{T:cycles}, by showing in Theorem \ref{T:cycles3'} that the image of the Abel--Jacobi map in the  intermediate Jacobian $J^3(X_\cx)$ descends to a field of definition of $X$.

\begin{teo}\label{T:cycles3'}
 Suppose  $X$ is a smooth 
 projective variety over a field $K\subseteq \mathbb C$,
 and  $n$ is a non-negative integer.  Assume  that
 $\operatorname{A}^{n+1}(X_{\mathbb C})$ 
 admits  $(J^{2n+1}_a(X_{\mathbb C}),AJ)$ as an algebraic representative,
 where 
 $J^{2n+1}_a(X_{\mathbb C})$ is the
 image of the Abel--Jacobi map $AJ:\operatorname{A}^{n+1}(X_{\mathbb C})\to
 J^{2n+1}(X_{\mathbb C})$. 
 Then  $J^{2n+1}_a(X_{\mathbb C})$ has a distinguished model $J$ over $K$  
 making $AJ:\operatorname{A}^{n+1}(X_{\mathbb C})\to
 J_a^{2n+1}(X_{\mathbb C})$ an $\operatorname{Aut}(\cx/K)$-equivariant homomorphism,
 and
 there is a correspondence $\gamma$ on $\widehat J\times_K X$ inducing for each 
 prime
 number $\ell$  an  inclusion of
 $\operatorname{Gal}(K)$-representations 
 \begin{equation}\label{E:cycles3'}
 H^1(\widehat J_{\bar K},\mathbb Q_\ell)\stackrel{\gamma_*}{\hookrightarrow}
 H^{2n+1}(X_{\bar K},\mathbb Q_\ell(n)),
 \end{equation}
 with image $\coniveau ^n H^{2n+1}(X_{\bar K},\mathbb Q_\ell(n))$.  Consequently,
 if in addition $H^{2n+1}(X_{\mathbb C},\mathbb Q)$ is of geometric  coniveau
 $n$, then \eqref{E:cycles3'} is an isomorphism.  
\end{teo}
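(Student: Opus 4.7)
The plan is to run in parallel the following three ingredients: Galois descent of the algebraic representative (Theorem \ref{T:Desc}), algebraically closed base change (Theorem \ref{T:ACBC}) to identify the descended object with the image of the Abel--Jacobi map, and the existence of a $K$-rational ``universal cycle'' from Lemma \ref{L:M-1.6.2}(d). The Hodge-theoretic computation of Lemma \ref{L:TJa} and Corollary \ref{C:AJmap} then produce the desired identification of the image with $\coniveau^n H^{2n+1}(X_{\bar K},\rat_\ell(n))$, and under the coniveau hypothesis the inclusion becomes an isomorphism.

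First I would apply Theorem \ref{T:Desc}: since $K \subseteq \cx$ is of characteristic zero and hence perfect, the $\bar K/K$ descent datum on $X_{\bar K}$ promotes the algebraic representative $(\Ab^{n+1}(X_{\bar K}),\phi^{n+1}_{X_{\bar K}})$ to a $\operatorname{Gal}(\bar K/K)$-equivariant algebraic representative; descent yields an abelian variety $J$ over $K$ with $J_{\bar K}\iso \Ab^{n+1}(X_{\bar K})$. Next, Theorem \ref{T:ACBC}(b), applied to the extension $\bar K \subseteq \cx$ of algebraically closed fields in characteristic zero, gives $\Ab^{n+1}(X_{\bar K})_\cx \iso \Ab^{n+1}(X_\cx)$. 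By the standing hypothesis $\Ab^{n+1}(X_\cx) = J^{2n+1}_a(X_\cx)$, so $J_\cx \iso J^{2n+1}_a(X_\cx)$ as abelian varieties.

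For the correspondence, invoke Lemma \ref{L:M-1.6.2}(d): the surjective Galois-equivariant regular homomorphism $\phi^{n+1}_X$ yields a cycle $\underline Z \in \chow^{n+1}(J\cross_K X)_\rat$ such that $\phi^{n+1}_X \circ w_Z$ is induced by $r\cdot\operatorname{Id}_J$ for some $r\neq 0$. From $\underline Z$ I would produce a $K$-rational correspondence $\gamma$ on $\hat J \cross_K X$ by contracting with the Poincar\'e line bundle on $J\cross_K \hat J$ (both of which are defined over $K$); concretely, letting $\mathcal P$ denote the Poincar\'e bundle and $p_{JX}, p_{\hat JX}, p_{J\hat J}$ the obvious projections from $J\cross_K \hat J\cross_K X$, set
\[
\gamma \;:=\; (p_{\hat JX})_*\bigl(p_{JX}^*\underline Z \cdot p_{J\hat J}^*\operatorname{ch}(\mathcal P)\bigr) \;\in\; \chow^*(\hat J\cross_K X)_\rat,
\]
and take its component of the appropriate codimension. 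The content of this step is that under the identification $H^1(J_{\bar K},\rat_\ell)^\vee \cong H^1(\hat J_{\bar K},\rat_\ell(1))$ induced by $\mathcal P$, the K\"unneth component of $[Z]$ in $H^1(J_{\bar K})\tensor H^{2n+1}(X_{\bar K},\rat_\ell(n{+}1))$ coincides with the action of $\gamma$.

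Finally, to compute the image and verify injectivity, I would pass to $\cx$ and use Lemma \ref{L:TJa}: the induced Hodge-theoretic map $H^1(\hat J_\cx,\rat) \to H^{2n+1}(X_\cx,\rat(n))$ factors through the image of the Abel--Jacobi map, and the factorization $\phi^{n+1}_X\circ w_Z = r\cdot\operatorname{Id}_J$ forces this map to be precisely (a multiple of) the canonical inclusion $H^1(\widehat{J^{2n+1}_a(X_\cx)},\rat) = \coniveau^n H^{2n+1}(X_\cx,\rat(n)) \hookrightarrow H^{2n+1}(X_\cx,\rat(n))$. A dimension count (both sides have dimension $2\dim J$) gives injectivity. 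Transferring back via the comparison isomorphisms \eqref{E:CompIsom} yields the claim over $\bar K$. Under the additional coniveau hypothesis, Corollary \ref{C:AJmap} (or directly the definition) gives $\coniveau^n H^{2n+1}(X_{\bar K},\rat_\ell(n)) = H^{2n+1}(X_{\bar K},\rat_\ell(n))$, so $\gamma_*$ is an isomorphism. The main obstacle is the bookkeeping in the third step: verifying that the action of $\gamma$ on $H^1(\hat J)$ really agrees, up to the nonzero scalar $r$, with the K\"unneth component of $[Z]$ under the Poincar\'e duality identification, and doing so over $K$ so that the resulting map is automatically $\operatorname{Gal}(K)$-equivariant.
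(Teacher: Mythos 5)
Your proposal is correct and follows essentially the same route as the paper's proof: Theorem \ref{T:Desc} and Theorem \ref{T:ACBC} give the distinguished model $J/K$ with $J_\cx\cong J^{2n+1}_a(X_\cx)$, Lemma \ref{L:M-1.6.2}(d) supplies the $K$-rational cycle $\underline Z$ with $\phi^{n+1}_{X}\circ w_{Z}$ induced by $r\cdot\operatorname{Id}_J$, and Lemma \ref{L:TJa} identifies the image with $\coniveau^n H^{2n+1}(X_{\bar K},\rat_\ell(n))$. The only (harmless) divergence is in realizing the duality $H^1(\hat J_{\bar K},\rat_\ell)\cong H^{2\dim J-1}(J_{\bar K},\rat_\ell(\dim J-1))$ by a $K$-rational correspondence: you contract with the Poincar\'e bundle, whereas the paper composes the graph of a polarization $\lambda:\hat J\to J$ with the $(\dim J-1)$-fold self-intersection $\Theta$ of the polarization class; both are defined over $K$ and agree up to a nonzero scalar, so this is a cosmetic difference rather than a different argument.
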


\begin{proof} 
Let  $X$ be a smooth  projective
variety  defined over
$K\subseteq \mathbb C$ and assume that $\operatorname{A}^{n+1}(X_{\mathbb C})$ admits an algebraic
representative $(\operatorname{Ab}^{n+1}(X_{\mathbb
	C}),\phi^{n+1}_{X_{\mathbb C}})$.
We have seen in Theorem \ref{T:ACBC}
that $\operatorname{A}^{n+1}(X_{\bar
	K})$  admits an algebraic representative $(\operatorname{Ab}^{n+1}(X_{\bar
	K}),\phi^{n+1}_{X_{\bar K}})$ with $\operatorname{Ab}^{n+1}(X_{\bar K})_{\mathbb C}\cong
\operatorname{Ab}^{n+1}(X_{\mathbb
	C})$, and in Theorem  \ref{T:Desc} that $\operatorname{Ab}^{n+1}(X_{\bar K})$
admits a distinguished model $J$ over $K$. (Note that the distinguished model $J$ over $K$ is indeed an abelian variety\,; see Remark \ref{R:descendabvar}.)  
We also saw in Theorem \ref{T:ACBC}
that  $(\operatorname{Ab}^{n+1}(X_{\bar
	K}),\phi^{n+1}_{X_{\bar K}})$ is the $\cx/\bar K$-trace of $(\operatorname{Ab}^{n+1}(X_{\mathbb
	C}),\phi^{n+1}_{X_{\mathbb C}})$ and in Theorem  \ref{T:Desc} that $\operatorname{Ab}^{n+1}(X_{\bar
	K})$ is equipped with Galois descent datum making $\phi^{n+1}_{X_{\bar K}}$ Galois-equivariant. Since the $\cx/\bar K$-trace is constructed by
	fpqc descent \cite{conradtrace},
	 we see that $\phi^{n+1}_{X_{\mathbb C}} : \operatorname{A}^{n+1}(X_{\mathbb C}) \to \operatorname{Ab}^{n+1}(X_{\mathbb
	C})$ is $\operatorname{Aut}(\cx/K)$-equivariant. Moreover, since the regular homomorphism $\phi^{n+1}_{X_{\mathbb C}} $ is surjective, the model $J$ is \emph{distinguished} in the sense that it is uniquely
 determined by the natural Galois action on $\operatorname{A}^{n+1}(X_\cx)$.

 We now make a general observation.  
 Let $X_{\mathbb C}$ be a
 smooth projective
 variety defined over $\mathbb C$.  
 Given an abelian variety $A_{\mathbb C}$ over $\mathbb C$ and a   correspondence $Z_{\mathbb C}\in \operatorname{CH}^{n+1}(A_{\mathbb C} \times_{\mathbb C} X_{\mathbb
  C})$, then using the functoriality of the Abel--Jacobi map with respect to the action of correspondences
 we have a commutative diagram 
 \begin{equation}\label{E:Z*diag}
 \xymatrix{
  A_{\mathbb C}(\mathbb C) \ar[r] \ar@{->>}[rd] \ar@/^1.5pc/@{->}[rr]^{w_{Z_{\mathbb C}}} &
  \operatorname{A}_0(A_{\mathbb C})
  \ar[r]^<>(.5){Z_*} \ar[d]^{AJ}&  \operatorname{A}^{n+1}(X_{\mathbb C})
  \ar[d]^{AJ}\\
  &\operatorname{Alb}(A_{\mathbb C})(\mathbb C) \ar@{->}[r]& J^{2n+1}(X_{\mathbb C})(\mathbb
  C).\\
 }
 \end{equation}
 The differential of the map  $\operatorname{Alb}(A_{\mathbb C})\to J^{2n+1}(X_{\mathbb C})$
 in the bottom row  is the complexification of the 
 map
 \begin{equation}\label{E:Z*map}
 Z_*:H^{2\dim A-1}(A_{\mathbb C},\mathbb Q(\dim A_{\mathbb C} -1))\to H^{2n+1}(X_{\mathbb C},\mathbb
 Q(n)),
 \end{equation}
 induced by the correspondence $Z_{\mathbb C}$ (e.g., \cite[Thm.~12.17]{voisinI}).

 The natural identification $H^1(\widehat A_{\mathbb C},\mathbb Q)\to H^{2\dim A-1}(A_{\mathbb C},\mathbb
 Q(\dim A_{\mathbb C}-1))$ can be obtained via correspondences as the composition of
 isomorphisms $\Theta_* \circ \Gamma_{\lambda *}$, where $\Gamma_\lambda$ is the
 graph of a polarization   $\lambda:\widehat A_{\mathbb C}\to
 A_{\mathbb C}$, and 
 $\Theta_* : H^1(A_{\mathbb C},\mathbb Q)\to  H^{2\dim A-1}(A_{\mathbb C},\mathbb Q(\dim A_{\mathbb C} -1))$ is
 induced
 by the $(\dim A_{\mathbb C}-1)$-fold intersection $\Theta$ of  the
 class of the dual  polarization on $A_{\mathbb C}$.
 In conclusion, we have a morphism 
 \begin{equation}\label{E:HdgMor}
 \begin{CD}
 H^1(\widehat A_{\mathbb C},\mathbb Q) @>Z_*\circ \Theta_*\circ \Gamma_{\lambda *}>>
 H^{2n+1}(X_{\mathbb C},\mathbb Q(n)).
 \end{CD}
 \end{equation}
 Finally consider the special case where  $(A_{\mathbb C},\phi)=(J_a^{2n+1},AJ)$, and where the cycle  $Z_{\mathbb C}$ is taken as in  \cite[Cor.~1.6.3]{murre83} (see Lemma \ref{L:M-1.6.2}(d)) so that $AJ\circ w_{Z_{\mathbb C}}$ is given by $r\cdot \operatorname{Id}_{J_a^{2n+1}}$ for some non-zero integer $r$. 
 The diagram \eqref{E:Z*diag}   then shows that \eqref{E:HdgMor} is injective, with image $\coniveau ^nH^{2n+1}(X_{\mathbb C},\mathbb Q)$.

  With this set-up, Lemma \ref{L:M-1.6.2} implies that there exists   a cycle $Z\in
 \operatorname{CH}^{n+1}(J\times_K X)$ such that the composition $$
 \begin{CD}
 \operatorname{Ab}^{n+1}(X_{\bar K})@> w_{Z_{\bar K}}>>
 \operatorname{A}^{n+1}(X_{\bar K}) @>\phi^{n+1}_{X_{\bar K}} >>
 \operatorname{Ab}^{n+1}(X_{\bar K})\end{CD}
 $$
 is induced by the  $K$-morphism  $r\cdot \operatorname{Id}_{J}:J\to J$ for some
 non-zero integer $r$, and the maps  $w_{Z_{\bar K}}$ and $\phi^{n+1}_{X_{\bar
   K}}$ are
 $\operatorname{Gal}(K)$-equivariant. 
 Let $\lambda:\widehat J\to J$ be a polarization and let  $\Theta$ be the $(\dim J
 -1)$-fold self-intersection of the class of the 
 polarization on $J$.
 The correspondence $Z\circ \Theta \circ \Gamma_\lambda$, defined over $K$, 
 induces a  morphism of
 $\operatorname{Gal}(K)$-representations
 \begin{equation}\label{E:212'} 
 \begin{CD}
 H^1(\widehat J_{\bar K},\mathbb Q_\ell)@>Z_*\circ \Theta_*\circ \Gamma_{\lambda *}>>
 H^{2n+1}(X_{\bar
  K},\mathbb Q_\ell(n)).  
 \end{CD}
 \end{equation}
 
Finally,  if we assume further that $(J^{2n+1}_a(X_{\mathbb C}), AJ)\cong
 (\operatorname{Ab}^{n+1}(X_{\mathbb C}),\phi^{n+1}_{X_{\mathbb C}})$, then 
 \eqref{E:HdgMor} and the
 comparison isomorphisms \eqref{E:CompIsom} imply that the map  \eqref{E:212'} is
 an
 inclusion, with image \linebreak $\coniveau ^n H^{2n+1}(X_{\bar K},\mathbb Q_\ell(n))$.  
The comparison isomorphisms \eqref{E:CompIsom} also show that
 $H^{2n+1}(X_{\mathbb C},\mathbb Q)$ is of geometric coniveau $n$ if and only if 
 $H^{2n+1}(X_{\overline K},\mathbb Q_\ell)$ is of geometric coniveau $n$\,;
 therefore
 \eqref{E:212'} is an isomorphism under the hypotheses given in the theorem.  
\end{proof}

\begin{cor}\label{C:cycles3} 
 Suppose that    $X$ is a smooth 
 projective  variety over a field $K\subseteq
 \mathbb C$.  
 The  abelian variety  $J^3_{a}(X_{\mathbb C})$
 has a distinguished  model $ J$ over $K$.  There is a correspondence $\gamma$ on
 $\widehat J\times_K X$ such that for each prime number
 $\ell$ the correspondence induces  an inclusion of 
 $\operatorname{Gal}(K)$-representations
 \begin{equation}\label{E:Ccycles3}
 H^1( \widehat J_{\bar K},\rat_\ell)\stackrel{\gamma_*}{\hookrightarrow}  H^3(X_{\bar
  K},\rat_\ell(1)),
 \end{equation}
 with image $\coniveau^1H^3(X_{\bar K},\rat_\ell(1))$.   
 Consequently, if $H^3(X_{\mathbb C},\mathbb Q)$ is of geometric coniveau $1$ (e.g., if $X_{\mathbb C}$ is a uni-ruled threefold),
 then \eqref{E:Ccycles3} is an isomorphism.  
\end{cor}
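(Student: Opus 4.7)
The plan is to deduce Corollary \ref{C:cycles3} directly from Theorem \ref{T:cycles3'} applied with $n=1$, by verifying the two hypotheses on algebraic representatives in this case. Specifically, one must check that $\operatorname{A}^{2}(X_{\bar K})$ admits an algebraic representative, and that $(J^{3}_a(X_{\mathbb{C}}), AJ)$ serves as an algebraic representative for $\operatorname{A}^{2}(X_{\mathbb{C}})$.

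First, I would invoke Theorem \ref{T:Murre}(a) of Murre, which guarantees the existence of an algebraic representative $(\operatorname{Ab}^{2}(X_{\bar K}), \phi^{2}_{X_{\bar K}})$ for the group $\operatorname{A}^{2}(X_{\bar K})$ over the algebraically closed base. Next, for the complex fiber $X_{\mathbb{C}}$, Theorem \ref{T:Murre}(d) gives precisely the required statement: the pair $(J^{3}_a(X_{\mathbb{C}}), AJ)$, where $AJ$ is the Abel--Jacobi map and $J^3_a(X_{\mathbb{C}})$ is its image, is an algebraic representative for $\operatorname{A}^{2}(X_{\mathbb{C}})$. Both hypotheses of Theorem \ref{T:cycles3'} are therefore met when $n=1$.

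With these hypotheses in hand, the conclusion of Theorem \ref{T:cycles3'} yields a distinguished model $J$ over $K$ of $\operatorname{Ab}^{2}(X_{\bar K})$, together with an isomorphism $J_{\mathbb{C}} \cong J^{3}_a(X_{\mathbb{C}})$; this establishes the assertion about the descent of $J^3_a(X_{\mathbb{C}})$ to $K$. Moreover, Theorem \ref{T:cycles3'} furnishes a correspondence $\gamma$ on $\hat{J} \times_K X$ inducing, for every prime $\ell$, the claimed $\operatorname{Gal}(K)$-equivariant inclusion $\gamma_* : H^1(\hat{J}_{\bar K}, \mathbb{Q}_\ell) \hookrightarrow H^3(X_{\bar K}, \mathbb{Q}_\ell(1))$ with image equal to $\coniveau^1 H^3(X_{\bar K}, \mathbb{Q}_\ell(1))$.

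Finally, the geometric coniveau hypothesis $H^3(X_{\mathbb{C}}, \mathbb{Q})$ of coniveau $1$ corresponds, via the comparison isomorphisms \eqref{E:CompIsom}, to $\coniveau^1 H^3(X_{\bar K}, \mathbb{Q}_\ell(1)) = H^3(X_{\bar K}, \mathbb{Q}_\ell(1))$, so under this additional assumption \eqref{E:Ccycles3} is an isomorphism. Since both hypotheses of Theorem \ref{T:cycles3'} are free of charge from Murre's theorem, there is no real obstacle in this deduction; the substantive content has already been established in the proofs of Theorems \ref{T:Murre}, \ref{T:ACBC}, \ref{T:Desc}, and \ref{T:cycles3'}.
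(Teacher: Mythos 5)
Your proposal is correct and is essentially the paper's own proof: the paper deduces Corollary \ref{C:cycles3} in one line from Theorem \ref{T:cycles3'} together with Murre's results (Theorem \ref{T:Murre}), exactly as you do by checking the two algebraic-representative hypotheses via Theorem \ref{T:Murre}(a) and (d) for $n=1$. Your write-up simply makes the verification of those hypotheses and the final coniveau remark explicit.
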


\begin{proof}
 This follows from Theorem \ref{T:cycles3'} and \cite{murre83} (see Theorem
 \ref{T:Murre}).  That a uni-ruled threefold $X_\mathbb{C}$ has $H^3(X_{\mathbb C},\mathbb Q)$ of geometric coniveau $1$ follows, via a decomposition of the diagonal argument \cite{BlSr83}, from the fact that $\operatorname{CH}_0(X_\mathbb{C})$ is supported on a surface.
\end{proof}

\begin{rem}
 Unlike the approach taken to prove Theorem \ref{T:cycles'}, the approach to
 proving Theorem \ref{T:cycles3'} using Murre's results \cite{murre83} does not
 seem to provide a splitting for the inclusion \eqref{E:Ccycles3}. 
 However, since we are taking $2n+1=3$,  the
 arguments in the proof of Theorem \ref{T:cycles'}  do provide a splitting for
 \eqref{E:Ccycles3},  induced by an algebraic correspondence if $K$ is finitely generated.
 Indeed, let $J'$ be the abelian variety over $K$ and  $\gamma'$
 be the correspondence provided by Theorem
 \ref{T:cycles'}. The inclusion \eqref{E:Ccycles3} and the inclusion $\gamma_*' :
 H^1(J'_{\bar K},\rat_\ell) \hookrightarrow H^{3}(X_{\bar K},\rat_\ell(1))$ have the 
 same image, namely $\coniveau^1H^{3}(X_{\bar K},\rat_\ell(1))$. Therefore
 $H^1(J_{\bar K},\rat_\ell)$ and $H^1(J'_{\bar K},\rat_\ell)$ are isomorphic as
 $\operatorname{Gal}(K)$-representations.  The splitting of $\gamma'_*$ from Theorem \ref{T:cycles'} then provides a splitting of \eqref{E:Ccycles3}.
 If moreover $K$ is finitely generated, then $J$ and $J'$ are $K$-isogenous, in which case composing the the graph of such an isogeny with the correspondence from Theorem \ref{T:cycles'}
 that gives   the splitting of $\gamma_*'$  yields a correspondence splitting   \eqref{E:Ccycles3}.
\end{rem}

\begin{rem}\label{R:corr}
 Comparing Corollary \ref{C:cycles3} to Corollary \ref{C:Murre}, we see that by
 assuming that $K\subseteq \mathbb C$ and by using the functoriality of the Abel--Jacobi map with respect to the action of correspondences, we are able to obtain the inclusion of
 $\operatorname{Gal}(K)$-representations  via a correspondence, and to characterize its image in terms of the geometric coniveau filtration.
\end{rem}

\section{Complements on phantoms and algebraic representatives}\label{S:Complements}

In this section we discuss specialization of phantom abelian varieties and  algebraic
representatives.  This has particular relevance to the question of phantoms from the perspective of  Honda--Tate theory.  Motivated by Theorem
\ref{T:cycles3'}, we also establish some results on algebraic
representatives for higher codimension cycles.

\subsection{Phantoms, algebraic representatives, and specializations} \label{S:Specialization} 

\begin{lem}
\label{lem:goodred}
Let $S = \spec R$ be a discrete valuation ring with generic point $\eta = \spec
K$ and special point $0 = \spec \kappa$, and let $\ell$ be a 
prime number 
invertible in $\kappa$.   Let $ X/S$ be a smooth,
projective scheme, and let $ A/\eta$ be an abelian variety equipped with
a $\operatorname{Gal}(K)$-equivariant  inclusion
\begin{equation}
\label{eqphantomdiag}
\xymatrix{
H^1( A_{\bar \eta},\rat_\ell) \ar@{^(->}[r] &
H^{2n+1}(X_{\bar\eta},\rat_\ell(n)).
}
\end{equation}

\begin{alphabetize}

\item Then $ A$ extends to an abelian scheme $ A/S$.

\item Suppose the inclusion \eqref{eqphantomdiag} 
   has image $\coniveau^n
  H^{2n+1}(X_{\bar\eta},\rat_\ell)$ and   that $\dim \coniveau^n H^{2n+1}(X_{\bar
    0},\rat_\ell) = \dim \coniveau^n
  H^{2n+1}(X_{\bar\eta},\rat_\ell)$.  Then  specialization   induces an inclusion 
\begin{equation}\label{E:PhantD2}
    \xymatrix{
H^1( A_{\bar 0},\rat_\ell) \ar@{^(->}[r] &
H^{2n+1}(X_{\bar 0},\rat_\ell(n)),
}
 \end{equation}
with image $\coniveau^n H^{2n+1}(X_{\bar
    0},\rat_\ell)$.  Moreover, if  \eqref{eqphantomdiag} 
 is realized  by a correspondence  on
  $A \times_\eta  X_\eta$, then \eqref{E:PhantD2} is realized by the specialization of the correspondence on $A_0\times_0X_0$.

  \item If $ A_\eta$ is a phantom for $ X_\eta$ (i.e., \eqref{eqphantomdiag} is an isomorphism), then $
  A_0$ is a phantom for $ X_0$ (i.e., \eqref{E:PhantD2} is an isomorphism).

\end{alphabetize}
\end{lem}

\begin{proof} (a) 
Since $X_\eta$ has good reduction, it follows that $H^{2n+1}(X_{\bar\eta},\rat_\ell)$
is unramified as a representation of $\gal(K)$, and thus so is
$H^1(A_{\bar\eta},\rat_\ell)$.  The N\'eron--Ogg--Shafarevich criterion
shows $A$ extends to $S$.  

(b) This  follows from  the canonical
isomorphism $H^r(Y_{\bar\eta},\rat_\ell) \iso H^r(Y_{\bar
  0},\rat_\ell)$ provided by proper base change for any smooth
projective $Y/S$.  More precisely, 
 since specialization is compatible with
intersection \cite[20.3.5]{fulton},
 the specialization isomorphism
gives an inclusion $\coniveau^n
H^{2n+1}(X_{\bar\eta},\rat_\ell)\subseteq \coniveau^n H^{2n+1}(X_{\bar
  0},\rat_\ell)$.  
 Thus we have a commutative diagram
 $$
    \xymatrix@R=1em{
H^1( A_{\bar \eta },\rat_\ell) \ar@{->}[r]^<>(0.5){\sim} \ar@{->}[d]^\sim &\coniveau^n H^{2n+1}(X_{\bar \eta},\rat_\ell(n)) \ar@{^(->}[r]  \ar@{^(->}[d] &
H^{2n+1}(X_{\bar \eta},\rat_\ell(n)) \ar[d]^\sim\\
H^1( A_{\bar 0 },\rat_\ell) \ar@{-->}[r] &\coniveau^n H^{2n+1}(X_{\bar 0},\rat_\ell(n)) \ar@{^(->}[r]  &
H^{2n+1}(X_{\bar 0},\rat_\ell(n)).\\
}
 $$
 A dimension count implies that the induced dashed arrow is an isomorphism.  
 Finally, since  specialization is compatible with
cup product in cohomology,
the dashed arrow is in fact induced by the specialization of the correspondence inducing \eqref{eqphantomdiag}.

(c)  follows from (b).
\end{proof}

As an immediate consequence, we find\,:

\begin{cor}  With $X/S$ as in Lemma \ref{lem:goodred}, suppose further that $\operatorname{char} (K)=\operatorname{char}(\kappa)=0$ and that $\dim \coniveau^1
H^{3}(X_{\bar\eta},\rat_\ell)=  \dim \coniveau^1
H^{3}(X_{\bar 0},\rat_\ell)$.
 Then $\ubar\Ab^2(X_{\bar\eta})_0$  (Theorem \ref{T:Desc} and Remark \ref{R:DescAlgRep})  and  $\ubar \Ab^2(X_{\bar 0})$ are isogenous abelian varieties over $\kappa$.
\end{cor}

\begin{proof}
Use Lemma \ref{lem:goodred} and Corollary \ref{C:cycles3}.
\end{proof}

\begin{rem}
If one assumes further in the corollary that $\phi^2_{X_{\bar\eta}}$ and $\phi^2_{X_{\bar 0}}$ are isomorphisms (e.g., by \cite[Th. 1(i)]{BlSr83}, if $\operatorname{CH}_0(X_{\bar \eta})$ and $\operatorname{CH}_0(X_{\bar 0})$ are supported on curves after any base-change of algebraically closed field), then one may conclude in fact that $\ubar\Ab^2(X_{\bar\eta})_0$  and 
$\ubar \Ab^2(X_{\bar 0})$ are isomorphic. 
\end{rem}

In the setting of Mazur's original question (over a number field $K$), the following corollary  shows that under the geometric coniveau hypothesis 
 there is
an abelian variety over $K$ which interpolates the isogeny classes
described by Honda--Tate theory.

\begin{cor}
Let $S$ be a Dedekind scheme with generic point $\eta$.  Let $X \to S$
be a smooth projective morphism, and let $A_\eta/\eta$ be a phantom
abelian variety for $X_\eta$ in degree $2n+1$ (i.e., \eqref{eqphantomdiag} is an isomorphism).  Then $A_\eta$ extends to an
abelian scheme $A/S$ such that, for each point $s\in S$, $
A_s$ is a phantom abelian variety for $X_s$ in degree $2n+1$.
\end{cor}

\begin{proof}
Let $A$ be the N\'eron model of $A$ over $S$.  Lemma \ref{lem:goodred}
shows that every fiber is an abelian variety (and thus $ A$ is an
abelian scheme), and that each fiber is a
phantom for the corresponding fiber of $X$.
\end{proof}

However, like the N\'eron--Severi group, the rank of the   geometric
coniveau $n$ piece  in degree $2n+1$ can vary in families\,; this behavior rules out the
existence, in general, of a relative phantom abelian scheme modeling the coniveau $n$ piece in degree $2n+1$ for  families of projective varieties.  In particular, algebraic representatives need not specialize to algebraic representatives.   The following provides such an example\,:

\begin{exa}
Let $S=\operatorname{Spec} R$ be the spectrum of a discrete valuation ring as in Lemma \ref{lem:goodred}, and let $E$ be a CM field such that $[E:\rat] = 6$
and $[\widetilde E:\widetilde{E^{(+)}}] = 8$, where $\widetilde E$ and
$\widetilde{E^{(+)}}$ are the normal closures over $\rat$ of $E$ and its
maximal totally real subfield, respectively.  Let $X/S$ be an 
abelian threefold such that $\End(X_{\bar\eta}) \iso \integ$ and
$\End(X_0) \iso \End(X_{\bar 0})\supseteq \calo_E$.  On one hand, \cite[Thm.\
1]{tankeevghc} implies that $\dim \coniveau^1
H^3(X_{\bar\eta},\rat_\ell)=6$.  On the other hand, \cite[Thm.\
2]{tankeevghc} implies that if $\operatorname{char}(\kappa) = 0$, then
$\dim \coniveau^1 H^3(X_{\bar 0},\rat_\ell) = 18$, while if
$\operatorname{char}(\kappa) >0$ a further specialization argument guarantees that $\dim \coniveau^1 H^3(X_{\bar 0},\rat_\ell) \ge
18$.  Thus there is no  abelian scheme $J/S$ so that for each $s\in S$, there is an isomorphism $H^1(J_{\bar s},\mathbb Q_\ell) \cong \coniveau^1 H^3(X_{\bar s},\rat_\ell)$.  Moreover, if $\operatorname{char}(\kappa)=0$, this gives an example where  $\underline {\operatorname{Ab}}^2(X_{\bar \eta})_0$ is not isogenous to $\underline {\operatorname{Ab}}^2(X_{\bar 0})$. 
\end{exa}

\subsection{Algebraic representatives for higher codimension cycles} \label{S:comp}
In order to employ Theorem \ref{T:cycles3'}, one must first have an algebraic representative. 
However, outside of the cases of codimension-$1$,  codimension-$2$, and codimension-$d$ ($=\dim X$) cycles, there are not many results on their existence.   Saito's result  \cite[Thm.~2.2]{hsaito} (see Proposition \ref{P:saito}) provides a criterion for establishing the existence of algebraic representatives.  In this section, we use this to provide some further criteria for when algebraic representatives exist.  
To begin, Proposition \ref{P:referee} suggested by the referee,
asserts that if a  variety is dominated by a second variety that admits  an algebraic representative, then the first variety also admits an algebraic representative.   Next, Proposition \ref{P:BSalgrep} (see also Remark \ref{R:birat}) shows that the existence of algebraic representatives for codimension-$3$ cycles is a birational invariant.

\begin{pro}\label{P:referee}
 Let $K$ be a field, $n$ be a natural number, and let  $f : Y \to  X$ be a
 dominant morphism of smooth 
 projective varieties over $K$.  
 Assume there exists an algebraic
 representative for $\operatorname{A}^{n}(Y_{\bar K})$.
 Then there exists an algebraic representative for
 $\operatorname{A}^{n}(X_{\bar
  K})$.
\end{pro}

\begin{proof}
 Let $\iota : Z \hookrightarrow Y$ be a closed subvariety of  pure dimension
 $\dim X$
 cut out by hyperplanes. The morphism $f|_Z : Z \to X$ is then generically
 finite, of degree $r$, say. Assume there exists an algebraic
 representative $\operatorname{Ab}^{n}(Y_{\bar K})$ for
 $\operatorname{A}^{n}(Y_{\bar K})$.
 By the projection formula, the induced map 
 $$(f|_Z)_*(f|_Z)^* = (f|_Z)_*\iota^*f^* : \operatorname{A}^{n}(X_{\bar K}) \to
 \operatorname{A}^{n}(X_{\bar K})$$ is multiplication by $r$. This map is
 surjective because $\operatorname{A}^{n}(X_{\bar K})$ is a divisible group. It
 follows that the map $$(f|_Z)_*\iota^* :   \operatorname{A}^{n}(Y_{\bar K})
 \to
 \operatorname{A}^{n}(X_{\bar K})$$ is surjective. Consider then a surjective
 regular homomorphism $\operatorname{A}^{n}(X_{\bar K}) \to A(\bar K)$ to an
 abelian variety $A$ defined over $\bar K$. Then the composition 
 $$
 \begin{CD}
 \operatorname{A}^{n}(Y_{\bar K}) @>(f|_Z)_*\iota^*>>
 \operatorname{A}^{n}(X_{\bar
  K})@>>>
 A(\bar K)   
 \end{CD}
 $$
 is a surjective regular homomorphism. By the universal property of the
 algebraic representative $\operatorname{Ab}^{n}(Y_{\bar K})$ for
 $\operatorname{A}^{n}(Y_{\bar K})$, we find that $\dim A \leq \dim
 \operatorname{Ab}^{n}(Y_{\bar K})$. By \cite[Thm.~2.2]{hsaito} (see Proposition \ref{P:saito}), it follows that $\operatorname{A}^{n}(X_{\bar K})$
 admits an algebraic representative.
\end{proof}

\begin{pro}\label{P:BSalgrep} 
 Let $K\subseteq \Omega$ be a sub-field of a universal domain $\Omega$
 and let $X$ and $Y$ be smooth 
 projective varieties over $K$ with a correspondence $\Gamma \in
 \operatorname{CH}^d(Y\times X)$ such that
 $(\Gamma_\Omega)_* : \operatorname{CH}_0(Y_\Omega)_\rat \rightarrow
 \operatorname{CH}_0(X_\Omega)_\rat$ is surjective.
 Assume that $\operatorname{A}^3(Y_{\bar K}) $ admits an algebraic representative
 (e.g. if
 $\dim Y \leq 3$). Then
 $\operatorname{A}^3(X_{\bar K}) $ admits an algebraic
 representative.
\end{pro}
\begin{proof} The assumption that $\operatorname{CH}_0(X_\Omega)$ is spanned by
 $\operatorname{CH}_0(Y_\Omega)$, via the action of the correspondence $\Gamma$,
 implies by
 a decomposition of the diagonal argument \cite[Prop. 1]{BlSr83} (see also
 \cite[Prop. 3.5]{vialab} for the factorization assertion below) the existence of
 a
 positive integer $N$ such that
 $N \Delta_{ X} = {Z}_1 + {Z}_2$ in $\operatorname{CH}^d(X \times X)$,
 with $Z_1$ supported on  $D \times X$  for some divisor $D \subset
 X$, and $ Z_2$, when seen as a correspondence
 from $ X$ to $X$, factoring as $\Gamma \circ \Gamma'$ for some correspondence
 $\Gamma' \in \operatorname{CH}_d(X \times Y)$. Let us denote
 $\widetilde D$ an alteration of $D$ and  $\iota :
 \widetilde D \to D \to X$ the natural morphism.

 Consider a surjective regular
 homomorphism 
 $\operatorname{A}^3(X_{\bar K}) \twoheadrightarrow A(\bar K)$. We see from the divisibility of
 $\operatorname{A}^3(X_{\bar K})$ and from the decomposition of the diagonal that
 $$\operatorname{A}^3(X_{\bar K}) = (N\Delta_X)^* \operatorname{A}^3(X_{\bar K})
 = (\Gamma')^*\operatorname{A}^3(Y_{\bar K}) +
 \iota_*\operatorname{A}^2(\widetilde D_{\bar K}).$$ 
 It follows
 that the regular homomorphism 
 $$
 \begin{CD}
 \operatorname{A}^{3}(Y_{\bar K}) \oplus \operatorname{A}^2(\widetilde D_{\bar
  K}) @>(\Gamma')^*
 \oplus \iota_*>>
 \operatorname{A}^{3}(X_{\bar
  K})@>>>
 A(\bar K)   
 \end{CD}
 $$ is surjective. Since both $\operatorname{A}^{3}(Y_{\bar K})$ and
 $\operatorname{A}^2(\widetilde D_{\bar K})$ admit an algebraic representative
 (by
 assumption and by Murre's Theorem \ref{T:Murre}, respectively), we get by the
 criterion of \cite[Thm.~2.2]{hsaito} (see Proposition \ref{P:saito}) that there exists an integer
 $M$ independent of the surjective regular homomorphism
 $\operatorname{A}^3(X_{\bar K}) \twoheadrightarrow A(\bar K)$ such that $\dim A
 \leq M$. We conclude, by the same criterion, that $\operatorname{A}^3(X_{\bar
  K})$ has an algebraic representative.
\end{proof}

\begin{rem}\label{R:birat}
 The existence of an
 algebraic representative for codimension-$3$ algebraically trivial cycles is a
 birational invariant of
 smooth projective varieties over $\bar K$. Indeed, let $f :  Y \dashrightarrow  X$
 be a dominant map of smooth projective varieties over $\bar K$. Then $f_* :
 \operatorname{CH}_0(Y_\Omega) \rightarrow \operatorname{CH}_0(X_\Omega)$ is
 surjective, and it follows from Proposition \ref{P:BSalgrep}  that, if
 $\operatorname{A}^3(Y)$ has an algebraic representative, then
 $\operatorname{A}^3(X)$ has an algebraic representative. 
\end{rem}

\begin{rem}
 Proposition \ref{P:BSalgrep} can be generalized by repeated use of the
 decomposition of the diagonal argument. Specifically, one can show that if $X$
 is a smooth projective variety over $K$ such that
 $\operatorname{CH}_0(X_\Omega)_\rat,\ldots, \operatorname{CH}_l(X_\Omega)_\rat$ are
 spanned by the $\operatorname{CH}_0$ of threefolds 
 via the action of
 correspondences, then $\operatorname{A}^0(X_{\bar K}), \ldots,
 \operatorname{A}^{l+3}(X_\bar{K})$ admit  algebraic
 representatives. 
\end{rem}


\bibliographystyle{hamsalpha}
\bibliography{DCG}

\newcommand{\etalchar}[1]{$^{#1}$}
\def\cprime{$'$}
\providecommand{\bysame}{\leavevmode\hbox to3em{\hrulefill}\thinspace}
\providecommand{\MR}{\relax\ifhmode\unskip\space\fi MR }
\providecommand{\MRhref}[2]{%
  \href{http://www.ams.org/mathscinet-getitem?mr=#1}{#2}
}
\providecommand{\href}[2]{#2}
\begin{thebibliography}{FWG{\etalchar{+}}92}

\bibitem[ACMVa]{ACMVfunctor}
J.~Achter, S.~Casalaina-Martin, and Ch. Vial, \emph{Algebraic representatives
  over arbitrary fields}, in preparation.

\bibitem[ACMVb]{ACMVquadric}
\bysame, \emph{On descending cohomology geometrically : quadric fibrations}, in
  preparation.

\bibitem[ACMVc]{ACMVabtriv}
\bysame, \emph{Parameter spaces for algebraic equivalence}, preprint,
  arXiv:1610.06586, \mbox{arXiv:1610.06586}.

\bibitem[And04]{Andre}
Yves Andr{\'e}, \emph{Une introduction aux motifs (motifs purs, motifs mixtes,
  p\'eriodes)}, Panoramas et Synth\`eses [Panoramas and Syntheses], vol.~17,
  Soci\'et\'e Math\'ematique de France, Paris, 2004. \MR{2115000 (2005k:14041)}

\bibitem[Bea77]{beauville77}
Arnaud Beauville, \emph{Vari\'et\'es de {P}rym et jacobiennes
  interm\'ediaires}, Ann. Sci. \'Ecole Norm. Sup. (4) \textbf{10} (1977),
  no.~3, 309--391. \MR{0472843 (57 \#12532)}

\bibitem[Blo79]{bloch79}
S.~Bloch, \emph{Torsion algebraic cycles and a theorem of {R}oitman},
  Compositio Math. \textbf{39} (1979), no.~1, 107--127. \MR{539002 (80k:14012)}

\bibitem[Blo10]{blochbook}
\bysame, \emph{Lectures on algebraic cycles}, second ed., New Mathematical
  Monographs, vol.~16, Cambridge University Press, Cambridge, 2010. \MR{2723320
  (2011h:14009)}

\bibitem[BLR90]{BLR}
Siegfried Bosch, Werner L{\"u}tkebohmert, and Michel Raynaud, \emph{N\'eron
  models}, Ergebnisse der Mathematik und ihrer Grenzgebiete (3) [Results in
  Mathematics and Related Areas (3)], vol.~21, Springer-Verlag, Berlin, 1990.
  \MR{1045822 (91i:14034)}

\bibitem[BS83]{BlSr83}
S.~Bloch and V.~Srinivas, \emph{Remarks on correspondences and algebraic
  cycles}, Amer. J. Math. \textbf{105} (1983), no.~5, 1235--1253. \MR{714776
  (85i:14002)}

\bibitem[Con06]{conradtrace}
Brian Conrad, \emph{Chow's {$K/k$}-image and {$K/k$}-trace, and the
  {L}ang-{N}\'eron theorem}, Enseign. Math. (2) \textbf{52} (2006), no.~1-2,
  37--108. \MR{2255529 (2007e:14068)}

\bibitem[Del72]{deligneniveau}
Pierre Deligne, \emph{Les intersections compl\`etes de niveau de {H}odge un},
  Invent. Math. \textbf{15} (1972), 237--250. \MR{MR0301029 (46 \#189)}

\bibitem[Del80]{deligneweil2}
\bysame, \emph{La conjecture de {W}eil. {II}}, Inst. Hautes \'Etudes Sci. Publ.
  Math. (1980), no.~52, 137--252. \MR{601520 (83c:14017)}

\bibitem[Del82]{dmos}
\bysame, \emph{Hodge cycles on abelian varieties (notes by {J}.\ {M}ilne)},
  Hodge cycles, motives, and {S}himura varieties (Pierre Deligne, James~S.
  Milne, Arthur Ogus, and Kuang-yen Shih, eds.), Lecture Notes in Mathematics,
  vol. 900, Springer-Verlag, Berlin-New York, 1982, pp.~ii+414. \MR{654325
  (84m:14046)}

\bibitem[Fal83]{faltings83}
G.~Faltings, \emph{Endlichkeitss\"atze f\"ur abelsche {V}ariet\"aten \"uber
  {Z}ahlk\"orpern}, Invent. Math. \textbf{73} (1983), no.~3, 349--366.
  \MR{718935 (85g:11026a)}

\bibitem[Fri91]{friedlander91}
Eric~M. Friedlander, \emph{Algebraic cycles, {C}how varieties, and {L}awson
  homology}, Compositio Math. \textbf{77} (1991), no.~1, 55--93. \MR{1091892
  (92a:14005)}

\bibitem[Ful98]{fulton}
William Fulton, \emph{Intersection theory}, second ed., Ergebnisse der
  Mathematik und ihrer Grenzgebiete. 3. Folge. A Series of Modern Surveys in
  Mathematics [Results in Mathematics and Related Areas. 3rd Series. A Series
  of Modern Surveys in Mathematics], vol.~2, Springer-Verlag, Berlin, 1998.
  \MR{1644323 (99d:14003)}

\bibitem[FWG{\etalchar{+}}92]{faltingswustholz}
Gerd Faltings, Gisbert W{\"u}stholz, Fritz Grunewald, Norbert Schappacher, and
  Ulrich Stuhler, \emph{Rational points}, third ed., Aspects of Mathematics,
  E6, Friedr. Vieweg \& Sohn, Braunschweig, 1992, Papers from the seminar held
  at the Max-Planck-Institut f{\"u}r Mathematik, Bonn/Wuppertal, 1983/1984,
  With an appendix by W{\"u}stholz. \MR{1175627 (93k:11060)}

\bibitem[Gab83]{gabbertorsion}
Ofer Gabber, \emph{Sur la torsion dans la cohomologie {$l$}-adique d'une
  vari\'et\'e}, C. R. Acad. Sci. Paris S\'er. I Math. \textbf{297} (1983),
  no.~3, 179--182. \MR{725400 (85f:14018)}

\bibitem[Gro68]{grothBrauerIII}
Alexander Grothendieck, \emph{Le groupe de {B}rauer. {III}. {E}xemples et
  compl\'ements}, Dix {E}xpos\'es sur la {C}ohomologie des {S}ch\'emas,
  North-Holland, Amsterdam; Masson, Paris, 1968, pp.~88--188. \MR{0244271 (39
  \#5586c)}

\bibitem[GW10]{GW10}
Ulrich G{\"o}rtz and Torsten Wedhorn, \emph{Algebraic geometry {I}}, Advanced
  Lectures in Mathematics, Vieweg + Teubner, Wiesbaden, 2010, Schemes with
  examples and exercises. \MR{2675155 (2011f:14001)}

\bibitem[Hon68]{honda68}
Taira Honda, \emph{Isogeny classes of abelian varieties over finite fields}, J.
  Math. Soc. Japan \textbf{20} (1968), 83--95. \MR{0229642 (37 \#5216)}

\bibitem[Ill06]{illusiemiscellany}
Luc Illusie, \emph{Miscellany on traces in {$\ell$}-adic cohomology: a survey},
  Jpn. J. Math. \textbf{1} (2006), no.~1, 107--136. \MR{2261063 (2007g:14016)}

\bibitem[Jan94]{jannsenseattle}
Uwe Jannsen, \emph{Motivic sheaves and filtrations on {C}how groups}, Motives
  ({S}eattle, {WA}, 1991), Proc. Sympos. Pure Math., vol.~55, Amer. Math. Soc.,
  Providence, RI, 1994, pp.~245--302. \MR{1265533 (95c:14006)}

\bibitem[Kle68]{kleiman}
S.~L. Kleiman, \emph{Algebraic cycles and the {W}eil conjectures}, Dix
  expos\'es sur la cohomologie des sch\'emas, North-Holland, Amsterdam; Masson,
  Paris, 1968, pp.~359--386. \MR{0292838 (45 \#1920)}

\bibitem[Kol96]{K96}
J{\'a}nos Koll{\'a}r, \emph{Rational curves on algebraic varieties}, Ergebnisse
  der Mathematik und ihrer Grenzgebiete. 3. Folge. A Series of Modern Surveys
  in Mathematics [Results in Mathematics and Related Areas. 3rd Series. A
  Series of Modern Surveys in Mathematics], vol.~32, Springer-Verlag, Berlin,
  1996. \MR{1440180 (98c:14001)}

\bibitem[Lec86]{lecomte86}
Florence Lecomte, \emph{Rigidit\'e des groupes de {C}how}, Duke Math. J.
  \textbf{53} (1986), no.~2, 405--426. \MR{850543 (88c:14013)}

\bibitem[Maz72]{mazurfrobhodge}
B.~Mazur, \emph{Frobenius and the {H}odge filtration}, Bull. Amer. Math. Soc.
  \textbf{78} (1972), 653--667. \MR{0330169 (48 \#8507)}

\bibitem[Maz11]{mazurprob}
\bysame, \emph{For the ``open problem session'' in the conference in honor of
  {J}oe {H}arris}, unpublished note, 2011.

\bibitem[Maz14]{mazurprobICCM}
\bysame, \emph{Open problems: {D}escending cohomology, geometrically}, Notices
  of the International Congress of Chinese Mathematicians \textbf{2} (2014),
  no.~1, 37 -- 40.

\bibitem[Mil94]{milnemotivesff}
J.~S. Milne, \emph{Motives over finite fields}, Motives ({S}eattle, {WA},
  1991), Proc. Sympos. Pure Math., vol.~55, Amer. Math. Soc., Providence, RI,
  1994, pp.~401--459. \MR{1265538 (95g:11053)}

\bibitem[Mil14]{milneAG}
\bysame, \emph{Algebraic geometry (v6.00)}, 2014, Available at
  www.jmilne.org/math/, p.~223.

\bibitem[Moc12]{mochizuki12}
Shinichi Mochizuki, \emph{Topics in absolute anabelian geometry {I}:
  generalities}, J. Math. Sci. Univ. Tokyo \textbf{19} (2012), no.~2, 139--242.
  \MR{2987306}

\bibitem[Mur85]{murre83}
J.~P. Murre, \emph{Applications of algebraic {$K$}-theory to the theory of
  algebraic cycles}, Algebraic geometry, {S}itges ({B}arcelona), 1983, Lecture
  Notes in Math., vol. 1124, Springer, Berlin, 1985, pp.~216--261. \MR{805336
  (87a:14006)}

\bibitem[Poo04]{poonen}
Bjorn Poonen, \emph{Bertini theorems over finite fields}, Ann. of Math. (2)
  \textbf{160} (2004), no.~3, 1099--1127. \MR{2144974 (2006a:14035)}

\bibitem[Sai79]{hsaito}
Hiroshi Saito, \emph{Abelian varieties attached to cycles of intermediate
  dimension}, Nagoya Math. J. \textbf{75} (1979), 95--119. \MR{542191
  (81a:14021)}

\bibitem[Sam60]{samuelICM}
Pierre Samuel, \emph{Relations d'\'equivalence en g\'eom\'etrie alg\'ebrique},
  Proc. {I}nternat. {C}ongress {M}ath. 1958, Cambridge Univ. Press, New York,
  1960, pp.~470--487. \MR{0116010}

\bibitem[Tan93]{tankeevghc}
S.~G. Tankeev, \emph{Abelian varieties and the general {H}odge conjecture},
  Izv. Ross. Akad. Nauk Ser. Mat. \textbf{57} (1993), no.~4, 192--206.
  \MR{1243359 (95j:14011)}

\bibitem[Tat66]{tate66}
John Tate, \emph{Endomorphisms of abelian varieties over finite fields},
  Invent. Math. \textbf{2} (1966), 134--144. \MR{0206004 (34 \#5829)}

\bibitem[Tat71]{tate71}
\bysame, \emph{Classes d'isog\'enie des vari\'et\'es ab\'eliennes sur un corps
  fini (d'apr\`es {T}.~{H}onda)}, S\'eminaire {B}ourbaki. {V}ol. 1968/69:
  {E}xpos\'es 347--363, Lecture Notes in Math., vol. 175, Springer, Berlin,
  1971, pp.~Exp.\ No.\ 352, 95--110. \MR{3077121}

\bibitem[Via]{vialab}
Charles Vial, \emph{Remarks on motives of abelian type}, Tohoku Math. J., in
  press.

\bibitem[Via13]{Vial2}
\bysame, \emph{Niveau and coniveau filtrations on cohomology groups and {C}how
  groups}, Proc. Lond. Math. Soc. (3) \textbf{106} (2013), no.~2, 410--444.
  \MR{3021467}

\bibitem[Voi04]{voisinfilt}
Claire Voisin, \emph{Remarks on filtrations on {C}how groups and the {B}loch
  conjecture}, Ann. Mat. Pura Appl. (4) \textbf{183} (2004), no.~3, 421--438.
  \MR{2082666}

\bibitem[Voi07]{voisinI}
\bysame, \emph{Hodge theory and complex algebraic geometry. {I}}, english ed.,
  Cambridge Studies in Advanced Mathematics, vol.~76, Cambridge University
  Press, Cambridge, 2007, Translated from the French by Leila Schneps.
  \MR{2451566 (2009j:32014)}

\bibitem[Voi13]{voisinAJ13}
\bysame, \emph{Abel-{J}acobi map, integral {H}odge classes and decomposition of
  the diagonal}, J. Algebraic Geom. \textbf{22} (2013), no.~1, 141--174.
  \MR{2993050}

\bibitem[Voi14]{voisinDiag}
\bysame, \emph{Chow rings, decomposition of the diagonal, and the topology of
  families}, Annals of Mathematics Studies, vol. 187, Princeton University
  Press, Princeton, NJ, 2014. \MR{3186044}

\bibitem[Voi15]{voisinUniv}
\bysame, \emph{Unirational threefolds with no universal codimension {$2$}
  cycle}, Invent. Math. \textbf{201} (2015), no.~1, 207--237. \MR{3359052}

\bibitem[Vol05]{volkov05}
Maja Volkov, \emph{A class of {$p$}-adic {G}alois representations arising from
  abelian varieties over {$\mathbb{Q}_p$}}, Math. Ann. \textbf{331} (2005),
  no.~4, 889--923. \MR{2148801 (2006a:14027)}

\bibitem[Wei54]{weil54}
Andr{\'e} Weil, \emph{Sur les crit\`eres d'\'equivalence en g\'eom\'etrie
  alg\'ebrique}, Math. Ann. \textbf{128} (1954), 95--127. \MR{0065219
  (16,398e)}

\bibitem[Zar75]{zarhin75}
Ju.~G. Zarhin, \emph{Endomorphisms of {A}belian varieties over fields of finite
  characteristic}, Izv. Akad. Nauk SSSR Ser. Mat. \textbf{39} (1975), no.~2,
  272--277, 471. \MR{0371897 (51 \#8114)}

\bibitem[Zar08]{zarhin08}
Yuri~G. Zarhin, \emph{Homomorphisms of abelian varieties over finite fields},
  Higher-dimensional geometry over finite fields, NATO Sci. Peace Secur. Ser. D
  Inf. Commun. Secur., vol.~16, IOS, Amsterdam, 2008, pp.~315--343. \MR{2484084
  (2010a:11117)}

\end{thebibliography}

\end{document}